\newfont{\sdbl}{msbm9}
\newfont{\dbl}{msbm10 at 12pt}
\theoremstyle{definition}
\newcommand{\da}{{\mbox{\dbl A}}}
\newcommand{\dz}{{\mbox{\dbl Z}}}
\newcommand{\dn}{{\mbox{\dbl N}}}
\newcommand{\sdn}{{\mbox{\sdbl N}}}
\newcommand{\dc}{{\mbox{\dbl C}}}
\newcommand{\ord}{\mathop{\rm ord}\nolimits}
\newcommand{\End}{\mathop {\rm End}}
\newcommand{\Sup}{\mathop {\rm Supp}}
\newcommand{\rk}{\mathop {\rm rk}}
\newcommand{\Projj}{\mathop {\rm Proj}}
\newcommand{\idm}{\mathfrak{m}}
\newcommand{\Ord}{\mathop {\rm \bf ord}}
\newtheorem{Def}{Definition}[section]
\newtheorem{rem}{Remark}[section]
\newtheorem{ex}{Example}[section]
\theoremstyle{plain}
\newtheorem{Prop}{Proposition}[section]
\newtheorem{theorem}{Theorem}[section]
\newtheorem{lemma}{Lemma}[section]
\newtheorem{cor}{Corollary}[section]
\numberwithin{equation}{section}
\newcommand{\co}{{{\cal O}}}
\newcommand{\crr}{{{\cal R}}}
\newcommand{\cf}{{{\cal F}}}
\newcommand{\cm}{{{\cal M}}}
\begin{document} 
\title{Normal forms for ordinary differential operators, I}
\author{J. Guo \and  A.B. Zheglov}
\date{}
\maketitle


\begin{abstract}
In this paper we develop the generalised Schur theory offered in the recent paper by the second author in  dimension one case, and apply it to obtain  a new explicit parametrisation of torsion free rank one sheaves on projective irreducible curves with vanishing cohomology groups.

This parametrisation is obtained with the help of normal forms - a notion we introduce in this paper. Namely, considering the ring of ordinary differential operators $D_1=K[[x]][\partial ]$ as a subring of a certain complete non-commutative ring $\hat{D}_1^{sym}$, the normal forms of differential operators mentioned here are obtained after conjugation by some invertible operator ("Schur operator"), calculated using one of the operators in a ring. Normal forms of commuting operators are polynomials with constant coefficients in the differentiation, integration and shift operators, which have a restricted finite order in each variable, and can be effectively calculated for any given commuting operators. 

\end{abstract}


\markright{Normal forms for ODOs}

\tableofcontents

\section{Introduction}
\label{S:introduction}

In this paper we develop the generalised Schur theory offered in \cite{A.Z} in  dimension one case and apply it to obtain  a new convenient explicit parametrisation of torsion free rank one sheaves on projective irreducible curves with vanishing cohomology groups. 

It is well known  that such sheaves are exactly the spectral sheaves of commutative subrings of ordinary differential operators of rank one (see e.g. review \cite{Zheglov_book} for detailed exposition of the theory of commuting ordinary differential operators or \cite{BZ} for a short explanation). This result is the first step in establishing similar parametrisation  for spectral sheaves of arbitrary rank, and also for spectral sheaves of commutative subrings of operators in higher dimensions (cf. \cite{BurbanZheglov2017}, \cite{Zheglov_belovezha}). It is motivated by an important problem that appears in algebraic-geometric classification  of commutative subrings of operators in higher dimensions -- a description of torsion free sheaves with specific conditions on cohomology groups (see \cite{Zheglov_belovezha}), in particular with fixed Hilbert polynomial and some extra conditions. 
In the work \cite{BurbanZheglov2017} a description of Cohen-Macaulay sheaves on the spectral surface of quantum Calogero-Moser systems was given  with the help of matrix problem approach due to I. Burban and Y. Drozd \cite{SurvOnCM}, \cite{BDNonIsol} (Cohen-Macaulay sheaves form an open part of the moduli space of torsion free sheaves with fixed Hilbert polynomial, and by that reason it is important  to describe them at first), however this approach meets with a difficulty to describe sheaves with specific cohomological properties (cf. \cite[Sec. 6]{BurbanZheglov2017}). We expect that our new approach will help to solve this problem in an effective way in any dimension.

The necessity of further development of the Schur theory was not restricted only by the above mentioned application. Let's recall two major theorems of this theory from \cite{A.Z} (we formulate them here in a simplified form, for notations see the List of notations below):
\begin{theorem}[A generalized Schur theorem, Th. 7.1 ]
\label{T:Schur_theorem}
Let $P_1, \ldots , P_n\in \hat{D}_n^{sym}$ be commuting operators with $\Ord (P_i)=k$ for all $i=1,\ldots ,n$.  Assume that the module $F$ of the ring $K[\sigma (P_1), \ldots , \sigma (P_n)]$ is finitely generated and free. 

Then there exists an invertible operator $S\in \hat{D}_n^{sym}$ with $\Ord (S)=0$ such that
$$
S^{-1}\partial_i^{k}S=P_i, \quad i=1, \ldots ,n. 
$$
\end{theorem}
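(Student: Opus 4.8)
The plan is to construct the Schur operator $S$ by successive approximation, exploiting the filtration of $\hat D_n^{sym}$ by order. First I would set up the recursion: writing $S = 1 + S_1 + S_2 + \ldots$ where $S_j$ has $\Ord(S_j) = -j$ (or the appropriate grading shift), I would try to solve the system $S\,P_i = \partial_i^{k}\,S$ for $i = 1,\ldots,n$ order by order. At the leading order, the hypothesis $\Ord(P_i) = k$ together with the assumption that $F$, the module over $K[\sigma(P_1),\ldots,\sigma(P_n)]$, is free and finitely generated should force $\sigma(P_i)$ to agree — after the freeness normalization — with $\sigma(\partial_i^{k}) = \xi_i^{k}$; this is what makes the zeroth-order term of $S$ equal to $1$ (up to a unit) and is where the module hypothesis is really used.

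Next I would carry out the inductive step. Assume $S^{(N)} = 1 + S_1 + \cdots + S_N$ has been found so that $S^{(N)} P_i - \partial_i^{k} S^{(N)}$ has order $\le k - N - 1$ for every $i$; I want $S_{N+1}$ with $\Ord(S_{N+1}) = -(N+1)$ correcting this. Plugging in and collecting terms of the relevant order yields, for each $i$, an equation of the shape
\[
[\partial_i^{k},\, S_{N+1}] = R_i^{(N)},
\]
where $R_i^{(N)}$ is the known obstruction built from $S_1,\ldots,S_N$ and $P_i$. The operator $T \mapsto ([\partial_1^{k},T],\ldots,[\partial_n^{k},T])$ is, at the level of symbols, multiplication by $(\xi_1^{k},\ldots,\xi_n^{k})$ up to lower-order corrections, so I would invert it degree by degree; solvability requires a compatibility (cocycle) condition among the $R_i^{(N)}$, which follows from the commutativity $[P_i,P_j]=0$ and $[\partial_i^{k},\partial_j^{k}]=0$ via a standard Jacobi-identity argument applied to $S^{(N)}P_iP_j - \partial_i^{k}\partial_j^{k}S^{(N)}$ computed two ways. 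I would check that the correction stays inside $\hat D_n^{sym}$ and preserves $\Ord(S)=0$ in the limit, then argue that the formal series $S = \limproj S^{(N)}$ converges in the complete ring $\hat D_n^{sym}$ and is invertible because its leading term is $1$.

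The main obstacle I expect is twofold. First, the "division" step — solving $[\partial_i^{k}, S_{N+1}] = R_i^{(N)}$ simultaneously for all $i$ — is not a single commutator equation but a system, and one must verify that the compatibility conditions coming from commutativity are not only necessary but sufficient to produce a genuine solution $S_{N+1} \in \hat D_n^{sym}$ with the correct order; this is exactly the place where the freeness of the module $F$ (as opposed to mere finite generation) should be invoked to guarantee that the relevant symbol map is surjective onto the space where $R_i^{(N)}$ lives. Second, there is a bookkeeping subtlety in that $\hat D_n^{sym}$ is non-commutative and the order function $\Ord$ is a multi-index / refined valuation rather than a single integer, so "collecting terms of order $k-N-1$" must be done with respect to the correct grading; getting the induction hypothesis and the shape of $R_i^{(N)}$ precisely right is where most of the care goes. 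Convergence and invertibility in the completed ring are then routine given the setup.
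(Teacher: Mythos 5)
The theorem you are proving is quoted in this paper from \cite{A.Z} (Th.~7.1) and is not reproved here, so I compare your sketch with the method the paper does use for its one-dimensional relatives (Proposition \ref{P:normalized_Schur}, which follows the proof of Prop.~7.2 of \cite{A.Z}). Your sketch has a genuine gap already at the leading order. The hypotheses do not force $\sigma(P_i)=\partial_i^k$: for $n=1$ the operator $P=\partial^2+x\partial^3$ has $\Ord(P)=2$ and $F=K[\partial]$ is free of rank two over $K[\sigma(P)]$ (since $\partial^j\circ\sigma(P)=(j+1)\partial^{j+2}$), yet $\sigma(P)\neq\partial^2$. Comparing the order-$k$ homogeneous components of $SP_i=\partial_i^kS$ gives $S_0\,\sigma(P_i)=\partial_i^k\,S_0$, so in general the leading term $S_0$ is \emph{not} $1$; constructing this order-zero intertwiner is exactly where the freeness of $F$ (a choice of basis over $K[\sigma(P_1),\ldots,\sigma(P_n)]$) is really used, and your proposal skips it with the incorrect remark that freeness lets one normalize the symbols to agree (also, $\sigma(\partial_i^k)$ is the operator $\partial_i^k$, not a scalar $\xi_i^k$).

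Because of this, the inductive equations are not commutator equations: the order-$(k-N-1)$ component of $SP_i-\partial_i^kS=0$ reads $\partial_i^k S_{-N-1}-S_{-N-1}\,\sigma(P_i)=R_i^{(N)}$ (a twisted, Sylvester-type equation, further twisted by $S_0$), and only in the monic case $\sigma(P_i)=\partial_i^k$ does it become $[\partial_i^k,S_{-N-1}]=R_i^{(N)}$. Even then the map $T\mapsto[\partial_i^k,T]$ is neither injective (its kernel is the centralizer of Theorem \ref{T:centraliser_theorem}) nor surjective (cf.\ Remark \ref{R:ODE sol}: $[\partial^4,H]=B_{10}\partial^4$ has no solution of the relevant form), so ``inverting it degree by degree'' and the claim that it acts on symbols as multiplication by $(\xi_1^{k},\ldots,\xi_n^{k})$ are unjustified; you note the compatibility conditions coming from $[P_i,P_j]=0$ but do not show they are sufficient. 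The proof in \cite{A.Z}, reflected in the paper's Proposition \ref{P:normalized_Schur}, avoids all of this by decomposing $S$ into slices $S_{[q]}=\frac{x^q}{q!}S_{(q)}$ in powers of $x$ and solving $(\partial_i^kS)_{[p]}=(SP_i)_{[p]}$ slice by slice: the unknown slice enters only through $\partial_i^k(S_{[p+k]})$ and is determined directly from the already known slices, with no commutator to invert. Repairing your argument would essentially mean either switching to that decomposition or genuinely proving solvability of the twisted system at every order, with the freeness of $F$ doing the work in both the zeroth-order step and the recursion.
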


If $n=1$ then the conditions of the generalized Schur theorem are automatically satisfied for any {\it monic} operator $P\in \hat{D}_1^{sym}$. So, $P=S\partial^q S^{-1}$ for some $S\in \hat{D}_1^{sym}$.

\begin{theorem}[A centralizer theorem, Prop.7.1 ]
\label{T:centraliser_theorem}
Assume $[\partial_q^{k}, Q]=0$ for $q=1, \ldots , n$, where $Q\in \hat{D}_n^{sym}$. Then 
$$
Q=\sum_{j_{1}=0}^{k-1}\ldots \sum_{j_{n}=0}^{k-1}c_{j_1,\ldots ,j_n}A_{k; j_1,1}\ldots A_{k; j_n,n},
$$
where all coefficients  $c_{j_1,\ldots ,j_n}$ are {\it polynomials} in $\partial_q$, $\int_q$, $q=1,\ldots , n$ with {\it constant} coefficients  and the degree of these polynomials with respect to $\partial_q$ is not greater than $\Ord (Q)$ and the degree of these polynomials with respect to $\int_q$ is not greater than $k-1$.
\end{theorem}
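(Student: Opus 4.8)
\emph{Proof plan.} The plan is to argue through the explicit $k$-normal form of elements of $\hat{D}_n^{sym}$ (set up in the foundational part of the paper) together with a direct analysis of the adjoint action of $\partial_q^k$. Write $Q$ in its $k$-normal form
$$Q=\sum_{j_1,\dots,j_n=0}^{k-1} c_{j_1,\dots,j_n}\,A_{k;j_1,1}\cdots A_{k;j_n,n},$$
with each $c_{j_1,\dots,j_n}$ an a priori infinite series in the $\partial_q,\int_q$ with constant coefficients; the whole content of the theorem is that the hypothesis $[\partial_q^k,Q]=0$ for all $q$ forces every $c_{j_1,\dots,j_n}$ to be a polynomial of $\partial_q$-degree $\le\Ord(Q)$ and $\int_q$-degree $\le k-1$.

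First I would reduce to $n=1$. Since operators in different variables commute in $\hat{D}_n^{sym}$, the operator $\partial_q^k$ commutes with $\partial_{q'},\int_{q'}$ and with every $A_{k;j,q'}$ for $q'\ne q$; applying the derivation property of the commutator to the normal form above and using uniqueness of the $k$-normal form in the variables $\ne q$, the relation $[\partial_q^k,Q]=0$ becomes, for each choice of the remaining indices, a one-variable identity $[\partial^k,R]=0$ with $R=\sum_{j=0}^{k-1}c_jA_{k;j}$, linear over the ``other-variable'' coefficients. So it is enough to prove the statement in one variable and reassemble.

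The core is then the one-variable computation. Using that $\partial^k$ commutes with $\partial$ together with the explicit commutation relations between $\partial^k$ and $\int$, and between $\partial^k$ and the $A_{k;j}$ (recorded when $\hat{D}_1^{sym}$ and these operators were introduced), I would compute $[\partial^k,R]$ and put it back into $k$-normal form. The point to extract is that the map $X\mapsto[\partial^k,X]$ is \emph{triangular} for the bigrading by the index $j$ and by the filtration $\Ord$: it strictly lowers these data, annihilates $A_{k;0}$ and the purely-$\partial$ parts of the coefficients, and is otherwise injective modulo strictly lower-order terms. Equating $[\partial^k,R]$ to zero and comparing normal-form coefficients then yields a triangular recursive system for the $c_j$; solving it from the top stratum down, each equation pins down or kills one further coefficient in terms of those already found, and a comparison of $\Ord$ on the two sides shows that no coefficient can carry an infinite tail, so each $c_j$ is a polynomial. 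I expect this to be the main obstacle: making the commutator computation explicit enough that the triangularity is visible, and — the harder half — showing that the recursion genuinely \emph{terminates}, i.e.\ that the centralizer of $\partial^k$ is no larger than claimed and does not merely contain the claimed operators.

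It remains to pin down the degree bounds. The bound $\le k-1$ on the $\int$-degree of each $c_j$ should fall out of the same recursion, or already from the defining property of the $k$-normal form (any power of $\int$ beyond $k-1$ is reabsorbed, modulo lower terms, into the $A_{k;j}$ and powers of $\partial^k$); the bound $\Ord(Q)$ on the $\partial$-degree follows from the filtration axioms for $\Ord$ — for which $\Ord(\partial_q)=1$ while $\Ord(\int_q),\Ord(A_{k;j,q})\le 0$ — together with the absence of cancellation in the normal form, since $\partial$-leading terms of distinct $c_{\vec j}$ attach to distinct products of the $A$'s, so the top $\partial$-degree occurring is exactly $\Ord(Q)$. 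Finally, the polynomial solutions so obtained are finite sums and hence genuinely lie in $\hat{D}_n^{sym}$, so convergence is not an issue; reassembling the one-variable slices gives the statement for general $n$.
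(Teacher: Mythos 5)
Your plan begs the central question. You start by ``writing $Q$ in its $k$-normal form'' $Q=\sum_{\vec j} c_{j_1,\dots,j_n}A_{k;j_1,1}\cdots A_{k;j_n,n}$ with the $c_{\vec j}$ constant-coefficient series in $\partial_q,\int_q$, and declare that the whole content of the theorem is the polynomiality and degree bounds of the $c_{\vec j}$. But no such normal form exists for a general element of $\hat{D}_n^{sym}$: a homogeneous component of order $-m$ of an expression of your assumed shape lies in a space of dimension $k^n$ over constants (spanned by the products of $A$'s times $D^{-m}$), whereas a general operator in $\hat{D}_1^{sym}$ already has infinite-dimensional spaces of homogeneous components (e.g.\ $x^m, x^{m+1}\partial, x^{m+2}\partial^2,\dots$ are independent). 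The real content of the theorem is precisely that the relations $[\partial_q^k,Q]=0$ force the $x$-dependence of $Q$ to organize into the exponential operators $A_{k;j}$ with constant coefficients; your proposal assumes this and only addresses the (comparatively easy) bookkeeping afterwards. The paper's proof of the one-variable case obtains the shape of $Q$ by replacing $\partial$ on the right by a commuting formal variable $\tilde\partial$, so that $[\partial^k,Q]=0$ becomes the constant-coefficient linear ODE $\prod_{i=1}^k\bigl(\partial+(1-\xi^i)\tilde\partial\bigr)(Q')=0$ in $\tilde K[[x]]((\tilde\partial))$, whose solution space is spanned over $\tilde K((\tilde\partial))$ by the exponentials $\exp((\xi^i-1)x\tilde\partial)$, i.e.\ by the $A_{k;i}$; this step has no counterpart in your outline, and the ``triangular recursion'' you describe operates only on operators already assumed to be of the special form.

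Two further points would also need repair even granting the form. Your proposed source of the bound $\le k-1$ on the $\int$-degree --- that ``any power of $\int$ beyond $k-1$ is reabsorbed, modulo lower terms, into the $A_{k;j}$ and powers of $\partial^k$'' --- is not true: there is no such identity, and indeed $[\partial^k,\int^k]=\sum_{j=1}^kB_j\neq 0$, so $\int^k$ does not even commute with $\partial^k$. In the paper this bound is a separate lemma, proved by showing that if the lowest homogeneous component of $Q$ had order $\le -k$ then its commutator with $\partial^k$ could not vanish (an explicit computation with the canonical form $\sum a_{m+p}x^{m+p}\partial^p$). The bound $\Ord(Q)$ on the $\partial$-degree, on the other hand, is obtained essentially as you suggest, from the uniqueness of the homogeneous decomposition together with the linear independence of $1,A_{k;1},\dots,A_{k;k-1}$ over constants (the cited Lemma 7.2 of the Schur--Sato paper), so that part of your sketch is consistent with the paper's argument.
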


If the operators $P_i$ from the first theorem are {\it differential}, i.e. $P_i\in D_n$, a natural question appears: what is the shape of the operator $S$? In  dimension one case it is well known (see e.g. \cite{Mu}, \cite{Li_Mulase}, \cite{SW} or the book  \cite{Zheglov_book} and references therein) that, if the centraliser of $P_1$ is non-trivial, then  $S$ can be expressed via the Baker-Akhieser function (and vice versa), though in quite non-trivial way, if the rank of the centraliser is greater than one. Starting from works \cite{Kr1}, \cite{Kr2}, \cite{KN} it is known that  the Baker-Akhieser function plays an important role in many problems of mathematical physics, in particular it played a key role in constructing explicit examples of commuting operators in many works (cf. \cite{Mokh}, \cite{M1} for examples in dimension one, and \cite{ChalykhVeselov90}, \cite{ChalykhVeselov98}, \cite{ChalykhFeiginVeselov2} for examples in higher dimensions). Analogously, in higher dimension the  Schur-Sato operator $S$ determines the common eigenfunction of commuting operators (of different nature), see \cite[Sec. 6]{BurbanZheglov2017}, \cite{Zheglov_belovezha}, cf. also \cite{BerestChalykh22} and references therein, and the knowledge of its shape could help to prove the classification conjecture \cite[Conj. 7.11]{Zheglov_belovezha} about commuting partial differential operators. Besides, in higher dimension, the operator $S$ determines an order-preserving endomorphism of the Weyl algebra (\cite[Cor. 2.1]{A.Z}), thus giving hint to the Dixmier and Jacobian conjectures\footnote{ Occasionally, the technique of normal forms developed in this paper appeared to be very useful in proving the Dixmier conjecture for the first Weyl algebra offered in a recent preprint by the second author: A.B. Zheglov, {\it The conjecture of Dixmier for the first Weyl algebra is true}, https://arxiv.org/abs/2410.06959 . We hope that further development of the normal form technique together with effective methods of positive characteristic from papers \cite{BK}, \cite{BK1}, \cite{Ts1}, \cite{Ts2}, used in proving the equivalence of the Dixmier conjectures and the Jacobian conjectures, will allow people to come closer to solving at least the $JC_2$ conjecture. }. 

Analysing the shape of the Schur operator $S$, we find that all its homogeneous components are non-commutative polynomials with constant coefficients in the differentiation, integration, shift operators $A_i$ (see below) and the operator $\Gamma :=x\partial$ (we call such polynomials as HCPC for short), which have a finite order in each variable and additionally satisfy a specific property of being totally free of $B_j$ (see definition \ref{D:total_B}). All HCPC  form a subring $Hcpc(k)$, which occasionally looks very similar to the algebra of polynomial integro-differential operators studied in the paper \cite{Bavula} (though the shift operators are not included into this algebra). We establish estimates on the degrees of these polynomials in section \ref{S:necessary conditions on the Schur}. As a result we encode all necessary  properties of the operator $S$ in a condition $A_q(k)$ of section \ref{S:normal forms} (see  definition \ref{D:conditionA} or the list of notations below). With the help of this condition, we gave a criterion of an operator $P\in \hat{D}_1^{sym}$ to be a differential operator (see theorem \ref{T: P in hat(D) is a dif_op}). 

{\it A normal form} of a pair of operators $P,Q\in \hat{D}_1^{sym}$ is a pair $P', Q' \in  \hat{D}_1^{sym}$ obtained after conjugation by a 
Schur operator $S$ as above, calculated using one of the operators in a pair $(P,Q)$ (or, more generally, in the ring $\hat{D}_1^{sym}$, see definitions \ref{D:normal_form}, \ref{D:normalised_normal_form} and remark \ref{R:normal_forms}). The normal form is not uniquely defined, but up to conjugation with invertible $S\in \hat{D}_1^{sym}$ from the centralizer $C(\partial^k)$  with $\Ord (S)=0$. By centralizer theorem \ref{T:centraliser_theorem}  such $S$ is a {\it polynomial} of restricted degree. Notably, the whole centraliser $C(\partial^k)$ is naturally isomorphic to a matrix $k\times k$ algebra over a polynomial ring, see remark \ref{R:algebraic_dependence_of_normal_forms}.
The normal form of {\it commuting operators} can be normalized in some way (see section \ref{S:normal_forms_for_commuting}). By the centralizer theorem $C(\partial^k)$ consists of (non-commutative) {\it polynomials}, so  normalised normal form can be calculated for any such operators. If the operators do not commute, the normal forms will  be series in general, for which, however, it is possible to calculate any given number of terms. For a pair of {\it differential operators} normal forms satisfy condition $A_q(0)$ (see corollary \ref{C:P'}).

Normal forms of a commutative subring $B\subset D_1$ appear to be a very effective tool to describe the {\it moduli space of spectral sheaves}, i.e. torsion free sheaves on the spectral curve with certain conditions on cohomology groups, cf. \cite[\S 1.3]{BZ} and theorem \ref{T:classif2} below. Roughly speaking, the set of coefficients of a normalised normal form determines such a sheaf up to an isomorphism. This set depends on a choice of normalisation, and can be thought of as a system of local coordinates on a chart of a manifold -- the moduli space of spectral sheaves. Precise statement about this parametrisation in case of sheaves of rank one is formulated in theorem  \ref{T:parametrisation}.

We will consider  a similar description of the moduli space for sheaves of rank $>1$ in the next article, since this case requires much more details. 
We expect that further study of normal forms is reasonable not only for differential operators, but also for operators of other type, like difference, integro-differential, etc. In particular, it seems to be promising  to study normal forms of recently discovered examples from \cite{MironovMaul}, \cite{MironovMaul1}.

\bigskip

The structure of this article is the following. In section \ref{S:generic_normal_forms} we develop generic theory of normal forms for ordinary differential operators. Namely, in section \ref{S:Schur theory for the ring}  we review the Schur theory from \cite{A.Z} in the case of dimension one, strengthening some specific statements useful in the rest of the paper. In section \ref{S:Basic  formulae in} we deduce a list of useful formulae, in section \ref{S:HCP} we introduce an important technical notion of homogeneous canonical polynomials (HCP) and study their basic properties. This section contains important estimations and formulae useful for fast explicit calculations of concrete examples of normal forms and Schur operators. In section \ref{S:necessary conditions on the Schur}  we develop the Schur theory further by studying necessary conditions on the Schur conjugating operators for ordinary differential operators. In section \ref{S:normal forms} we introduce the main subject of this section -- normal forms for differential operators and study basic properties of them. 
The major result of this section is theorem \ref{T: P in hat(D) is a dif_op}, a criterion of an operator from $\hat{D}_1^{sym}$ to be a differential operator. 

In section \ref{S:normal_forms_for_commuting}  we study normal forms of commuting differential operators with the help of theory from section \ref{S:generic_normal_forms}. We give a convenient description of the centraliser and of normal forms of a pair of commuting operators. 

With the help of this description  we prove our main result, theorem \ref{T:parametrisation}, where a new parametrisation of torsion free sheaves of rank one with vanishing cohomology groups on a projective  curve is given.  In particular, this result indicates that the moduli space of {\it spectral sheaves} of rank one is an affine open subscheme of the compactified Jacobian (cf. remark \ref{R:moduli}).

 The paper (especially section 3) makes substantial use of the theory of commuting ODOs, which has a rich history, and its generalisations offered in a series of papers by the second author and other collaborators. To better understand the ideas behind this work, we recommend reading the background of the recent review \cite{Zheglov_book} (especially sections 9-11) and articles \cite{BurbanZheglov2017}, \cite{A.Z}.

{\bf Acknowledgements.}  The work of J. Guo was partially supported by the National Key R and D Program of China (Grant No. 2020YFE0204200). The work of A. Zheglov was partially supported by RSF grant no. 25-11-00210. 

A. Zheglov is grateful to the Sino-Russian Mathematics Center at Peking University for hospitality and excellent working conditions while preparing this paper.

We are grateful to Huijun Fan for his interest in our work, and for his advice and support to J. Guo. 

We are also grateful to the anonymous referees whose remarks allowed to improve the exposition.

\subsection{List of notations}

 Since this work uses quite different techniques, for convenience of the reader we introduce now the most important notations used in this paper.
 
1. $\dz_+$ is the set of all non-negative integers, $\dn$ is the set of natural numbers (all positive integers). $K$ is a field of characteristic zero. Recall some notation from \cite{A.Z}:
	$\hat{R}:=K [[x_1,\ldots ,x_n]]$, the $K$-vector space 
$$
\cm_n := \hat{R} [[\partial_1, \dots, \partial_n]] = \left\{
\sum\limits_{\underline{k} \ge \underline{0}} a_{\underline{k}} \underline{\partial}^{\underline{k}} \; \left|\;  a_{\underline{k}} \in \hat{R} \right. \;\mbox{for all}\;  \underline{k} \in \dn_0^n
\right\},
$$
$\upsilon:\hat{R}\rightarrow \dn_0\cup \infty$ --  the discrete valuation defined by the unique maximal ideal $\idm = (x_1, \dots, x_n)$ of $\hat{R}$,  \\
for any element
$
0\neq P := \sum\limits_{\underline{k} \ge \underline{0}} a_{\underline{k}} \underline{\partial}^{\underline{k}} \in \cm_n
$
$$
\Ord (P) := \sup\bigl\{|\underline{k}| - \upsilon(a_{\underline{k}}) \; \big|\; \underline{k} \in \dn_0^n \bigr\} \in \dz \cup \{\infty \},
$$
$$
\hat{D}_n^{sym}:=\bigl\{Q \in \cm_n \,\big|\, \Ord (Q) < \infty \bigr\};
$$
$
P_m:= \sum\limits_{ |\underline{i}| - |\underline{k}| = m} \alpha_{\underline{k}, \underline{i}} \,  \underline{x}^{\underline{i}} \underline{\partial}^{\underline{k}}
$ -- the $m$-th \emph{homogeneous component} of $P$,\\
$\sigma (P):=P_{\Ord (P)}$ -- the highest symbol.
	
2. In this paper we use: 	
	$\hat{R}:=K[[x]]$, $D_1:=\hat{R}[\partial]$, 
$$\hat{D}_1^{sym}:=\{Q=\sum_{k\ge 0}a_k\partial^k|\Ord(Q)<\infty\}.$$ 

Operators:  $\delta:=\exp((-x)\ast \partial)$\footnote{ Here and further $\ast$ in all exponentials means that we consider normalized Taylor power series, i.e. the powers of $x$ always stand on the left of powers of $\partial$, for example $\delta:=\exp((-x)\ast \partial)=\sum_{k=0}^{+\infty}\frac{(-1)^k}{k!}x^k\partial^k$.},   $\int:=(1-exp((-x)\ast \partial))\partial^{-1} $,  	$A_{k;i}:=\exp((\xi^{i}-1)x\ast \partial)\in \hat{D}^{sym}_{1}\hat{\otimes}_{K}\tilde{K}$ (in the case when $k$ is fixed, simply written as $A_i$), where $\tilde{K}=K[\xi]$, $\xi$ is a primitive $k$th root of unity, 
$\Gamma_i=(x\partial)^i$. $B_n=\frac{1}{(n-1)!}x^{n-1}\delta\partial^{n-1}$. 

$\hat{D}^{sym}_{1}\hat{\otimes}_{K}\tilde{K}$ means the same ring $\hat{D}^{sym}_{1}$, but defined over the base field $\tilde{K}$.

The operator $P\in D_1$ is called {\it normalized} if $P=\partial^p+a_{p-2}\partial^{p-2}+\ldots $. The operator $P\in D_1$ is {\it monic} if its highest coefficient is 1. Analogously, $P\in \hat{D}_1^{sym}$ is monic if $\sigma (P)=\partial^p$. 

	 We denote $D^i=\partial^i$ if $i\ge 0$ and $\int^{-i}$ if $i<0$. Operators written in the (Standard) form as 
	$$
	H=[\sum_{0\leq i<k}f_{i;r}(x, A_{k;i}, \partial )+\sum_{0<j\leq N}g_{j;r}B_{j}]D^{r}
	$$
	are called HCP and  form a sub-ring $Hcpc(k)$.   Here $f_{i;r}(x, A_{k;i}, \partial)$ is a polynomial of $x, A_{k;i},\partial$,  $\Ord(f_{i;r})=0$,  of the form
		$$
		f_{i;r}(x, A_{k;i}, \partial )=\sum_{0\leq l\leq d_{i}}f_{l,i;r}x^l A_{k;i}\partial^l
		$$
		for some $d_i\in \dz_+$, where $f_{l,i;r}\in \tilde{K}$. The number  $d_i$ is called the {\it $x$-degree of $f_{i;r}$}:  $deg_{x}(f_{i;r}):= d_i$; $g_{j;r}\in \tilde{K}$, $g_{j;r}=0$ for $j\le -r$ if $r<0$.

	They can be written also in G-form: 
	$$
	H=(\sum_{0\leq i<k}\sum_{0\leq l\leq d_i} f'_{l,i;r}\Gamma_lA_i+\sum_{0<j\leq N}g_{j;r}B_{j})D^{r}
	$$
	The $A$ and $B$ Stable degrees of HCP are defined as 
	$$
	Sdeg_A(H)=\max \{d_i|\quad 0\leq i<k \} \quad \mbox{or $-\infty$, if all $f_{l,i;r}=0$ } 
	$$
	and 
	$$Sdeg_B(H)=\max\{j|\quad g_{j;r}\neq0\} \quad \mbox{or $-\infty$, if all $g_{j;r}=0$}
	$$
	
	In the case when $Sdeg_B(HD^p)=-\infty,\forall p\in \mathbf{Z}$ $H$ is called {\it totally free of} $B_j$.
	
	An operator $P\in  \hat{D}_1^{sym}$ satisfies {\it condition $A_q(k)$}, $q,k\in \dz_+$, $q>1$ if 
	\begin{enumerate}
		\item
		$P_{t}$ is a HCP  from $Hcpc (q)$  for all $t$;
		\item
		$P_{t}$ is totally free of $B_j$ for all $t$;
		\item
		$Sdeg_A(P_{\Ord (P)-i})< i+k$ for all $i>0$;
		\item
		$\sigma (P)$ does not contain $A_{q;i}$, $Sdeg_A(\sigma (P))=k$.
	\end{enumerate}

3. In section 3, $\mathfrak{B}=\crr_S$ is the right quotient ring of $\crr = \tilde{K}^{\oplus k} [D,\sigma ]$ by $S=\{D^k|k\ge 0\}$. And the ring of skew pseudo-differential operators 
	$$
	E_k:=\tilde{K}[\Gamma_1, A_1]((\tilde{D}^{-1}))=\{\sum_{l=M}^{\infty}P_l\tilde{D}^{-l} | \quad P_l\in \tilde{K}[\Gamma_1, A_1]\} \simeq \tilde{K}^{\oplus k}[\Gamma_1]((\tilde{D}^{-1}))
	$$
with the commutation relations 
$$
\tilde{D}^{-1}a=\sigma (a)\tilde{D}^{-1}, \quad a\in \tilde{K}[\Gamma_1, A_1] \quad \mbox{where \quad }
\sigma (A_1)= \xi^{-1} A_1, \quad \sigma (\Gamma_1)=\Gamma_1+1.
$$	
	
	$\widehat{Hcpc}_B(k)$ is the $\tilde{K}$-subalgebra in $\hat{D}_1^{sym}\hat{\otimes}\tilde{K}$ consisting of operators whose homogeneous components are HCPs totally free of $B_j$. 
	
	$$\Phi : \tilde{K}[A_1,\ldots ,A_{k-1}]\rightarrow \tilde{K}^{\oplus k} , \quad  { P\mapsto (\sum_i p_i\xi^{i}, \ldots , \sum_i p_i\xi^{i(k-1)})}
$$
is an isomorphism of $\tilde{K}$-algebras. 	

The map 
$$
\hat{\Phi}: \widehat{Hcpc}_B(k) \hookrightarrow E_k
$$
denotes an embedding of $\tilde{K}$-algebras (see lemma \ref{L:embedding_Phi}). 
	
	Suppose $B$ is a commutative sub-algebra of $D_1$, then $(C,p,\cf)$ stands for the spectral data of $B$ (the spectral curve, point at infinity and the spectral sheaf with vanishing cohomologies).
	
	The classical ring of pseudo-differential operators  is defined as
	$$
E=K[[x]]((\partial^{-1})).
	$$

There is an isomorphism of $\tilde{K}$-algebras $\psi:\mathfrak{B}\rightarrow M_k(C(\mathfrak{B}))$, where $C(\mathfrak{B})\simeq \tilde{K}[\tilde{D}^k, \tilde{D}^{-k}]$, ($\tilde{K}$ is diagonally embedded into $\tilde{K}^{\oplus k}$):
	$$
	\psi\begin{pmatrix}
		h_0 \\
		h_1 \\
		\cdots \\
		h_{k-1}
	\end{pmatrix}=\begin{pmatrix}
		h_0 &  &  &  \\
		& h_1 &  &  \\
		&  & \cdots &  \\
		&  &  & h_{k-1}
	\end{pmatrix}\quad \psi(D)=T:=\begin{pmatrix}
		& 1 &  & \cdots &  \\
		&  & 1 & \cdots &  \\
		\cdots & \cdots & \cdots & \cdots & \cdots \\
		&  &  & \cdots & 1 \\
		D^k &  &  & \cdots & 
	\end{pmatrix}
	$$
	with $\psi(D^l)=T^l$, and extended by linearity. The map $\psi$ can be obviously extended to 
	$$
\psi : \tilde{K}^{\oplus k}((\tilde{D}^{-1})) \hookrightarrow M_k(\tilde{K}((\tilde{D}^{-k}))).
	$$

\section{Generic theory of normal forms}
\label{S:generic_normal_forms}

\subsection{Preliminary statements from the Schur theory for the ring $\hat{D}_1^{sym}$}
\label{S:Schur theory for the ring}

Suppose $K$ is a field of characteristic zero. 

Following the notations in \cite{A.Z}, denote $\hat{R}:=K[[x]]$, $D_1:=\hat{R}[\partial ]$, define the $K$-vector space
$$
\mathcal{M}_{1}:=\hat{R}[[\partial]]=\{\sum_{k\ge 0}a_{k}\partial^{k}|a_{k}\in \hat{R}\quad  \forall k \in \mathbb{N}_{0}\} ,
$$
where $v:\hat{R}\rightarrow \mathbb{N}_0 \cup \infty$ is the discrete valuation defined by the unique maximal ideal of $\hat{R}$, for any element $0\neq P:=\sum_{k \ge 0}a_{k}\partial^{k}\in \mathcal{M}_{1}$ define the order function 
$$
\Ord(P):=\sup\{k-v(a_{k})|\quad k\in \mathbb{N}_{0}\}\in \mathbb{Z}\cup \{\infty\}.
$$
Define the space
$$
\hat{D}^{sym}_{1}:=\{Q\in\mathcal{M}_1|\quad \Ord(Q)<\infty \} .
$$
By definition, any element $P\in \hat{D}^{sym}_{1}$ is written in the {\it canonical form} 
$$
P:=\sum_{k-i\le \Ord (P)}\alpha_{k,i}x^{i}\partial^{k}.
$$ 
We call $P_{m}:=\sum_{k-i=m}\alpha_{k,i}x^{i}\partial^{k}$ as the $m$-th {\it homogeneous component} of $P$, we call $\sigma(P):=P_{\Ord(P)}$ as the {\it highest symbol} of $P$. Then we have the (uniquely defined) {\it homogeneous decomposition} for any $P\in \hat{D}^{sym}_{1}$:
$$
P=\sum_{m=-\infty }^{\Ord(P) }P_{m}. 
$$
Denote by $A_1:=K[x][\partial ]$ the first Weyl algebra. Clearly, $A_1\subset D_1 \subset \hat{D}_1^{sym}$. 

\begin{rem}
\label{R:ord=deg}
	Note that the order function $\Ord$ coincide with the weight function $v_{1,-1}$ on the ring $A_1$ from the paper \cite{Dixmier}.  For any operator $P \in D_1$, we define the usual order (or degree) of  $P$  as $\deg (P):=v_{0,1}(P)$. If $P\in D_1$ has an invertible highest coefficient (with respect to the usual order), then it is easy to see that  $\deg (P)=\Ord (P)$. Vice versa, any $P\in D_1$  with $\deg (P)=\Ord (P)$ has an invertible highest coefficient.
\end{rem}

\begin{theorem}{(\cite{A.Z},Theorem 2.1)}
	\label{T:A.Z 2.1}
	The following statement are properties of $\hat{D}^{sym}_{1}$
	\begin{enumerate}
		\item $\hat{D}^{sym}_{1}$ is a ring (with natural operations $\cdot$ ,+ coming from $D_{1}$); $\hat{D}^{sym}_{1} \supset D_{1}$.
		\item $\hat{R}$ has a natural structure of a left $\hat{D}^{sym}_{1}$-module, which extends its natural structure of a left $D_{1}$-module.
		\item We have a natural isomorphism of $K$-vector spaces 
		$$
		F:=\hat{D}^{sym}_{1}/ \mathfrak{m}\hat{D}^{sym}_{1}\rightarrow K[\partial]
		$$
		where $\mathfrak{m}=(x)$ is maximal ideal of $\hat{R}$.
		\item Operators from $\hat{D}^{sym}_{1}$ can realise arbitrary endomorphisms of the $K$-algebra $\hat{R}$ which are continuous in the $\mathfrak{m}$-adic topology\footnote{ We consider here the representation of the ring  $\hat{D}_1^{sym}$ given in item 2, in terms of this representation any continuous endomorphism can be represented by an operator from $\hat{D}_1^{sym}$ (the details can be found in \cite{A.Z}), so $\hat{D}_1^{sym}\supset \End_{K-alg}^c(\hat{R})$, however, say, the operator $\partial$ is not a $K$-algebra endomorphism}.
		\item There are Dirac delta functions, operators of integration, difference opertors.
	\end{enumerate}
\end{theorem}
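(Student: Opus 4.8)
The plan is to extract all five assertions from one mechanism: finiteness of $\Ord$ is exactly what makes the relevant formal series converge $\mathfrak{m}$-adically, and $\hat{D}_1^{sym}$ acts faithfully on the polynomial ring $K[x]$, which then forces the ring and module axioms for free. First I would record the trivial inclusion $D_1\subseteq\hat{D}_1^{sym}$ (a polynomial in $\partial$ has bounded $\partial$-degree, hence finite $\Ord$) and observe that for $P=\sum_{k\ge0}a_k\partial^k\in\hat{D}_1^{sym}$ and $f\in\hat{R}$ the expression $Pf:=\sum_k a_kf^{(k)}$ is well defined: since $v(a_k)\ge k-\Ord(P)$ and $v(f^{(k)})\ge0$, one has $v(a_kf^{(k)})\ge k-\Ord(P)\to\infty$, so the sum converges; this extends the natural $D_1$-action on $\hat{R}$, and for $f\in K[x]$ it is a finite sum with $Pf\in K[x]$. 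Next I would define multiplication on $\hat{D}_1^{sym}$ by extending the Leibniz rule $\partial^k a=\sum_j\binom{k}{j}a^{(j)}\partial^{k-j}$ from $D_1$; a short valuation estimate shows that the coefficient of each $\partial^m$ in a product $PQ$ is an $\mathfrak{m}$-adically convergent series and that $\Ord(PQ)\le\Ord(P)+\Ord(Q)<\infty$, so $PQ\in\hat{D}_1^{sym}$, and the same bookkeeping gives $(PQ)f=P(Qf)$ for every polynomial $f$. The action on $K[x]$ is faithful: if $Pf=0$ for all $f\in K[x]$, then evaluating on $1,x,x^2,\dots$ in turn and using $\chara K=0$ forces $a_0=a_1=\dots=0$. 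Hence associativity and distributivity of the new product are inherited from composition of operators on $K[x]$, which is statement (1); and the identity $(PQ)f=P(Qf)$, valid on the dense subspace $K[x]$, extends by $\mathfrak{m}$-adic continuity of $f\mapsto Pf$ to all $f\in\hat{R}$, which is statement (2).

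For (3) I would use the $K$-linear map $\rho\colon\hat{D}_1^{sym}\to K[\partial]$, $\sum_k a_k\partial^k\mapsto\sum_k a_k(0)\partial^k$; this is well defined because $a_k(0)=0$ as soon as $k>\Ord(P)$, and it is visibly surjective. Its kernel consists of the operators all of whose coefficients lie in $\mathfrak{m}$, and I claim this equals $\mathfrak{m}\hat{D}_1^{sym}$: the inclusion $\supseteq$ is immediate, and conversely, writing each $a_k=xb_k$, the operator $\sum_k b_k\partial^k$ again lies in $\hat{D}_1^{sym}$ (its order is at most $\Ord(P)+1$), so $P\in x\hat{D}_1^{sym}$. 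Thus $\rho$ descends to the claimed isomorphism of $K$-vector spaces $F=\hat{D}_1^{sym}/\mathfrak{m}\hat{D}_1^{sym}\cong K[\partial]$.

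For (4), a $K$-algebra endomorphism of $\hat{R}$ continuous in the $\mathfrak{m}$-adic topology is precisely the datum of an element $g=\phi(x)\in\mathfrak{m}$, any such $g$ being allowed (the endomorphism is then $\sum c_nx^n\mapsto\sum c_ng^n$). I would realise it by the ``umbral exponential'' $T_\phi:=\sum_{k\ge0}\frac{1}{k!}(g-x)^k\partial^k$: since $v\big((g-x)^k/k!\big)=k\,v(g-x)\ge k$, one gets $\Ord(T_\phi)\le0$, so $T_\phi\in\hat{D}_1^{sym}$; and computing in $\hat{R}[[t]]$ one finds $T_\phi(e^{tx})=\big(\sum_k\frac{((g-x)t)^k}{k!}\big)e^{tx}=e^{(g-x)t}e^{tx}=e^{gt}$, so $T_\phi(x^n)=g^n=\phi(x^n)$ for all $n$ and hence $T_\phi=\phi$ on $\hat{R}$. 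Statement (5) then comes out by specialisation: $g=0$ gives the Dirac delta $\delta=\sum_k\frac{(-x)^k}{k!}\partial^k=\exp((-x)\ast\partial)$ with $\delta f=f(0)$; $g=\xi^ix$ gives the difference (scaling) operator $A_{k;i}\colon f(x)\mapsto f(\xi^ix)$ over $\tilde{K}=K[\xi]$; and a direct check shows $\int:=\sum_{k\ge0}\frac{(-1)^k}{(k+1)!}x^{k+1}\partial^k$ has $\Ord=-1$ and satisfies $\partial\int=1$ and $\int\partial=1-\delta$ (equivalently $\int=(1-\delta)\partial^{-1}$), i.e.\ it is the integration operator --- all matching the List of Notations.

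The step I expect to be the real obstacle is setting up the multiplication in (1)--(2): one must carefully verify the $\mathfrak{m}$-adic convergence of all the double series involved, the bound $\Ord(PQ)\le\Ord(P)+\Ord(Q)$, and the compatibility $(PQ)f=P(Qf)$ (on polynomials the last is just two applications of the Leibniz formula). Once the product is in place and the faithful action on $K[x]$ is observed, the ring and module axioms, and then statements (3)--(5), are essentially formal.
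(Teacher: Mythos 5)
This theorem is quoted from \cite{A.Z} (Th.~2.1); the present paper gives no proof of it, so there is no in-paper argument to compare yours with, and your proposal has to be judged on its own. As far as I can check it is a correct, self-contained proof along the natural route: the Leibniz product with valuation estimates (the key point being that in the coefficient of $\partial^m$ one has $j\le k$, hence $l\le m$ and the sum converges, with $\Ord(PQ)\le\Ord(P)+\Ord(Q)$), the faithful action on $\hat{R}$ detected on monomials, reduction modulo $\mathfrak{m}$ for item~3, and the Taylor-type operator $T_\phi=\sum_{k\ge0}\frac{1}{k!}(\phi(x)-x)^k\partial^k$ realising any continuous $K$-algebra endomorphism, with $\delta$, $\int$, $A_{k;i}$ as specialisations that match exactly the series in the List of Notations. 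Two small points should be tidied up. First, for $f\in K[x]$ and $P\in\hat{D}_1^{sym}$ the element $Pf$ is a finite sum but lies only in $\hat{R}$, not in $K[x]$ (the coefficients $a_k$ are power series); this does not affect faithfulness, but it does mean that to deduce associativity from the action you must already know $(PQ)g=P(Qg)$ for all $g\in\hat{R}$, since $Qf$ is in general a power series even for polynomial $f$. So the continuity extension you invoke for item~2 has to precede the ring axioms of item~1 (or associativity must be checked directly by the same rearrangement of convergent double sums). Second, you should record explicitly the estimate $v(Pf)\ge v(f)-\Ord(P)$, which is what makes $f\mapsto Pf$ $\mathfrak{m}$-adically continuous and legitimises all the density arguments, including the identification $T_\phi=\phi$ on all of $\hat{R}$. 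With these reorderings the argument is complete.
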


The Dirac delta is given by the series $\delta:=exp((-x)\ast \partial):= 1- x\partial +\frac{1}{2!}x^2\partial^2-\ldots$,  which satisfies $\delta \circ f(x)=f(0)$\footnote{ We denote by $\circ$ here and further  the action of operators from $\hat{D}_1^{sym}$ on elements of $\hat{R}$, determined by the module structure from item 2 of \ref{T:A.Z 2.1}. We denote by $\cdot$ the multiplication in the ring $\hat{D}_1^{sym}$. Sometimes the sign $\cdot$ is omitted, and in these cases we  always mean $\cdot$, not $\circ$. 
}, and the operator of integration is given by the series
$$\int:=(1-exp((-x)\ast \partial))\cdot \partial^{-1}=\sum_{k=0}^{\infty}\frac{x^{k+1}}{(k+1)!}(-\partial)^{k} $$
which satisfies  
$$
\int \circ x^{m}=\frac{x^{m+1}}{m+1}.
$$
Note that  $\int$ is only the right inverse of $\partial$, because $\partial \int =1$ and  $\int  \partial =((1-exp((-x)\ast \partial)) \partial^{-1} \partial)=1-\delta$. 

\begin{rem}
\label{R:ord_properties}
Unlike the usual ring of PDOs the ring $\hat{D}_1^{sym}$ contains zero divisors. There are the following obvious properties of the order function (cf. the proof of \cite[Th. 5.3]{BurbanZheglov2017}):
\begin{enumerate}
\item
$\Ord (P Q)\le \Ord (P)+\Ord (Q)$, and the equality holds if $\sigma (P)\sigma (Q)\neq 0$,
\item
$\sigma (P Q)=\sigma (P) \sigma (Q)$, provided $\sigma (P) \sigma (Q)\neq 0$,
\item
$\Ord (P+Q)\le \max\{\Ord (P),\Ord (Q)\}$.
\end{enumerate}
In particular, the function $-\Ord$ determines  a discrete pseudo-valuation on the ring $\hat{D}_1^{sym}$. 
\end{rem}

\begin{Def}
\label{D:Aki}
	Let $\xi$ be a primitive $k$-th root of unity, $\tilde{K}=K[\xi]$. For any $i\in \mathbb{Z}$, we define operators 
	$$
	A_{k;i}:=\exp((\xi^{i}-1)x\ast \partial)\in \hat{D}^{sym}_{1}\hat{\otimes}_{K}\tilde{K},
	$$
	where $\hat{D}^{sym}_{1}\hat{\otimes}_{K}\tilde{K}$ means the same ring $\hat{D}^{sym}_{1}$, but defined over the base field $\tilde{K}$. 
	
	Further, if it will be clear from the context, we'll omit index $k$ and use notation $A_i:=A_{k;i}$.
\end{Def}

\begin{lemma}{(cf. \cite{A.Z},Lemma 7.2)}
	\label{T:A.Z L 7.2}
	The sum $$
	A:=c_0+c_1 A_{k,1}+\dots+c_{k-1}A_{k;k-1},\quad c_{i}\in \tilde{K}
	$$
	is equal to zero iff $c_{i}=0, i=0,\dots,k-1$. If it is not equal to zero, then it is of order zero. 
	
	Moreover, $A$ is a polynomial in $\partial$ iff $c_1=\ldots = c_{k-1}=0$.
\end{lemma}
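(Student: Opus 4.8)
The plan is to establish the two assertions in turn: first the linear independence statement (together with the order claim), then the characterization of when $A$ is a polynomial in $\partial$.

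For the first part, I would use the action of the operators $A_{k;i}$ on the module $\hat{R}=K[[x]]$ (or rather its scalar extension $\hat{R}\hat{\otimes}_K\tilde{K}$). By definition $A_{k;i}=\exp((\xi^i-1)x\ast\partial)$ is the substitution operator $f(x)\mapsto f(\xi^i x)$: indeed, $\exp(c\,x\ast\partial)$ acts on $x^m$ by $(1+c)^m x^m$, so taking $1+c=\xi^i$ gives $A_{k;i}\circ x^m=\xi^{im}x^m$. Hence if $A=\sum_{i=0}^{k-1}c_i A_{k;i}$ then $A\circ x^m=\bigl(\sum_{i=0}^{k-1}c_i\xi^{im}\bigr)x^m$. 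If $A=0$ then $\sum_{i=0}^{k-1}c_i\xi^{im}=0$ for all $m\ge 0$, and looking at $m=0,1,\dots,k-1$ gives a Vandermonde system in the distinct values $1,\xi,\dots,\xi^{k-1}$ (distinct since $\xi$ is a primitive $k$-th root of unity), whose only solution is $c_i=0$ for all $i$. Conversely, if not all $c_i$ vanish, then $A\circ 1=\sum c_i\neq 0$ is possible but not guaranteed; more robustly, since each $A_{k;i}$ has order $0$ (it is invertible with inverse $A_{k;-i}$, and $\sigma(A_{k;i})=1$), Remark \ref{R:ord_properties}(3) gives $\Ord(A)\le 0$, and $\Ord(A)=0$ because the homogeneous component of degree $0$ is $A_0=\sum_i c_i\,(\text{identity-type term})$ — more precisely, writing each $A_{k;i}$ in canonical form, the degree-$0$ homogeneous component of $A$ is a nonzero operator by the same Vandermonde argument applied to the leading coefficients, so $\Ord(A)=0$ and in particular $A\neq 0$.

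For the second part, I would again use the action on monomials: $A$ is a polynomial in $\partial$, say $A=p(\partial)$, iff $A$ commutes with $\partial$ and also... actually the cleanest route is: $A\circ x^m=\lambda_m x^m$ where $\lambda_m=\sum_i c_i\xi^{im}$, whereas a polynomial $p(\partial)=\sum_j b_j\partial^j$ acts by $p(\partial)\circ x^m=\sum_j b_j\,m(m-1)\cdots(m-j+1)x^{m-j}$. For $A=p(\partial)$ we would need $p(\partial)$ to preserve each line $K x^m$, which forces $p$ to be constant (degree $0$ in $\partial$), i.e. $p(\partial)=b_0$, hence $\lambda_m=b_0$ for all $m$; then $\sum_i c_i\xi^{im}=b_0$ for all $m\ge 0$, and subtracting the $m=0$ and $m=1,\dots$ relations and using the Vandermonde/primitive-root argument again yields $c_1=\dots=c_{k-1}=0$ (and $c_0=b_0$). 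The converse is immediate: if $c_1=\dots=c_{k-1}=0$ then $A=c_0$ is trivially a polynomial in $\partial$.

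The main obstacle — really the only subtle point — is pinning down the order claim rigorously: one must be careful that "$\Ord(A)=0$" is not automatic from $\Ord(A)\le 0$, and must exhibit a nonzero homogeneous component of degree $0$. I expect this to be handled by writing $A_{k;i}=\sum_{m\ge 0}\tfrac{(\xi^i-1)^m}{m!}(x\partial)^m$ in terms of $\Gamma=x\partial$, observing that the degree-$0$ homogeneous component of $(x\partial)^m$ is $(x\partial)^m$ itself (all these terms have equal numbers of $x$'s and $\partial$'s), so $A_0=\sum_{i}c_i A_{k;i}$ literally — i.e. $A$ is already homogeneous of degree $0$ — and then the vanishing of $A_0$ is exactly the Vandermonde statement already proved. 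This also makes the polynomial-in-$\partial$ criterion transparent, since a nonconstant polynomial in $\partial$ is not homogeneous of degree $0$ in the grading where $\deg\partial=1,\deg x=-1$ unless it is a constant.
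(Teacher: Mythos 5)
Your proof is correct, but it follows a genuinely different route from the paper's. The paper imports the first assertion from \cite[Lem. 7.2]{A.Z} and proves the new (polynomial) criterion by canonical-form bookkeeping: expanding $A_{k;i}=\sum_{m\ge 0}\frac{(\xi^i-1)^m}{m!}x^m\partial^m$, the requirement that $A$ be a polynomial in $\partial$ forces the coefficients of $x^j\partial^j$ to vanish for all large $j$, i.e. $c_1(\xi-1)^j+\dots+c_{k-1}(\xi^{k-1}-1)^j=0$ for $j\ge n_0$, and the Vandermonde property at the distinct nonzero nodes $\xi^i-1$ yields $c_1=\dots=c_{k-1}=0$. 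You instead exploit the left module $\hat R$: each $A_{k;i}$ acts as the scaling $x^m\mapsto\xi^{im}x^m$, so both ``$A=0$'' and ``$A=p(\partial)$'' become eigenvalue identities, settled by the Vandermonde matrix at the nodes $1,\xi,\dots,\xi^{k-1}$; and you get $\Ord(A)=0$ from the observation that $A$ is homogeneous of order zero, so $\sigma(A)=A\neq 0$. Your version is self-contained (it reproves the cited first part rather than quoting it) and makes the order claim transparent, while the paper's stays within the uniqueness-of-canonical-form technique used throughout Section 2; the underlying nondegeneracy input is the same. Two cosmetic slips, neither affecting validity: in your last paragraph the expansion should be $A_{k;i}=\sum_{m\ge 0}\frac{(\xi^i-1)^m}{m!}\,x^m\partial^m$ (this is what the paper's $x\ast\partial$ notation means, cf. the definition of $\delta$), not $\frac{(\xi^i-1)^m}{m!}(x\partial)^m$ --- though either family of terms is homogeneous of order zero, so the homogeneity conclusion stands, and your eigenvalue computation $(1+c)^m x^m$ already presupposed the correct expansion; and $\sigma(A_{k;i})$ is $A_{k;i}$ itself (the operator coincides with its order-zero component), not $1$.
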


\begin{proof}
The first part of lemma coincides with \cite[Lem. 7.2]{A.Z}. The last assertion follows easily from the proof of \cite[Lem. 7.2]{A.Z}. Namely, $A$ is a polynomial iff the infinite system of linear equations hold 
$$
c_1(\xi-1)^j+\ldots +c_{k-1}(\xi^{k-1}-1)^j=0, \quad j\ge n_0\in \dn
$$
for an appropriate $n_0$. But by the well known property of the Vandermonde matrix this system has the unique solution $c_1=\ldots =c_{k-1}=0$. 

\end{proof}

The following claim is a partial case of \cite[Prop. 7.1]{A.Z}.

\begin{Prop}
	\label{T:A.Z 7.1}
	In the notation of definition \ref{D:Aki} we have 
	\begin{enumerate}
		\item For any $i,j,p$, we have
	$$A_{i} A_{j}=A_{i+j}, \quad \partial^{p}A_{i}=\xi^{pi}A_{i}\partial^{p}, \quad A_{i}x^{p}=\xi^{pi}x^{p}A_{i},\quad \int^{p}A_{i}=\xi^{-pi}A_{i}\int^{p}.$$
		\item For a given $Q \in  \hat{D}^{sym}_{1} $ assume that $[\partial^{k},Q]=0 $. 
		\\Then $Q=c_{0}+c_{1}A_{1}+\dots+c_{k-1}A_{k-1}\in  \hat{D}^{sym}_{1}\hat{\otimes}_{K}\tilde{K}$, where $c_{i}\in  \hat{D}^{sym}_{1}\hat{\otimes}_{K}\tilde{K}$ are given by the following formula:
		$$c_{i}=\sum_{m=0}^{\Ord (Q)}c_{i,m}\partial^{m}+c_{i,-1}\int+\dots+c_{i,-k+1}\int^{k-1} $$ 
		where $c_{i,j}\in \tilde{K}$ (so that $\Ord(c_{i})\le \Ord(Q)$). Besides, the coefficients $c_{i,j}$ are uniquely defined.
	\end{enumerate}
	
\end{Prop}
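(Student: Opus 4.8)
The plan is to prove Proposition \ref{T:A.Z 7.1} in two parts, matching its statement. For part (1), I would simply verify each identity directly from the definition $A_{k;i}=\exp((\xi^i-1)x\ast\partial)$. The key observation is that $x\ast\partial$ acts diagonally on the monomial basis $\{x^m\}$ of $\hat R$ (it multiplies $x^m$ by $-m$, up to the conventions fixed in the excerpt), so $A_{k;i}$ acts on $x^m$ by multiplication by $\xi^{im}$ (or $\xi^{-im}$, depending on the sign convention). From this, $A_iA_j$ acts by $\xi^{(i+j)m}$, which gives $A_iA_j=A_{i+j}$; the commutation relations with $\partial^p$, $x^p$, and $\int^p$ follow because each of these operators shifts the degree $m$ by $\pm p$, producing the scalar $\xi^{\pm pi}$. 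These are routine once the diagonal action is set up, so I would state them concisely and refer to \cite[Prop. 7.1]{A.Z} for the general dimension-$n$ version.

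For part (2), suppose $[\partial^k,Q]=0$ with $Q\in\hat D_1^{sym}$. The strategy is: first expand $Q$ in its homogeneous decomposition $Q=\sum_{m\le\Ord(Q)}Q_m$ and observe that $[\partial^k,\cdot]$ preserves homogeneous components (it shifts degree by $+k$... actually $\partial^k$ raises order, so $[\partial^k,Q_m]$ has homogeneous degree $m+k$), hence it suffices to treat each $Q_m$ separately. A homogeneous component of degree $m$ has the shape $\sum_j \alpha_j x^j\partial^{j+m}$ (finitely many terms if $m\ge 0$... but infinitely many if $m<0$, so more care is needed). The cleanest route, though, is to use the structure already developed: by Lemma \ref{T:A.Z L 7.2} and its proof, the centralizer-type argument shows that any operator commuting with $\partial^k$ can be written as a $\tilde K$-combination $c_0+c_1A_1+\dots+c_{k-1}A_{k-1}$ where the $c_i$ commute with $\partial^k$ and in fact with $\partial$ — so each $c_i$ lies in the centralizer of $\partial$. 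I would then show that the centralizer of $\partial$ inside $\hat D_1^{sym}\hat\otimes\tilde K$, among elements of bounded order, consists exactly of expressions $\sum_{m=0}^{N}c_{i,m}\partial^m + c_{i,-1}\int+\dots+c_{i,-k+1}\int^{k-1}$ with scalar coefficients — i.e. "polynomials in $\partial$ plus a bounded tail in the integration operator $\int$." The point is that $\partial\cdot\int = 1$ but $\int\cdot\partial = 1-\delta$, so powers of $\int$ are not redundant; however $\partial^k\int^k = 1$ forces the $\int$-tail to terminate at exponent $k-1$ once we impose commutation with $\partial^k$ rather than all of $\partial$.

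Concretely, the order of operations: (a) reduce to showing an element $c$ with $[\partial^k,c]=0$ and $\Ord(c)<\infty$ — wait, more precisely, after writing $Q=\sum c_i A_i$ one checks $[\partial^k,c_i]=0$ using the commutation relation $\partial^k A_i = \xi^{ki}A_i\partial^k = A_i\partial^k$ from part (1) (since $\xi^k=1$), plus the linear independence from Lemma \ref{T:A.Z L 7.2}; (b) analyze the left $\hat D_1^{sym}$-module structure on $\hat R$ — an operator commuting with $\partial^k$ is determined by finitely many "jets", and one matches it against the explicit candidate; (c) establish uniqueness of the $c_{i,j}$, which again follows from Lemma \ref{T:A.Z L 7.2} applied to the $A_i$ together with linear independence of $\{\partial^m\}_{m\ge0}\cup\{\int,\dots,\int^{k-1}\}$ as operators (this last independence is where I'd be careful: one shows a nontrivial such combination acts nontrivially on some $x^N$). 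The order bound $\Ord(c_i)\le\Ord(Q)$ drops out by tracking homogeneous degrees through the decomposition.

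The main obstacle I anticipate is step (b)/(c): pinning down precisely why the centralizer of $\partial^k$ (as opposed to $\partial$) introduces exactly the integration tail $\int,\dots,\int^{k-1}$ and no more, and why these, together with the $A_i$, give a basis with uniquely determined scalar coefficients. This requires understanding the kernel of $\partial^k$ acting on $\hat R$ (which is $k$-dimensional, spanned by $1,x,\dots,x^{k-1}$) and how the delta-like operators $1-\delta$ and its iterates interact with $\int$. Everything else — the commutation identities, the reduction to homogeneous components, the invocation of Lemma \ref{T:A.Z L 7.2} — is bookkeeping that follows the template already present in \cite{A.Z}.
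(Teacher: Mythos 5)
Your part (1) is fine: the operators act diagonally on the monomials $x^m$ (with $A_{k;i}(x^m)=\xi^{im}x^m$), the action of $\hat{D}_1^{sym}$ on $\hat R$ is faithful, and the four identities follow by checking them on each $x^m$; this is essentially the routine verification the paper also relies on.

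Part (2), however, has a genuine gap at its core: the existence of the decomposition $Q=c_0+c_1A_1+\dots+c_{k-1}A_{k-1}$ is exactly what must be proved, and you obtain it by appealing to ``Lemma \ref{T:A.Z L 7.2} and its proof,'' which only treats combinations with \emph{constant} coefficients $c_i\in\tilde K$ (it is a linear-independence statement, not a centralizer description). Worse, the intermediate claims you would use in its place are false: the coefficients $c_i$ of the proposition do \emph{not} commute with $\partial$, nor with $\partial^k$, since $[\partial,\int]=\delta\neq 0$ and $[\partial^k,\int]=\delta\partial^{k-1}\neq 0$; the centralizer of $\partial$ in $\hat D_1^{sym}$ consists only of polynomials in $\partial$, so your proposed characterization of it as containing the $\int$-tail is inconsistent, and even for $\partial^k$ the individual summands $c_iA_i$ need not commute — only constrained combinations such as $(\sum_j c_jA_j)\int^{l}$ with $\sum_j c_j\xi^{j(q-1)}=0$ do (cf.\ lemma \ref{L:homog_comm}). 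Consequently your step (a) (``check $[\partial^k,c_i]=0$'') breaks down, and the slogan ``$\partial^k\int^k=1$ forces the tail to stop at $\int^{k-1}$'' is not an argument. The paper's route is different and is what actually carries the proof: replace right multiplication by $\partial$ with a commuting formal variable $\tilde\partial$, so that $[\partial^k,Q]=0$ becomes the constant-coefficient ODE $\prod_{i=1}^k(\partial+(1-\xi^i)\tilde\partial)(Q')=0$ over $\tilde K[[x]]((\tilde\partial))$, whose solutions are spanned by the exponentials $\exp((\xi^i-1)x\tilde\partial)$ with $x$-independent coefficients; the order bound $\Ord(c_i)\le\Ord(Q)$ is then lemma \ref{L:various_decompositions}, and the bound $m\le k-1$ on the $\int$-tail is lemma \ref{L:restriction_on_int}, proved by an explicit computation showing $[\partial^k,Q_{-m}]\neq 0$ whenever $m\ge k$. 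Your uniqueness step (via lemma \ref{T:A.Z L 7.2} applied to each homogeneous component) is correct, but without the ODE-type argument (or some substitute) the existence part of your proposal does not go through.
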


\begin{proof}
For convenience of the reader we'll give here a proof of item 2 which is easier in our case than the  proof of  \cite[Prop. 7.1]{A.Z} (and we'll use some of its arguments later).

The identity $[\partial^k, Q]=0$ can be rewritten as 
\begin{equation}
\label{E:comm}
\sum_{i=1}^k \binom{k}{i}  \partial^i(Q)\partial^{k-i}=0. 
\end{equation}
Note that any solution $Q\in \hat{D}_1^{sym}$ of this equation gives a solution 
$Q'\in \tilde{K}[[x, \tilde{\partial}]]$ of the equation  
$$
\sum_{i=1}^k \binom{k}{i} \partial^i(Q')\tilde{\partial}^{k-i}=0,
$$
where $\tilde{\partial}$ means a new formal variable (commutative with  $x$). Namely, we just replace $\partial$ by $\tilde{\partial}$ in the series $Q$. 

On the other hand, the last equation can be written in the form 
$$
\prod_{i=1}^k(\partial+ (1-\xi_k^i)\tilde{\partial})(Q')=0.
$$
Any solution of the last equation {\it in the commutative ring} 
$\tilde{R}:=\tilde{K}[[x]]((\tilde{\partial}))$ has the form
$$
Q'= c_0 +c_1 \exp ((\xi_k-1)x\tilde{\partial}) +\ldots +c_{k-1}\exp ((\xi_k^{k-1}-1)x\tilde{\partial}), 
$$
where $c_i\in \tilde{R}$ don't depend on $x$ (as it follows from elementary differential algebra)\footnote{First note that $\ker\{\partial : \tilde{R}\rightarrow \tilde{R}\}=\tilde{K}((\tilde{\partial}))$. Suppose there is a solution $H$ such that it is linearly independent with $1,   \exp ((\xi_k-1)x\tilde{\partial}), \ldots , \exp ((\xi_k^{k-1}-1)x\tilde{\partial})$ over $\tilde{K}((\tilde{\partial}))$. Then $H_0:=\partial (H)=H'$ is not equal to zero and is linearly independent with  $\exp ((\xi_k-1)x\tilde{\partial}), \ldots , \exp ((\xi_k^{k-1}-1)x\tilde{\partial})$ over $\tilde{K}((\tilde{\partial}))$. By induction, $H_i:=(\partial+ (1-\xi_k^i)\tilde{\partial})(H_{i-1})=H_{i-1}'+ (1-\xi_k^i)\tilde{\partial} H_{i-1}$ is not equal to zero and is linearly independent with  $\exp ((\xi_k^{i+1}-1)x\tilde{\partial}), \ldots , \exp ((\xi_k^{k-1}-1)x\tilde{\partial})$ over $\tilde{K}((\tilde{\partial}))$ for all $i\le k-1$, in particular $H_{k-1}\neq 0$. On the other hand, $H_{k-1}=\prod_{i=1}^k(\partial+ (1-\xi_k^i)\tilde{\partial})(H)=0$, a contradiction.}.

If we choose a canonical representation form of elements in $\tilde{R}$ such that each monomial has the form $x^{j}\tilde{\partial}^{q}$ (with $x$ on the left and 
$\tilde{\partial}$ on the right), then the right hand side of the last formula can be rewritten as 
\begin{equation}
\label{E:summand'}
Q'':=c_0+\exp ((\xi_k-1)x\tilde{\partial})c_1+\ldots + \exp ((\xi_k^{k-1}-1)x\tilde{\partial})c_{k-1}=c_0+\tilde{A}_1c_1+\ldots + \tilde{A}_{k-1}c_{k-1}.
\end{equation}
Then $Q'=Q''$ also as elements written in this representation. Note that, since $Q'$ contains only non-negative powers of $\partial$, the series $Q''$ in \eqref{E:summand'} contains only non-negative powers of $\partial$  (and if we replace $\tilde{\partial}$ by $\partial$ in all terms of $Q''$, we get again the operator $Q$).  

\begin{lemma}
\label{L:various_decompositions}
For any $i=0, \ldots , k-1$ we have $\Ord (c_{i})\le \Ord (Q)$ in formula \eqref{E:summand'}, where the order $\Ord$ is defined on $\tilde{R}$ in the same way as on $\cm_1$.
\end{lemma}

\begin{proof}
Since the elements $c_{i}$, { being elements from $\tilde{K}((\tilde{\partial}))$, have {\it restricted} (positive) power in $\tilde{\partial}^{-1}$}, the expression in \eqref{E:summand'} can be written in the form
$$
(\tilde{c}_0+\tilde{A}_1\tilde{c}_1+\ldots + \tilde{A}_{k-1}\tilde{c}_{k-1})\tilde{\partial}^{-m},
$$
where $\tilde{c}_i\in \tilde{K}[[\tilde{\partial}]]$ and $m\ge 0$, i.e. the series in brackets is divisible by  $\tilde{\partial}^{m}$. We'll additionally assume that $m$ is minimal, i.e. $GCD(\tilde{c}_0, \ldots , \tilde{c}_{k-1})=1$ in the ring $\tilde{K}[[\tilde{\partial}]]$. 

Obviously, the homogeneous decomposition is unique also in the space $\tilde{R}$, and therefore in \eqref{E:summand'} we have the unique homogeneous decomposition 
$$
c_0+\tilde{A}_1c_1+\ldots + \tilde{A}_{k-1}c_{k-1}=\sum_{l\ge -m}(c_{0,l}+c_{1,l}\tilde{A}_1+\ldots + c_{k-1,l}\tilde{A}_{k-1})\tilde{\partial}^l,
$$
where $c_{i,j}\in \tilde{K}$. Since $\Ord (Q) <\infty$ and $\Ord (A_{k; i})=0$, we should have 
$$
\Ord (c_0+\tilde{A}_1c_1+\ldots + \tilde{A}_{k-1}c_{k-1}) \le \Ord (Q)
$$
and therefore by lemma \ref{T:A.Z L 7.2} $c_{i,l}=0$ for all $l> \Ord (Q)$ and all $i=0,\ldots ,k-1$. This means that $\Ord (c_i)\le \Ord (Q)$ for any $i$. 
\end{proof}

From lemma \ref{L:various_decompositions} it follows that the series $\tilde{c}_{i}$ will belong to $\hat{D}_1^{sym}$ after replacing $\tilde{\partial}$ by $\partial$ in all terms. Now note that in $\hat{D}_1^{sym}$ we have 
$$
(\tilde{c}_0+\tilde{A}_1\tilde{c}_1+\ldots + \tilde{A}_{k-1}\tilde{c}_{k-1})\tilde{\partial}^{-m}|_{\tilde{\partial}\mapsto \partial}=
(\tilde{c}_0+\tilde{A}_1\tilde{c}_1+\ldots + \tilde{A}_{k-1}\tilde{c}_{k-1})|_{\tilde{\partial}\mapsto \partial}\int^{m}
$$
i.e. $Q$ can be written in the form:
\begin{equation}
\label{E:summand}
Q=(\tilde{c}_0+\tilde{A}_1\tilde{c}_1+\ldots + \tilde{A}_{k-1}\tilde{c}_{k-1})|_{\tilde{\partial}\mapsto \partial}\int^{m}.
\end{equation}
Besides, all summands in the sum \eqref{E:summand} are well defined elements of $\hat{D}_1^{sym}$ of order $\le \Ord (Q)$ and their sum is also well defined in $\hat{D}_1^{sym}$. 

\begin{lemma}
\label{L:restriction_on_int}
In the formula \eqref{E:summand} we have $m\le k-1$. 
\end{lemma}

\begin{proof}
Assume the converse: $m\ge k$. By our assumption, the homogeneous component $Q_{-m}\neq 0$, and 
$$
Q_{-m}=(c_{0,-m}+c_{1,-m}{A}_1+\ldots + c_{k-1,-m}{A}_{k-1})\int^m
$$ 
Since $[\partial^k,Q]=0$, we have also 
$$
0=[\partial^k,Q_{-m}]=\partial^k(Q_{-m}) +\sum_{i=1}^{k-1}\binom{k}{i}\partial^{k-i}(Q_{-m})\partial^i.
$$
Let the canonical form of $Q_{-m}$ be $Q_{-m}=\sum_{p\ge 0}a_{m+p}x^{m+p}\partial^p$. Let $a_{m+z}$ be the first coefficient not equal to zero. Then we have
\begin{multline*}
\partial^k(Q_{-m}) +\sum_{i=1}^{k-1}\binom{k}{i}\partial^{k-i}(Q_{-m})\partial^i = 
a_{m+z}\frac{(m+z)!}{(m+z-k)!}x^{m+z-k}\partial^z + \sum_{p>0}a_{m+z+p}'x^{m+z-k+p}\partial^{z+p}\neq 0
\end{multline*}
(here $a_j'\in \tilde{K}$ are appropriate coefficients) -- a contradiction. 
\end{proof}

Now, expanding all brackets in \eqref{E:summand} and using the identities from item 1, we can rewrite $Q$ in the form stated in item 2. The uniqueness of coefficients follows immediately from lemma \ref{T:A.Z L 7.2}.

\end{proof}

Let's recall one notation and definition from \cite{A.Z}. For any $P\in \hat{D}_1^{sym}$ we put 
$$
P_{[q]}:= \frac{x^{q}}{q!}P_{(q)}, \mbox{ where}
$$ 
$$
P_{(q)} = q! \sum_{\substack{{k} \in \sdn_0 \\ {k} - {q} \le  \Ord (P)}} \alpha_{{k}, {q}} \, {\partial}^{{k}}, \quad \alpha_{{k}, {q}}\in K.
$$
The expression $P_{(q)}$ is called a {\it slice} and the sum  $P=\sum_{q\ge 0}P_{[q]}$ is called {\it a partial slice decomposition}.

Consider the space $F=K[\partial ]$. It has a natural structure of a right $\hat{D}_1^{sym}$-module via the isomorphism of vector spaces $F\simeq \hat{D}_1^{sym}/\idm \hat{D}_1^{sym}$.

\begin{Def}{(cf. \cite{A.Z}, Def. 6.4)}
\label{D:regular}
An element $P \in \hat{D}_1^{sym}$ is called \emph{regular} if the $K$--linear map $F \xrightarrow{ (-\circ \sigma(P))} F$ is injective, where $\circ$ means the action on the module $F$. In particular, $P$ is regular if and only if its symbol $\sigma(P)$ is regular.
\end{Def}

\begin{Prop}{(\cite{A.Z},Proposition 7.2)}
	\label{T:A.Z 7.2}
	Let $P\in  \hat{D}^{sym}_{1}$, $\Ord(P)=k>0$ be a regular operator. Then there exists an invertible operator $S\in  \hat{D}^{sym}_{1}$ with $\Ord (S)=0$ such that 
	$$P=S^{-1}\partial^{k}S $$
and $S_{[0]}=1$, $S_{[i]}=0$ for $0< i< k$. 
	
\end{Prop}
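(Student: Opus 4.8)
The plan is to construct $S$ iteratively, homogeneous component by homogeneous component, using the "regularity" hypothesis to solve the linear equations that appear at each stage. Write $S = \sum_{m \le 0} S_m$ with $S_0$ a nonzero scalar (which we normalize to $1$), and rewrite the desired identity $P = S^{-1}\partial^k S$ as $SP = \partial^k S$. Comparing $m$-th homogeneous components on both sides, and recalling that $\sigma(P) = P_{-d}$ with $d = -k < 0$ (so $\Ord(P) = k$ means $P$ has top homogeneous degree $k$) — wait, more carefully: $\Ord(P) = k$, the highest homogeneous component of $P$ is $P_k = \sigma(P)$, and $\partial^k$ is itself homogeneous of degree $k$. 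So the top-degree identity is $S_0 \sigma(P) = \partial^k S_0$, i.e. $\sigma(P) = \partial^k$ after normalizing $S_0 = 1$; but in general $\sigma(P)$ need not equal $\partial^k$, so one first has to observe that regularity together with $\Ord(P)=k$ forces, after an initial adjustment, that we may reduce to the case $\sigma(P)$ is of the expected shape, or rather that the recursion below determines everything. Let me restate: the equation $SP = \partial^k S$ in homogeneous degree $k + t$ (for $t \le 0$) reads
$$
\sum_{a + b = k+t} S_a P_b \;=\; \partial^k S_t \;=\; \sum_{a=k, \, b=t} (\partial^k)(S_b),
$$
which isolates $S_t$ (the unknown of degree $t$) against the known $\sigma(P) = P_k$ on one side and against $\partial^k$ on the other, with all lower-index $S_a$ ($a > t$) already determined. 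The crux is that the operator $X \mapsto \partial^k X - X \sigma(P)$ acting on homogeneous operators of a fixed degree has a cokernel controlled by the map $(-\circ \sigma(P))$ on $F = K[\partial]$, and regularity says precisely this map is injective, hence (by a dimension/duality count on the relevant finite-dimensional graded pieces) the needed equation is solvable for $S_t$.

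The key steps, in order: (1) Reformulate $P = S^{-1}\partial^k S$ as $SP = \partial^k S$ and expand into homogeneous components; set up the recursion for $S_t$, $t = 0, -1, -2, \ldots$, with $S_0 = 1$. (2) Show that at each step the equation for $S_t$ has the form $\partial^k S_t - S_t \sigma(P) = (\text{known operator of degree } k+t \text{ built from } S_0, \ldots, S_{t+1} \text{ and } P_k, P_{k-1}, \ldots)$, and that the right-hand side automatically lies in the image of the left-hand side map because of the regularity of $P$ — this is where one invokes Definition~\ref{D:regular} and the structure of $F$ as a $\hat D_1^{sym}$-module; concretely, one reduces the solvability to injectivity of $(-\circ \sigma(P)): F \to F$ via the identification of homogeneous operators of degree $m$ modulo $\idm(\cdot)$ with finite pieces of $F$. (3) Verify that the resulting $S = \sum_t S_t$ has $\Ord(S) = 0$ (each $S_t$ is genuinely of homogeneous degree $t \le 0$ with $S_0 = 1$, so $\Ord(S) = 0$) and is invertible in $\hat D_1^{sym}$ — invertibility follows since $S_0 = 1$ is a unit and $S - 1$ has order $< 0$, so $S^{-1} = \sum_{j \ge 0}(1 - S)^j$ converges in the order-adic topology. (4) Finally, arrange the normalization $S_{[0]} = 1$ and $S_{[i]} = 0$ for $0 < i < k$: the partial slice decomposition $S = \sum_q S_{[q]}$ with $S_{[q]} = \frac{x^q}{q!} S_{(q)}$ gives extra freedom — multiplying $S$ on the left by a suitable invertible operator in the centralizer $C(\partial^k)$ (which by Proposition~\ref{T:A.Z 7.1}(2) consists of polynomials in $\partial, \int, A_i$) does not change $S^{-1}\partial^k S$, and one uses this freedom to kill the slices $S_{[i]}$ for $0 < i < k$ and to normalize $S_{[0]} = 1$; this is exactly the content of the argument in \cite{A.Z}, Proposition 7.2, adapted to dimension one.

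I expect the main obstacle to be step (2): proving that the inhomogeneous term on the right-hand side of the recursion genuinely lies in the image of the "commutator-type" map $X \mapsto \partial^k X - X\sigma(P)$ on each graded piece. The clean way is to pass to the module $F = K[\partial] \cong \hat D_1^{sym}/\idm\hat D_1^{sym}$: reducing the degree-$(k+t)$ equation modulo $\idm(\cdot)$ turns it into a statement about the single linear map $(-\circ\sigma(P))$ on $F$, whose injectivity is the regularity hypothesis; injectivity on the relevant finite-dimensional graded component is then equivalent to surjectivity there (equal finite dimensions), which yields solvability, and one lifts the solution back to $\hat D_1^{sym}$ degree by degree in the $x$-adic filtration within the fixed homogeneous degree. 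Keeping track of the two filtrations simultaneously — the homogeneous grading by $\Ord$ and the $\idm$-adic filtration inside each homogeneous component — is the technical heart, but it is exactly parallel to the proof of Theorem~\ref{T:Schur_theorem} (the generalized Schur theorem) specialized to $n = 1$, where the freeness/finite-generation hypothesis is automatic.
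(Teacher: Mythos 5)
Your recursion runs along the wrong decomposition at the decisive points, and the two places where all the content sits are left unproved. First, you normalize the zeroth \emph{homogeneous} component $S_0$ to $1$; comparing top homogeneous components of $SP=\partial^kS$ this forces $\sigma(P)=\partial^k$, which is false for a general regular operator (e.g.\ $P=\partial^k+x\partial^{k+1}$ is regular of order $k$, since $\partial^j\circ\sigma(P)=(1+j)\partial^{j+k}$, yet $\sigma(P)\neq\partial^k$). The normalization in the statement, $S_{[0]}=1$, $S_{[i]}=0$ for $0<i<k$, concerns the \emph{slices} $S_{[q]}=\frac{x^q}{q!}S_{(q)}$, not the $\Ord$-homogeneous components; you conflate the two, and the degree-zero equation $S_0\,\sigma(P)=\partial^kS_0$, already a nontrivial ``symbol'' version of the whole problem, is never solved. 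Second, for $t<0$ you must solve $\partial^kS_t-S_t\sigma(P)=M_t$ in the space of homogeneous operators of degree $t$, and you justify solvability by ``injectivity implies surjectivity on finite-dimensional graded pieces''. These pieces are \emph{infinite}-dimensional (a homogeneous component is a series $\sum_{k-i=m}\alpha_{k,i}x^i\partial^k$), so that count does not apply, and the reduction to injectivity of $-\circ\sigma(P)$ on $F$ is only asserted. The paper's own machinery shows such surjectivity claims are delicate even in the easiest case $\sigma(P)=\partial^k$: lemma \ref{L:ODE sol 2} needs the right-hand side to be free of $B_j$, and remark \ref{R:ODE sol} exhibits the unsolvable instance $[\partial^4,H]=B_{10}\partial^4$. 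Finally, your step (4) obtains the slice normalization by appealing to ``the argument of \cite{A.Z}, Proposition 7.2'', i.e.\ to the statement being proved, so as written it is circular.

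For comparison, the proof this paper relies on (see the recollection inside the proof of proposition \ref{P:normalized_Schur}) runs the recursion along the slice decomposition: one prescribes $S_{[0]}=1$, $S_{[i]}=0$ for $0<i<k$ and determines each new slice $S_{[p+k]}$ uniquely from $(\partial^kS)_{[p]}=(SP)_{[p]}$, since the unknown enters only through $\partial^k(S_{[p+k]})$ while $(SP)_{[p]}$ involves only the already known slices $S_{[0]},\dots,S_{[p]}$; no solvability obstruction arises at any step, and the substance is in showing that the resulting $S$ lies in $\hat{D}_1^{sym}$ with $\Ord(S)=0$ and is invertible, which is exactly where regularity is used (for a non-regular $P$ the formal slice solution still exists, but no invertible order-zero $S$ can exist, since conjugation by such an $S$ preserves regularity of the symbol). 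If you want to salvage your graded approach you would have to genuinely prove, using regularity, that $X\mapsto\partial^kX-X\sigma(P)$ hits the specific right-hand sides produced by the recursion, and separately handle the degree-zero step; that is a substantial lemma, not a dimension count.
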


\begin{ex}
\label{Ex:normalised_dif_operators}
It's easy to see that an operator $P\in D_1$ with an invertible highest coefficient (cf.remark \ref{R:ord=deg}) is an example of a regular operator. 

Recall that any such operator can be normalised, i.e. reduced to the form $P=\partial^k + c_{k-2}\partial^{k-2}+\ldots + c_0$, with the help of some change of variables and conjugation by invertible function, see e.g. \cite[Prop. 1.3 and Rem. 1.6]{BZ}.
\end{ex}

\begin{Prop}
	\label{P:normalized_Schur}
	Let $P\in D_1$ be a normalized operator of positive degree, i.e. $P=\partial^k + c_{k-2}\partial^{k-2}+\ldots + c_0$, $k>0$. Then there exists an operator $S$ from proposition \ref{T:A.Z 7.2} such that $S_0=1$, $S_{-1}=0$. 
\end{Prop}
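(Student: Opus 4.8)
The plan is to verify that the very operator $S$ supplied by Proposition~\ref{T:A.Z 7.2} (which applies: $P$ has invertible highest coefficient, hence is regular by Example~\ref{Ex:normalised_dif_operators}) already satisfies $S_0=1$ and $S_{-1}=0$, so that no further adjustment is needed. The first step is to unravel the normalisation conditions on $S$. Since $\Ord(S)=0$, every monomial $x^b\partial^a$ of $S$ has $a\le b$, and the conditions $S_{[0]}=1$, $S_{[i]}=0$ for $0<i<k$ say exactly that the coefficient of $x^0\partial^0$ equals $1$ while no monomial $x^i\partial^a$ with $0<i<k$ occurs; equivalently $S=1+R$ with $R$ supported on monomials $x^b\partial^a$ with $b\ge k$. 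Taking homogeneous components, $S_0=1+\sum_{b\ge k}\alpha_{b,b}\,x^b\partial^b$ and $S_{-1}=\sum_{b\ge k}\alpha_{b-1,b}\,x^b\partial^{b-1}$. The second step extracts information from $SP=\partial^kS$: comparing components of top degree $k$ (top since $\sigma(S)$ is invertible and $\sigma(P)=\partial^k$) gives $S_0\partial^k=\partial^kS_0$, and comparing components of degree $k-1$ — where $P_{k-1}=0$ by the normalisation of $P$ (a monomial $x^a\partial^b$ of $P$ with $b-a=k-1$ would need $b=k$, $a=1$, which $\partial^k$ does not contain), so the only surviving left‑hand term is $S_{-1}\partial^k$ — gives $S_{-1}\partial^k=\partial^kS_{-1}$. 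Thus $S_0,S_{-1}\in C(\partial^k)$.

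I would then pin down $S_0$. By Proposition~\ref{T:A.Z 7.1}(2), together with homogeneity of $S_0$ and $\Ord(S_0)=0$, we may write $S_0=\sum_{i=0}^{k-1}c_iA_i$ with $c_i\in\tilde K$. Since the coefficient of $x^b\partial^b$ in $A_i$ is $(\xi^i-1)^b/b!$, matching $S_0$ against $1+\sum_{b\ge k}\alpha_{b,b}x^b\partial^b$ in degrees $0\le b\le k-1$ yields $\sum_i c_i=1$ and, as $\xi^0-1=0$, $\sum_{i=1}^{k-1}c_i(\xi^i-1)^b=0$ for $b=1,\dots,k-1$. The numbers $\xi^1-1,\dots,\xi^{k-1}-1$ being distinct and nonzero, the relevant Vandermonde matrix is invertible, so $c_1=\dots=c_{k-1}=0$ and then $c_0=1$; that is, $S_0=1$.

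Finally I would show $S_{-1}=0$. By Proposition~\ref{T:A.Z 7.1} (items~(2) and~(1)) and homogeneity, $S_{-1}=(\sum_{i=0}^{k-1}c_iA_i)\int$ with $c_i\in\tilde K$. From $\partial\int=1$ and $\int\partial=1-\delta$ one computes $[\partial^k,\int]=\partial^{k-1}-(1-\delta)\partial^{k-1}=\delta\partial^{k-1}$, a nonzero operator (it sends $x^{k-1}$ to $(k-1)!$ and annihilates all other monomials). Since each $A_i$ commutes with $\partial^k$ and $A_i\delta=\delta$, this gives $[\partial^k,S_{-1}]=(\sum_i c_iA_i)\,\delta\partial^{k-1}=(\sum_i c_i)\,\delta\partial^{k-1}$, so $S_{-1}\in C(\partial^k)$ forces $\sum_i c_i=0$. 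On the other hand $A_i\circ x^n=\xi^{in}x^n$ and $\int\circ x^m=x^{m+1}/(m+1)$, hence $S_{-1}\circ x^m=\frac{1}{m+1}(\sum_i c_i\xi^{i(m+1)})\,x^{m+1}$; since $S_{-1}$ is supported on $x^b\partial^{b-1}$ with $b\ge k$ we have $S_{-1}\circ x^m=0$ for $m=0,\dots,k-2$, i.e. $\sum_i c_i\xi^{ir}=0$ for $r=1,\dots,k-1$. Together with $\sum_i c_i=0$, the discrete Fourier transform of $(c_0,\dots,c_{k-1})$ vanishes identically, so all $c_i=0$ and $S_{-1}=0$.

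The genuinely delicate points are the two non‑degeneracy arguments — turning the support constraint on $S_0$ (resp.\ $S_{-1}$) into the Vandermonde system (resp.\ the full discrete‑Fourier system) — and the elementary identity $[\partial^k,\int]=\delta\partial^{k-1}\ne 0$: it is exactly this relation that supplies the extra equation $\sum_i c_i=0$, without which a nonzero $S_{-1}$ supported in $x$‑degree $\ge k$ (for example a multiple of $(\sum_i A_i)\int$) would be compatible with the normalisation of $S$. The rest is bookkeeping with the homogeneous decomposition and with the commutation identities of Proposition~\ref{T:A.Z 7.1}(1).
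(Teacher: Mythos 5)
Your proof is correct, but it takes a genuinely different route from the paper. The paper re-runs the recursive construction of $S$ from \cite[Prop.~7.2]{A.Z}: it determines the slices from $(\partial^k S)_{[p]}=(SP)_{[p]}$ and shows by induction that $\Ord(S_{[k+p]})\le -2$ for all $p\ge 0$, using that normalization of $P$ forces $\Ord(P_{[p]})<k-2$ for $p>0$; the absence of order $0$ (beyond $1$) and order $-1$ homogeneous terms then falls out of these order bounds. You instead treat the operator of proposition \ref{T:A.Z 7.2} as a black box and prove an a priori rigidity statement: comparing the two top homogeneous components of $SP=\partial^kS$ (the normalization enters only through $P_{k-1}=0$) puts $S_0$ and $S_{-1}$ into $C(\partial^k)$; then the centralizer description of proposition \ref{T:A.Z 7.1}, combined with the support constraints $S_{[0]}=1$, $S_{[i]}=0$ for $0<i<k$, reduces everything to two nondegenerate linear systems (a Vandermonde system in the nodes $\xi^i-1$ for $S_0$, and the full discrete Fourier system for $S_{-1}$, where the extra equation $\sum_i c_i=0$ comes from $[\partial^k,\int]=\delta\partial^{k-1}\neq 0$). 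Both arguments are sound; the paper's buys slightly more information (an algorithm for $S$ together with the bound $\Ord(S_{[k+p]})\le -2$ on every slice), while yours buys a stronger conclusion (every Schur operator with the slice normalization of proposition \ref{T:A.Z 7.2} automatically satisfies $S_0=1$, $S_{-1}=0$) and anticipates, in the special cases of orders $0$ and $-1$, exactly the centralizer computations the paper records later in lemmas \ref{T:A.Z L 7.2} and \ref{L:homog_comm} (your relation $\sum_i c_i=0$ is the $q=1$ condition there). One cosmetic remark: in your justification that $P_{k-1}=0$ you should also note explicitly that monomials $x^0\partial^{k-1}$ are excluded because the normalized $P$ has no $\partial^{k-1}$ term, and that all terms $c_j\partial^j$ with $j\le k-2$ have order at most $k-2$; as written you only rule out $x^1\partial^k$.
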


\begin{proof}
	The proof will follow the proof of Prop. 7.2 in \cite{A.Z}. $P$ is regular, since $\sigma (P)=\partial^k$.  Recall that there exists $S$ such that $S_{[0]}=1$, $S_{[i]}=0$ for $0< i < k$ and each slice can be found from the system
	$$
	(\partial^kS)_{[p]}=(SP)_{[p]}, \quad p\ge 0.
	$$ 
	Note that 
	$$
	(\partial^kS)_{[p]}=\partial^k(S_{[p+k]}) + \sum_{j=0}^{p+k-1}(\partial^kS_{[j]})_{[p]}
	$$ 
	and $(SP)_{[p]} = ((S_{[0]}+\ldots +S_{[p]})P)_{[p]}$. Then for $p=0$ we have
	$$
	(\partial^kS)_{[0]}=\partial^k(S_{[k]})+\partial^k=P_{[0]},
	$$
	and therefore the slice $S_{(k)}$ is uniquely determined; besides, $\Ord (S_{[k]})= \Ord (P- \partial^k)-k\le -2$. 
	
	Now we can use induction on $p$. By induction, we can assume $\Ord (S_{[k+j]})\le -2$ for all $j<p$.  
	Note that for $p>0$ we have $\Ord (P_{[p]})< k-2$ because $P$ is normalized. Then $\Ord ((SP)_{[p]})\le k-2$. On the other hand, we have $\Ord ((\partial^kS_{[j]})_{[p]})\le k-2$ for all $j<p+k$ (by induction). From the equation above the slice $S_{(k+p)}$ is uniquely determined as 
	$$
	\partial^k(S_{[k+p]})= (SP)_{[p]}-\sum_{j=0}^{p+k-1}(\partial^kS_{[j]})_{[p]},
	$$
	and therefore $\Ord (S_{[k+p]}) \le -2$. 
	
	Therefore, the homogeneous decomposition of $S$ has no homogeneous terms of order $-1$, and $S_{-1}=0$.
	
\end{proof}

\subsection{Basic  formulae in $\hat{D}_1^{sym}$}
\label{S:Basic  formulae in}

In this section we collect useful commutation relations between operators in the ring $\hat{D}_1^{sym}\hat{\otimes}_K\tilde{K}$ which will be used later. 

First we define a series of operators $B_i$:  $B_{1}:=\delta, B_{2}:=x\delta \partial ,\ldots, B_{n}:=\frac{1}{(n-1)!}x^{n-1}\delta \partial^{n-1}$, and  $B_j:=0$ for any $j\leq 0$. Define $\Gamma_i=(x\partial)^i$ for $i\ge 0$, and for $i<0, \Gamma_i=0$. For convenience, we introduce also a new notation: for any integer $n$ we set  $D^n=\partial^n$ if $n\ge 0$ and $D^n=\int^{-n}$  otherwise. Obviously, we have $\partial \delta=\delta x=0$ and $\Ord(B_{j})=0$ for any $j \in \mathbb{N}$.  From this observation and definition of $\delta$ we also get $\delta\delta=\delta$. 

\begin{lemma}
	\label{L:1}
	For a fixed $k\in \dn$ let $A_i=A_{k;i}$, $\xi$ be the $k$-primitive root, $B_j$ are defined as above. Then we have
	\begin{enumerate}
		\item $A_{i} B_{j}=B_{j} A_{i}=\xi^{i(j-1)}B_{j}$ for any $i,j \in \mathbb{N}$;
		\item 
	$$
\int x^m = \frac{m!}{(m+1)!}x^{m+1} + \sum_{i=1}^{\infty}(-1)^i \frac{m!}{(m+i+1)!}x^{m+i+1}\partial^i;
$$
	In particular, $\int x^m\delta = \frac{1}{m+1}x^{m+1}\delta$;
		\item $\int^{m} \partial^{m}=1-\sum_{k=1}^{m}B_{k}$ for any $m \in \mathbb{N}$;
		\item 
		$$\int^u f(x)=f(x)\int^u +
		\sum_{l=1}^{\infty}\binom{-u}{l}f(x)^{(l)}\int^{u+l}$$ for any  $f(x)\in \hat{R}$, $u\in\dn$;
		
		\item $B_i B_j=\delta_i^jB_j$, where $\delta_i^j$ is the Kronecker delta;
		
		\item $A_i\Gamma_j=\Gamma_jA_i$;
		\item $$
		D^i\Gamma_j=\sum_{l=0}^{j}\binom{j}{l}i^{j-l}\Gamma_lD^i, \quad \Gamma_jx^i=x^i(\sum_{l=0}^j\binom{j}{l}i^{j-l}\Gamma_l)
		$$
		\item $\Gamma_iB_j=B_j\Gamma_i=(j-1)^iB_j$;
		\item $D^u B_j=B_{j-u}D^u$. In particular, $D^u B_j=B_{j-u}D^u=0$ if $u>0$ and $j-u\le 0$ or $u<0$ and $j\le 0$.
	\end{enumerate}	
	where we assume in all these formulae that $0^0:=1$.
\end{lemma}

\begin{proof} 	
	1 can be directly calculated:
	$$
	A_{i}B_{j}=\frac{1}{(j-1)!}A_{i}x^{j-1}\delta \partial^{j-1}=\frac{1}{(j-1)!}\xi^{i(j-1)}x^{j-1}A_{i}\delta \partial^{j-1}=\frac{1}{(j-1)!}\xi^{i(j-1)}x^{j-1}\delta \partial^{j-1}=\xi^{i(j-1)}B_{j},
	$$ 
	$$
	B_{j}A_{i}=\frac{1}{(j-1)!}x^{j-1}\delta \partial^{j-1}A_{i}=\frac{1}{(j-1)!}\xi^{i(j-1)}x^{j-1}\delta A_{i}\partial^{j-1}=\frac{1}{(j-1)!}\xi^{i(j-1)}x^{j-1}\delta \partial^{j-1}=\xi^{i(j-1)}B_{j}.
	$$ 
	
	2. The proof is by induction on $m$. 
For $m=0$ it is true by definition of $\int$. For generic $m$ we have
\begin{multline*}
\int x^m = (\int x^{m-1})x= (\frac{(m-1)!}{m!}x^{m+1} + \sum_{i=1}^{\infty}(-1)^i \frac{(m-1)!}{(m+i)!}x^{m+i+1}\partial^i )+ \\
(\sum_{i=1}^{\infty}(-1)^i \frac{(m-1)!i}{(m+i)!}x^{m+i}\partial^{i-1})=
\frac{m!}{(m+1)!}x^{m+1} + \sum_{i=1}^{\infty}(-1)^i \frac{m!}{(m+i+1)!}x^{m+i+1}\partial^i.
\end{multline*}

	3. The proof is by induction on $m$.  When $m=1$, it is obvious. Suppose it has been done when $i<m$. Then
\begin{multline*}
	\int^{m} \partial^{m}=\int \int^{m-1} \partial^{m-1} \partial=\int(1-\sum_{k=1}^{m-1}B_{k})   \partial
\\  =\int(1-\sum_{k=1}^{m-2}B_{k})   \partial-\int(B_{m-1})  \partial=\int \int^{m-2} \partial^{m-2} \partial-\int(B_{m-1})  \partial
\\	=1-\sum_{k=1}^{m-1}B_{k}-\int (\frac{1}{(m-2)!}x^{m-2}\delta \partial^{m-2})\partial=1-\sum_{k=1}^{m-1}B_{k}-B_{m}.
\end{multline*}

	4. Note that the equality $\int x=x\int-\int^2$ holds iff $\int x\partial^q=(x\int-\int^2)\partial^q$ holds for any $q\ge 0$ (as it follows from definition of the ring $\hat{D}_1^{sym}$). Take $q=2$. Then we have 
	$$
\int x\partial^2=\int \partial^2x - 2\int \partial = (1-B_1)\partial x-2(1-B_1)= x\partial -1 +B_1
	$$
	On the other hand, 
	$$
(x\int-\int^2)\partial^2=x(1-B_1)\partial - (1-B_1-B_2)=x\partial -1+B_1.
	$$
	So, $\int x\partial^2=(x\int-\int^2)\partial^2$ and therefore $\int x=x\int-\int^2$. The second formula follows immediately by induction  for $f(x)=x^l$, and for generic series $f(x)=\sum_l c_lx^l$ the left and right hand side of the formula are just sums of homogeneous operators $\int^uc_lx^l$ (note that each such summand  is homogeneous of order $(-u-l)$ and therefore the total sum is well defined for any series $f(x)$)\footnote{This formula coincides with analogous formula for the classical ring of pseudo-differential operators, cf. e.g. \cite[Def. 4.1]{Zheglov_book}}. 
	
	5. Notice that $\partial^i x^j$ has a constant term  only when $i=j$. By this reason $B_iB_j=0$ if $i\neq j$, and 
$$
	B_iB_i=\frac{1}{(i-1)!}x^{i-1}\delta \partial^{i-1}\frac{1}{(i-1)!}x^{i-1}\delta \partial^{i-1}=\frac{1}{(i-1)!}x^{i-1}\delta\delta \partial^{i-1}=B_i .
$$
	
	6. $$
		A_i (x\partial)^j=(x\partial)A_i(x\partial)^{j-1}=s=(x\partial)^jA_i
		$$
	
	7. We have 
	\begin{multline*}
	\int(x\partial)=(\int x)\partial=(x\int-\int^2)\partial=x(1-\delta)-\int(1-\delta)
	=x-\int -x\delta+\int\delta=x-\int=(x\partial-1)\int .
	\end{multline*}
	Hence $$
		D^i(x\partial)=(x\partial+i)D^i, \quad (x\partial)x^i=x^i(x\partial+i). 
		$$
		and we have
		$$
		D^i\Gamma_j=(x\partial+i)^jD^i=\sum_{l=0}^{j}\binom{j}{l}i^{j-l}(x\partial)^lD^i, \quad \Gamma_jx^i=x^i(x\partial+i)^j=x^i(\sum_{l=0}^{j}\binom{j}{l}i^{j-l}(x\partial)^l).
		$$
	
	8. First note that
\begin{multline*}
	(x\partial)B_j=(x\partial)\frac{1}{(j-1)!}x^{j-1}\delta\partial^{j-1}
	=(j-1)\frac{1}{(j-1)!}x^{j-1}\delta\partial^{j-1}+\frac{1}{(j-1)!}x^{j}\partial\delta\partial^{j-1}
	=(j-1)B_j+0
\end{multline*}
	Analogously, $B_j(x\partial)=(j-1)B_j$. Hence 
	$$
		\Gamma_iB_j=(x\partial)^iB_j=(j-1)(x\partial)^{i-1}B_j=(j-1)^iB_j .
	$$
	
	9. Like before, notice that we have 
	$
	\partial B_j=B_{j-1}\partial 
	$
	and 
	$
	\int B_j=B_{j+1}\int .
	$
\end{proof}

The next Lemma can be deduced from the proof of \cite[Prop.7.1]{A.Z}. Another proof, by using our calculations above is as follows:

\begin{lemma}
	\label{L:homog_comm}
	Let $P\in \hat{D}_1^{sym}\hat{\otimes}_K\tilde{K}$ be a homogeneous operator commuting with $\partial^k$ and $-k<\Ord (P)=l<0$ (if $\Ord (P)\le -k$ then $P=0$ by proposition \ref{T:A.Z 7.1}, item 2). Then 
	$$
	P=(\sum_{j=0}^{k-1}c_jA_j)\int^{-l},
	$$
	where $c_j\in \tilde{K}$ satisfy the following conditions:
	$$
	\sum_{j=0}^{k-1}c_j=0, \quad \sum_{j=1}^{k-1}c_j (\xi^j-1)^q=0, \quad 1\le q\le -l-1,
	$$
	or, equivalently, 
	$$
	\sum_{j=0}^{k-1}c_j\xi^{j(q-1)}=0 \quad \mbox{for\quad } q=1, \ldots , -l.
	$$
	Vice versa, any such operator $P$ commutes with $\partial^k$.
\end{lemma}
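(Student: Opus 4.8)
\textbf{Proof plan for Lemma \ref{L:homog_comm}.}

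The plan is to derive the shape of $P$ directly from Proposition \ref{T:A.Z 7.1}, item 2, and then to translate the commutation relation $[\partial^k, P] = 0$ into the stated linear conditions on the coefficients $c_j$. First I would apply Proposition \ref{T:A.Z 7.1} to $P$: since $[\partial^k, P] = 0$ and $P$ is homogeneous of order $l$ with $-k < l < 0$, the expression $c_i = \sum_{m=0}^{\Ord(P)} c_{i,m}\partial^m + c_{i,-1}\int + \ldots + c_{i,-k+1}\int^{k-1}$ must itself be homogeneous of order $l$ after being multiplied on the left by $A_i$ (which has order zero). Because $A_i$ is invertible of order zero and the homogeneous decomposition is unique (by Lemma \ref{T:A.Z L 7.2} and the discussion in the proof of Proposition \ref{T:A.Z 7.1}), each $c_i$ must be homogeneous of order $l$, hence of the form $c_i = c_{i}\int^{-l}$ with $c_i \in \tilde{K}$ a scalar (here $-l \le k-1$, which is exactly why such a term is available). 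Using $A_i \int^{-l} = \xi^{-l\,i} \int^{-l} A_i$ from item 1 of Proposition \ref{T:A.Z 7.1} — or simply keeping the $\int^{-l}$ on the right after absorbing the scalar — gives $P = \left(\sum_{j=0}^{k-1} c_j A_j\right)\int^{-l}$ as claimed.

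Next I would impose the condition that this $P$ actually lies in $\hat{D}_1^{sym}$, i.e. that it contains only non-negative powers of $\partial$ when written in canonical form. Expanding $A_j = \exp((\xi^j - 1)x\ast\partial) = \sum_{t\ge 0}\frac{(\xi^j-1)^t}{t!} x^t\partial^t$ and multiplying by $\int^{-l} = \sum_{s\ge 0}\frac{x^{s-l}}{\cdots}(-\partial)^{\cdots}$ (more precisely using $\int^{-l}\circ x^m$-type formulae, or just the product expansion), one sees that the coefficient of $x^{-l}\partial^0$, and more generally the coefficients of $x^{-l+r}\partial^r$ for $0 \le r \le -l-1$, involve the sums $\sum_{j} c_j (\xi^j-1)^t$ for $t = 0, 1, \ldots, -l-1$; requiring these "negative-$\partial$" contributions to vanish (they would otherwise produce terms $x^{-l}\partial^{<0}$, impossible in $\hat{D}_1^{sym}$) yields exactly $\sum_{j=0}^{k-1} c_j = 0$ and $\sum_{j=1}^{k-1} c_j(\xi^j-1)^q = 0$ for $1 \le q \le -l-1$. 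The equivalence with $\sum_{j=0}^{k-1} c_j \xi^{j(q-1)} = 0$ for $q = 1, \ldots, -l$ is a routine linear-algebra manipulation: the two systems of linear forms in $(c_0,\ldots,c_{k-1})$ span the same subspace, since $\{(\xi^j-1)^q : 0\le q\le -l-1\}$ and $\{\xi^{j(q-1)} : 1\le q\le -l\}$ are related by an invertible triangular change of basis (binomial expansion of $(\xi^j-1)^q$), together with the observation that $(\xi^j - 1)^0 = 1$ handles the $q=1$ row and for $j=0$ one has $\xi^0 = 1$ so the $j=0$ term drops out of all conditions with $q \ge 2$.

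For the converse, I would run the same computation backwards: given scalars $c_j$ satisfying the stated conditions, the expansion above shows that $P = \left(\sum_j c_j A_j\right)\int^{-l}$ has no terms with negative powers of $\partial$, hence genuinely defines an element of $\hat{D}_1^{sym}$ (homogeneous of order $l$), and then $[\partial^k, P] = 0$ follows either from a direct check using $\partial^k A_j = \xi^{kj} A_j \partial^k = A_j\partial^k$ (since $\xi^k = 1$) and $\partial^k \int^{-l} = \int^{-l}\partial^k$ modulo lower-order $\delta$-corrections — which vanish precisely because of the coefficient conditions — or, more cleanly, by invoking Proposition \ref{T:A.Z 7.1}, item 2 in reverse. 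I expect the main obstacle to be the bookkeeping in the second paragraph: carefully identifying which coefficients of the product $\left(\sum_j c_j A_j\right)\int^{-l}$ must vanish, and checking that exactly the first $-l$ of the associated linear conditions (and no more) are forced, so that the $\int^{-l}$-term survives with a nonzero scalar multiple. The cleanest route to control this is probably to work, as in the proof of Proposition \ref{T:A.Z 7.1}, in the commutative ring $\tilde{R} = \tilde{K}[[x]]((\tilde{\partial}))$ where $\int$ becomes $\tilde{\partial}^{-1}$ and the condition "no negative powers of $\partial$" becomes divisibility by $\tilde{\partial}^{\,-l}$ of the series $\sum_j c_j \exp((\xi^j-1)x\tilde{\partial})$ after clearing denominators — divisibility by $\tilde{\partial}^{\,-l}$ of an exponential sum is a transparent finite set of conditions on the low-order Taylor coefficients, which are exactly the sums $\sum_j c_j(\xi^j-1)^q$.
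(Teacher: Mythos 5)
Your first step (pinning down the shape $P=(\sum_j c_jA_j)\int^{-l}$ from Proposition \ref{T:A.Z 7.1}, item 2, using homogeneity and $1\le -l\le k-1$) is fine and agrees with the paper. The genuine gap is in your second step: you derive the linear conditions on the $c_j$ from the requirement that $P$ "contains only non-negative powers of $\partial$", i.e.\ that $P$ lies in $\hat{D}_1^{sym}$. That requirement is vacuous: $\int$ is itself an element of $\hat{D}_1^{sym}$ (the series $\sum_{t\ge 0}\frac{x^{t+1}}{(t+1)!}(-\partial)^t$ has no negative powers of $\partial$), so $(\sum_j c_jA_j)\int^{-l}$ is a legitimate element of $\hat{D}_1^{sym}$ for arbitrary scalars $c_j$, and no condition is forced. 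Concretely, for $k=2$, $l=-1$, $c_0=1$, $c_1=0$ the operator $P=\int$ has a perfectly good canonical form with only non-negative powers of $\partial$, yet $[\partial^2,\int]=\delta\partial\neq 0$ and $c_0+c_1\neq 0$; your criterion would accept it. The picture in which "no negative powers" forces $\sum_j c_j(\xi^j-1)^t=0$ is correct only for the series $\sum_j c_j\exp((\xi^j-1)x\tilde{\partial})\tilde{\partial}^{\,l}$ in the commutative model $\tilde{R}$, where $\int$ really is $\tilde{\partial}^{-1}$; but identifying the actual operator $P$ with that series is precisely what requires the hypothesis $[\partial^k,P]=0$ (it is the ODE argument inside the proof of Proposition \ref{T:A.Z 7.1}). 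As written, your forward direction never uses the commutation relation, so it establishes nothing.

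What closes the gap -- and is in fact the paper's proof of both directions simultaneously -- is the direct bracket computation that you only sketch for the converse: by Lemma \ref{L:1}, $\int^{-l}\partial^{-l}=1-\sum_{q=1}^{-l}B_q$ and $A_jB_q=\xi^{j(q-1)}B_q$, while $\partial^kA_j=A_j\partial^k$ and $\partial^k\int^{-l}=\partial^{k+l}$, so
$[P,\partial^{k}]=-\sum_{q=1}^{-l}\bigl(\sum_{j=0}^{k-1}c_j\xi^{j(q-1)}\bigr)B_q\partial^{k+l}$;
since the operators $B_q\partial^{k+l}$ are nonzero and linearly independent, $[P,\partial^k]=0$ is equivalent to the stated conditions, in both directions at once. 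Note also that your alternative route for the converse, "invoking Proposition \ref{T:A.Z 7.1}, item 2 in reverse", does not work: that proposition gives a necessary form for centralizer elements, not a sufficient one (again, $\int$ has the allowed form but does not commute with $\partial^k$). Your observation that the two displayed systems of conditions are related by an invertible triangular (binomial) change of basis is correct.
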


\begin{proof}
	By proposition \ref{T:A.Z 7.1}, item 2 $P$ has the form as it is claimed and we only need to prove the relations between coefficients $c_j$. 
	Since the order of $P$ is negative, we have $P_{[q]}=0$ for $q=0, \ldots , -l-1$. On the other hand, using lemma \ref{L:1}, we have
	$$
	0=[P,\partial^{k}]= (\sum_{j=0}^{k-1}c_jA_j)(1-\sum_{q=1}^{-l}B_{q})\partial^{k+l}-(\sum_{j=0}^{k-1}c_jA_j)\partial^{k+l}= -\sum_{j=0}^{k-1}\sum_{q=1}^{-l}c_j\xi^{j(q-1)}B_q\partial^{k+l},
	$$
	hence $\sum_{j=0}^{k-1}c_j\xi^{j(q-1)}=0$ for $q=1, \ldots , -l$. 
	
	The same calculation proves the last assertion.
\end{proof}

\subsection{Homogeneous canonical polynomials}
\label{S:HCP}

In this section we fix some $k\in \dn$. Assume $\xi$ is the $k$-primitive root, $A_i=A_{k,i}$, $B_j,\Gamma_i$ are the same as in the previous section.  

The following definition is motivated by proposition \ref{T:A.Z 7.1} and some calculations below. 

\begin{Def}
\label{D:HCP}
Let $\tilde{K}=K[\xi]$. An element $H\in \hat{D}_{1}^{sym}\hat{\otimes}_K\tilde{K}$ is  called {\it homogeneous canonical polynomial  (in short of HCP)}  if $H$ can be written in the form
	\begin{equation}
	\label{E:HCP}
	H=[\sum_{0\leq i<k}f_{i;r}(x, A_{k;i}, \partial )+\sum_{0<j\leq N}g_{j;r}B_{j}]D^{r}
	\end{equation}
	for some $N\in\dn$, $r\in \dz$, where 
	
		1. $f_{i;r}(x, A_{k;i}, \partial)$ is a polynomial of $x, A_{k;i},\partial$,  $\Ord(f_{i;r})=0$,  of the form
		$$
		f_{i;r}(x, A_{k;i}, \partial )=\sum_{0\leq l\leq d_{i}}f_{l,i;r}x^l A_{k;i}\partial^l
		$$
		for some $d_i\in \dz_+$, where $f_{l,i;r}\in \tilde{K}$. The number  $d_i$ is called the {\it $x$-degree of $f_{i;r}$}:  $deg_{x}(f_{i;r}):= d_i$.
		
		2. $g_{j;r}\in \tilde{K}$.
		
		3. $g_{j;r}=0$ for $j\le -r$ if $r<0$.

In particular, $H$ is homogeneous and $\Ord (H)=r$. 
\end{Def}

Using the results of previous section, it can be shown that the form \eqref{E:HCP} of HCP from definition is uniquely defined. Namely, this follows from lemma:

\begin{lemma}
\label{L:correct_HPC}
Let $H$ be a HCP. Then $H=0$ iff $g_{j;r}=0$ for all $j$ and $f_{l,i;r}=0$ for all $i,l$. 

In particular, any HCP can be uniquely written in the form \eqref{E:HCP}.
\end{lemma}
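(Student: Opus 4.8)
The plan is to reduce the statement to the linear-independence results already available, namely Lemma \ref{T:A.Z L 7.2} and Proposition \ref{T:A.Z 7.1}, item 2, together with the basic commutation relations of Lemma \ref{L:1}. Since the ``in particular'' clause is immediate once the first assertion is proved (if two HCP representations of the same operator differed, their difference would be a nonzero HCP with all coefficients zero, a contradiction), the whole content is: a HCP $H$ written as in \eqref{E:HCP} vanishes if and only if all $g_{j;r}=0$ and all $f_{l,i;r}=0$. One direction is trivial, so the work is to show that $H=0$ forces all coefficients to vanish.

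First I would multiply on the right by $D^{-r}$. This is legitimate because $D^r D^{-r}$ is either $1$ (when $r\le 0$, using $\partial^{-r}\int^{r}$... more precisely $\int^{-r}\partial^{-r}=1-\sum B_k$, so one must be a little careful) — so rather than literally inverting, I would argue as follows: $H=0$ implies $H\partial^{s}=0$ for the shift $s$ making $r+s\ge 0$ large, and then analyze $HD^{r+s}$; alternatively, and more cleanly, split off the $B_j$ part. Note that $D^r$ applied to $\sum_j g_{j;r}B_j$ gives (by Lemma \ref{L:1}, item 9) $\sum_j g_{j;r}B_{j-r}D^r$, which after the shift is a sum of $B_\ell$'s times a power of $\partial$; and condition 3 in Definition \ref{D:HCP} guarantees no $B_\ell$ with $\ell\le 0$ survives. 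The operator $\sum_i f_{i;r}(x,A_{k;i},\partial)$ commutes with $\partial^k$ (each $x^lA_{k;i}\partial^l$ does, since $A_{k;i}$ does and $x^l\partial^l=\Gamma$-type expressions commute with $\partial^k$... actually $x^l\partial^l$ commutes with $\partial^k$ because it is a polynomial in $x\partial$). So after the shift, $HD^{r+s}$ is an operator commuting with $\partial^k$, of order $r$, of the form $\big(\sum_i f_{i;r}(x,A_{k;i},\partial)\big)\partial^{r+s} + \big(\sum_{\ell>0} g'_\ell B_\ell\big)\partial^{\,?}$ — and I would separate the two pieces by looking at slices / partial slice decomposition, or by evaluating on the module $F=K[\partial]$.

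The cleanest route, which I expect to be the main technical point, is to exploit that the $B_j$ part acts as a finite-rank operator (each $B_j$ kills everything of $x$-order $\neq j-1$ and $\delta$ picks out the constant term), whereas $\sum_i f_{i;r}(x,A_{k;i},\partial)$, being a nonzero element of $\tilde K + \tilde K A_1 + \cdots + \tilde K A_{k-1}$ after grouping by the ``$x^l\cdot(-)\cdot\partial^l$'' structure, is by Lemma \ref{T:A.Z L 7.2} either zero or of order exactly $0$ and not supported in finitely many $x$-degrees unless it is a polynomial in $\partial$. So the plan is: assume $H=0$; apply the partial slice decomposition $H=\sum_q H_{[q]}$; for each $q$, the slice $H_{(q)}$ receives contributions from the $f$-part (the term with $l=q$ in each $f_{i;r}$) and from at most one $B_j$ (namely $j=q+1$, up to the $D^r$ shift). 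Writing the slice in terms of the $A_i$ via the identity $x^q\partial^q = \prod_{t=0}^{q-1}(x\partial - t)$ expanded in $\Gamma_l$'s, and using that $\delta = $ the difference $1-\sum_k B_k$ relates $B_{q+1}$ to $x^q\partial^q$ modulo higher terms, one gets a system of linear equations in the $f_{l,i;r}$ and $g_{j;r}$; by Lemma \ref{T:A.Z L 7.2} (Vandermonde in the roots $\xi^i$) applied slice by slice, starting from the top slice and inducting downward, all coefficients must vanish.

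The hard part will be bookkeeping the interaction between the $f$-part and the $B$-part within a single slice: a priori a contribution from $g_{j;r}B_j$ and a contribution from $f_{l,i;r}x^lA_{k;i}\partial^l$ could cancel. The resolution is that $B_j$, once expanded in the canonical form $\frac{1}{(j-1)!}x^{j-1}\delta\partial^{j-1}$ and then in powers $x^{j-1+p}\partial^{j-1+p}$, spreads across infinitely many $x$-degrees with a very specific coefficient pattern (coming from $\delta$), which is linearly independent from the single-degree contribution $x^lA_{k;i}\partial^l$ of each $f$; concretely, one compares coefficients of $x^m\partial^m$ for $m$ large (beyond $\max_i d_i$ and beyond all relevant $j$), where only the $B_j$'s contribute, forcing first all $g_{j;r}=0$, and then the remaining relations force all $f_{l,i;r}=0$ by Lemma \ref{T:A.Z L 7.2}. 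I would phrase this last comparison using Lemma \ref{L:1}, item 2 (the explicit expansion $\int x^m = \ldots$, equivalently the expansion of $B_j$), to make the coefficient pattern of the $B$-part fully explicit.
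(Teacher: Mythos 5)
Your overall strategy (reduce to $r=0$, then separate the $f$-part from the $B$-part and finish with a Vandermonde argument via Lemma \ref{T:A.Z L 7.2}) is in the right spirit, but the central separation step as you state it is false. You claim that the slice of $x$-degree $q$ receives contributions only from the terms $f_{q,i;r}x^qA_{k;i}\partial^q$ and from at most one $B_j$, and that for $m$ large, beyond $\max_i d_i$ and all relevant $j$, ``only the $B_j$'s contribute'' to the coefficient of $x^m\partial^m$. This is not so: each $A_{k;i}$ with $i\neq 0$ is itself an infinite series in canonical form, $A_{k;i}=\sum_{n\ge 0}\frac{(\xi^i-1)^n}{n!}x^n\partial^n$, so $x^lA_{k;i}\partial^l=\sum_{n\ge 0}\frac{(\xi^i-1)^n}{n!}x^{l+n}\partial^{l+n}$ contributes to \emph{every} slice of degree $\ge l$, exactly like the $B_j$'s do through $\delta$. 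Hence the comparison at large $m$ does not kill the $g_{j;r}$ first; instead it yields, for every $m$, an identity of the form $\sum_{i\neq 0}P_i(m)(\xi^i-1)^m+Q(m)(-1)^m+(\text{the finitely many }i=0\text{ terms})=0$ with polynomial $P_i,Q$, and to conclude you would need linear independence of quasi-polynomial sequences $m\mapsto P(m)\lambda^m$ for the distinct bases $\lambda\in\{\xi^i-1\}\cup\{-1\}$ --- a genuinely stronger statement than Lemma \ref{T:A.Z L 7.2} (which has constant coefficients), and one you neither state nor prove. A side error in the same passage: $x^l\partial^l$ does \emph{not} commute with $\partial^k$ (indeed $[\partial,x\partial]=\partial$), so the assertion that $\sum_i f_{i;r}(x,A_{k;i},\partial)$ commutes with $\partial^k$ is wrong, and any route through Proposition \ref{T:A.Z 7.1} built on it collapses.

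For comparison, the paper's proof avoids this analytic bookkeeping entirely: after passing to $H'=HD^{-r}$ in G-form (checking that the $f$-coefficients are unchanged because multiplying the bracket by $B_i$'s only produces $B_j$-terms), it multiplies on the \emph{right} by $B_{kn+t+1}$ for $n\gg 0$ and $t=0,\dots,k-1$. Since $B_jB_m=\delta_j^mB_m$ (Lemma \ref{L:1}), the whole $B$-part is annihilated at once, while the $f$-part acts by explicit eigenvalues $\xi^{i(kn+t)}(kn+t)^l$; vanishing of a polynomial in $n$ for infinitely many $n$ plus the Vandermonde matrix in $\xi$ then kills all $f_{l,i;r}$, and a final left multiplication by $B_{j_0}$ kills the $g_{j;r}$. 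If you want to salvage your slice-comparison route you must add and prove the linear-independence statement for polynomial-weighted geometric sequences mentioned above; as written, the proof has a genuine gap.
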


\begin{proof}
Obviously, if all coefficients are equal to zero, then $H=0$. Now assume the converse:
$$
H=[\sum_{0\leq i<k}\sum_{0\leq l\leq d_{i}}f_{l,i;r}x^l A_{k;i}\partial^l+\sum_{0<j\leq N}g_{j;r}B_{j}]D^{r} =0
$$
and some coefficients $g_{j;r}, f_{l,i;r}$ are not equal to zero. Then necessarily $H':=H D^{-r}=0$.

Note that, using lemma \ref{L:1}, we can rewrite the first sum as
$$
\sum_{0\leq i<k}\sum_{0\leq l\leq d_{i}}f_{l,i;r}x^l A_{k;i}\partial^l=\sum_{0\leq i<k}\sum_{0\leq l\leq d_{i}}f_{l,i;r} \xi^{-il} A_{k;i} x^l \partial^l =\sum_{0\leq i<k}\sum_{0\leq l\leq d_{i}}f_{l,i;r}' \xi^{-il} A_{k;i} \Gamma_l, 
$$
where $f_{l,i;r}'\in \tilde{K}$ are some new coefficients, but $f_{d_i,i;r}'=f_{d_i,i;r}$ {\it for all $i$}. 

Next, note that 
$$
H'=[\sum_{0\leq i<k}\sum_{0\leq l\leq d_{i}}f_{l,i;r}' \xi^{-il} A_{k;i} \Gamma_l+\sum_{0<j\leq N'}g_{j;r}'B_{j}] 
$$
for some new $N'\in \dn$, $g_{j;r}'\in \tilde{K}$, but with {\it the same} coefficients $f_{l,i;r}'$. 
Indeed, if $r\ge 0$, then $D^r D^{-r}=1$ and therefore all coefficients of $H'$ are the same. If $r<0$, then by lemma \ref{L:1} $D^r D^{-r}=1-\sum_{i=1}^rB_i$, and by the same lemma any product 
$$
[\sum_{0\leq i<k}\sum_{0\leq l\leq d_{i}}f_{l,i;r}' \xi^{-il} A_{k;i} \Gamma_l+\sum_{0<j\leq N}g_{j;r}B_{j}]B_i
$$
is just a linear combination of some $B_j$. 

Let $d_{I}$ be a maximal $x$-degree, i.e. $f_{d_I,I;r}'$ is the highest non-zero coefficient. Note that for any $0\le t<k$ and for any $n\gg 0$ we have by lemma \ref{L:1}
\begin{multline*}
0=[\sum_{0\leq i<k}\sum_{0\leq l\leq d_{i}}f_{l,i;r}' \xi^{-il} A_{k;i} \Gamma_l+\sum_{0<j\leq N'}g_{j;r}'B_{j}]B_{kn+t+1}=
\sum_{0\leq i<k}\sum_{0\leq l\leq d_{i}} f_{l,i;r}' \xi^{-il} \xi^{it} (kn+t)^l B_{kn+t+1}
\end{multline*}
where
$$
\sum_{0\leq i<k}  f_{d_I,i;r}' \xi^{i(t-d_I)}=0, \quad 0\le t<k
$$
(where we assume $f_{d_I,i;r}'=0$ if $d_I>d_i$). But by the well known property of the Vandermonde matrix this system has the unique solution $f_{d_I,0;r}'=\ldots =f_{d_I,k-1;r}'=0$, a contradiction. So, 
$$
H=[\sum_{0<j\leq N}g_{j;r}B_{j}]D^{r}.
$$
Assume $g_{j_0;r}$ is the first non-zero coefficient. Then again by lemma \ref{L:1} $B_{j_0}H= g_{j_0;r}B_{j_0}D^{r}\neq 0$, a contradiction. 
\end{proof}

\begin{Def}	
\label{D:Sdeg_A}
Let $H$ be a HCP. We define 
$$Sdeg_A(H)=\max \{d_i|\quad 0\leq i<k \} \quad \mbox{or $-\infty$, if all $f_{l,i;r}=0$ } 
$$
and 
$$Sdeg_B(H)=\max\{j|\quad g_{j;r}\neq0\} \quad \mbox{or $-\infty$, if all $g_{j;r}=0$}
$$

We define a  {\it homogeneous canonical polynomial combination} (in short HCPC) as a finite sum of HCP (of different orders).
We extend the functions $Sdeg_A$, $Sdeg_B$ in an obvious way to all HCPCs. 

We'll say that a HCPC $H$ {\it doesn't contain $A_i$} if $f_{l,i;r}=0$ for all $i>0$ and all $r$. We'll say that a HCPC $H$ {\it doesn't contain $B_j$} if $Sdeg_B(H)=-\infty$.
\end{Def}

\begin{ex}
\label{Ex:HCP-dif-op}
Suppose $H$ is a HCP and $H\in D_1\hat{\otimes}_K\tilde{K}$ is a {\it differential operator}. Then it is easy to see that it can be uniquely written in the form \eqref{E:HCP}, which does not contain $A_i$ and $B_j$. 
\end{ex}

In practice it is more convenient to work with another form of HCPs, which we have already used in the proof of lemma \ref{L:correct_HPC}:  

\begin{Def}
\label{D:G-form}
	Suppose $H$ is a HCP. Then  $H$ can be (uniquely) written in another form: 
	$$
	H=(\sum_{0\leq i<k}\sum_{0\leq l\leq d_i} f'_{l,i;r}\Gamma_lA_i+\sum_{0<j\leq N}g_{j;r}B_{j})D^{r}
	$$
	which we'll call {\it the G-form of $H$}.
\end{Def}

It's easy to see that two forms of $H$ are one to one correspondence to each other, and therefore the G-form is also uniquely defined. Besides, the definitions of $Sdeg_A$, $Sdeg_B$ don't depend on the form, i.e. $Sdeg_A(H)$ is again the maximal $d_i$. 

The following lemma is the first obvious property of HCPCs.

\begin{lemma}
\label{L:obvious}
	Suppose $H$ and $M$ are two HCPCs, $k_1,k_2\in \tilde{K}$ are two arbitrary constant. Then $T=k_1H+k_2M$ is also a HCPC, with 
	$$
	Sdeg_A(T)\leq \max\{Sdeg_A(H), Sdeg_A(M) \}
	$$$$
	Sdeg_B(T)\leq \max\{Sdeg_B(H), Sdeg_B(M) \}
	$$
\end{lemma}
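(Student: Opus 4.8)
The statement to prove is Lemma \ref{L:obvious}: a linear combination of HCPCs is again a HCPC, with the stated inequalities on $Sdeg_A$ and $Sdeg_B$.

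The plan is to reduce immediately to the case of HCPs of a single fixed order, since a HCPC is by definition a finite sum of HCPs of different orders, and scaling and adding commutes with grouping by homogeneous order. So first I would write $H = \sum_r H_r$ and $M = \sum_r M_r$ where $H_r, M_r$ are HCPs of order $r$ (allowing zero summands), and observe $k_1 H + k_2 M = \sum_r (k_1 H_r + k_2 M_r)$; it then suffices to show each $k_1 H_r + k_2 M_r$ is a HCP of order $r$ (or zero) with $Sdeg_A$ and $Sdeg_B$ bounded by the max of the corresponding quantities for $H_r$ and $M_r$. This is where the bulk of the argument lives.

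For a single order $r$, I would use the G-form of Definition \ref{D:G-form}: write $H_r = (\sum_{0\le i<k}\sum_{0\le l\le d_i^H} f'_{l,i;r}\Gamma_l A_i + \sum_{0<j\le N_H} g_{j;r}^H B_j)D^r$ and similarly for $M_r$ with data $d_i^M$, $N_M$. Then $k_1 H_r + k_2 M_r$ is, termwise, again of this shape: the $\Gamma_l A_i$ coefficients add as $k_1 f'_{l,i;r} + k_2 f'^M_{l,i;r}$ (set missing coefficients to zero by padding $d_i$ up to $\max\{d_i^H, d_i^M\}$), and similarly the $B_j$ coefficients add, with $N := \max\{N_H, N_M\}$. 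The positivity-type condition (item 3 in Definition \ref{D:HCP}, $g_{j;r}=0$ for $j\le -r$ when $r<0$) is preserved because it holds for both summands and a zero stays zero under linear combination. Hence $k_1 H_r + k_2 M_r$ is written in a valid G-form, so by the equivalence of the two forms it is a HCP (or, if all resulting coefficients vanish, it is $0$, which we may treat as the HCP with $Sdeg_A = Sdeg_B = -\infty$). For the degree bounds: every nonzero $\Gamma_l A_i$-coefficient of the sum has $l \le \max\{d_i^H, d_i^M\} \le \max\{Sdeg_A(H_r), Sdeg_A(M_r)\}$, giving $Sdeg_A(k_1 H_r + k_2 M_r) \le \max\{Sdeg_A(H_r), Sdeg_A(M_r)\}$; likewise every nonzero $B_j$-coefficient has $j \le \max\{N_H, N_M\}$ but more precisely $j \le \max\{Sdeg_B(H_r), Sdeg_B(M_r)\}$ since a $B_j$ with $j$ strictly above both $Sdeg_B$ values has coefficient $0+0=0$. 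Finally, reassembling over all $r$ and using that $Sdeg_A, Sdeg_B$ of a HCPC are the maxima over the orders $r$, I get the claimed inequalities for $H, M$ themselves.

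I do not anticipate a genuine obstacle here — this is a bookkeeping lemma. The only mild subtlety is the handling of cancellation: after taking the linear combination some coefficients may vanish, so one must phrase the bounds as $\le$ (not $=$) and be careful that the $-\infty$ convention for the zero HCP is consistent, which it is. A secondary point worth a sentence is to justify that padding the $x$-degrees $d_i$ with zero coefficients does not change which element of $\hat{D}_1^{sym}\hat{\otimes}_K\tilde{K}$ is represented; this is immediate from the uniqueness in Lemma \ref{L:correct_HPC}. Everything else is the linearity of the G-form representation.
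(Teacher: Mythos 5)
Your proof is correct: the paper states this lemma without proof (it is declared an obvious property of HCPCs), and your argument just supplies the routine bookkeeping — reduction to a fixed homogeneous order, termwise linearity of the G-form with padding justified by the uniqueness of the representation, and the $\le$/$-\infty$ conventions handling possible cancellation. Nothing differs in substance from what the paper intends.
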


Before presenting next result, we need a lemma from standard ODE book:  

\begin{lemma}{(\cite[Ch.2, item 10 Th.8]{Pont})}
	\label{L:Port}
	Let $F$ be a field of characteristic 0. Suppose $L,f\in F[t]$ are non-zero polynomials with $\deg f=r$, $\lambda\in F$, and $q$ is the multiplicity of $t-\lambda$ in the polynomial $L(t)$ (if $L(\lambda )\neq 0$, then $q=0$). Then the ODE
	$$
	L(\partial )z=\sum_{i}a_{i}z^{(i)}=f(t)e^{\lambda t}
	$$
	has a solution in the form of 
	$$
	z_0=t^q g(t)e^{\lambda t},
	$$
	where $g(t)$ is  a polynomial of  degree $r$ (the same as $f$).
\end{lemma}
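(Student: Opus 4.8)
The final statement to prove is Lemma \ref{L:Port}, a classical result from the theory of linear ordinary differential equations with constant coefficients.

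\textbf{Plan of proof.} The plan is to reduce to the well-known method of undetermined coefficients for finding a particular solution of $L(\partial)z = f(t)e^{\lambda t}$. First I would perform the standard substitution $z = w\, e^{\lambda t}$, which converts the equation into $L(\partial + \lambda)w = f(t)$, using the shift identity $\partial^{j}(we^{\lambda t}) = e^{\lambda t}(\partial + \lambda)^{j}w$ (valid over any field of characteristic zero, so in particular over $\tilde{K}$). Writing $M(t) := L(t + \lambda)$, the multiplicity of the root $t-\lambda$ in $L$ equals the multiplicity of $t$ in $M$, i.e. $M(t) = t^{q}\tilde{M}(t)$ with $\tilde{M}(0)\neq 0$. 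So the problem becomes: show $\tilde{M}(\partial)(\partial^{q} w) = f(t)$ has a polynomial solution $w$ with $w$ of the form $t^{q}g(t)$, $\deg g = r$. Equivalently, setting $u := \partial^{q} w$, one needs $\tilde{M}(\partial) u = f$ to have a polynomial solution $u$ of degree exactly $r$; then $w := \int^{q} u$ (ordinary antiderivative, taken $q$ times with zero constants) is $t^{q}g(t)$ with $\deg g = r$ since integration raises polynomial degree by one and multiplies by a factor involving $t^{q}$.

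\textbf{Key step.} The heart of the matter is that the operator $\tilde{M}(\partial)$ acts invertibly on the finite-dimensional space $\tilde{K}[t]_{\le r}$ of polynomials of degree $\le r$. Indeed, since $\tilde{M}(0)\neq 0$, write $\tilde{M}(\partial) = \tilde{M}(0)\cdot\mathrm{Id} + N$ where $N = \tilde{M}(\partial) - \tilde{M}(0)$ is a $\tilde{K}$-linear combination of $\partial, \partial^{2}, \ldots$, hence strictly lowers degree and is therefore nilpotent on $\tilde{K}[t]_{\le r}$. Thus $\tilde{M}(\partial)$ is invertible on $\tilde{K}[t]_{\le r}$ (its inverse is $\tilde{M}(0)^{-1}\sum_{i\ge 0}(-\tilde{M}(0)^{-1}N)^{i}$, a finite sum on this space), and the unique solution $u := \tilde{M}(\partial)^{-1}f$ lies in $\tilde{K}[t]_{\le r}$. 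Moreover its degree is exactly $r$: the top-degree term of $\tilde{M}(\partial)u$ is $\tilde{M}(0)$ times the top-degree term of $u$ (all other summands of $N$ lower the degree), so matching with $f$ of degree $r$ forces $\deg u = r$ with leading coefficient $\tilde{K}$-proportional to that of $f$.

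\textbf{Conclusion.} Having obtained $u\in\tilde{K}[t]$ with $\deg u = r$ solving $\tilde{M}(\partial)u = f$, set $w := J^{q}u$ where $J$ denotes the ordinary integration operator $J(t^{m}) = t^{m+1}/(m+1)$; then $\partial^{q}w = u$, so $\tilde{M}(\partial)\partial^{q}w = f$, i.e. $M(\partial)w = f$, and unwinding the substitution gives $z_{0} = w\,e^{\lambda t}$ with $L(\partial)z_{0} = f(t)e^{\lambda t}$. Since $w = J^{q}u$ and $u$ is a polynomial of degree $r$ with nonzero constant term (or in any case a polynomial of exact degree $r$), repeated integration yields $w = t^{q}g(t)$ where $g$ is a polynomial of degree $r$, as claimed. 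I do not expect a genuine obstacle here: this is entirely classical linear algebra over a characteristic-zero field, and the only mild care needed is tracking degrees through the two substitutions and observing that the multiplicity $q$ is preserved under $t\mapsto t+\lambda$. The one point worth stating carefully is the nilpotency of the degree-lowering part $N$, which is what makes the formal inverse a genuine (finite) operator on $\tilde{K}[t]_{\le r}$.
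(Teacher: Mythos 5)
Your proof is correct, and it is genuinely more than what the paper itself supplies: the paper does not prove Lemma \ref{L:Port} at all, but cites Pontryagin's book (where it is proved over $\dc$) and adds only the remark that the claim amounts to solvability of the linear system on the coefficients of $g(t)$ obtained by substituting $z_0$ into the ODE, which is asserted to be independent of the ground field. Your route --- the exponential shift $z=we^{\lambda t}$ turning $L(\partial)$ into $M(\partial)=L(\partial+\lambda)=\partial^q\tilde M(\partial)$ with $\tilde M(0)\neq 0$, the observation that $\tilde M(\partial)=\tilde M(0)\cdot\mathrm{Id}+N$ with $N$ degree-lowering and hence nilpotent on $\tilde K[t]_{\le r}$, so that $\tilde M(\partial)u=f$ has a (unique) polynomial solution of exact degree $r$, followed by $q$-fold antidifferentiation with zero constants to produce $w=t^qg(t)$, $\deg g=r$ --- is the standard textbook argument, but written out it works verbatim over any field of characteristic zero (the only divisions are by $\tilde M(0)$ and by the integers $m+1,\dots,m+q$). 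What your approach buys is a self-contained justification that sidesteps the paper's somewhat terse field-transfer remark, and in addition it yields uniqueness of the solution within $t^q\tilde K[t]_{\le r}$ and the explicit relation between the leading coefficients of $g$ and $f$; what the citation-plus-remark buys is merely brevity. The only stylistic caution is that the phrase in your plan that integration ``multiplies by a factor involving $t^q$'' is loose, but your conclusion makes it precise via $J^q(t^m)=\tfrac{m!}{(m+q)!}t^{m+q}$, so no gap remains.
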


\begin{rem} Although this lemma was formulated and proved for the case $F=\dc$ in the book, its proof is valid in the case of arbitrary field of characteristic zero $F$ too. For, the claim of lemma is equivalent to the claim that the linear system on coefficients of the polynomial $g(t)$, obtained after substituting $z_0$ into the ODE, has a solution. Since it is linear, its solvability  does not depend on the ground field.
\end{rem}

Now we are ready to prove the following claim. 

\begin{lemma}
	\label{L:ODE sol}
	Suppose $M$ is a HCPC. Suppose $H\in \hat{D}_1^{sym}\hat{\otimes}_K\tilde{K}$ is a HCPC  satisfying the condition
	$$
	[\partial^k,H]=M
	$$
	where $k$ is the original $k$ we have fixed. Then we have 
	\begin{enumerate}
	    \item If $Sdeg_A(M)\neq -\infty$ then $Sdeg_A(H)=Sdeg_A(M)+1$ and $Sdeg_B(H)=Sdeg_B(M)$.
	    \item If $Sdeg_A(M)= -\infty$ then  $Sdeg_A(H)$ is either $0$ or $-\infty$ and $Sdeg_B(H)=Sdeg_B(M)$.
	\end{enumerate}
\end{lemma}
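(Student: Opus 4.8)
The plan is to work with the G-form of the HCPCs throughout and to reduce everything to the commutation identities in Lemma \ref{L:1}. Write $H = \sum_r H_r$ as a sum of HCPs of distinct orders $r$, and similarly $M = \sum_r M_r$. Since $[\partial^k, \cdot]$ shifts orders by $k$ (indeed $[\partial^k, H_r]$ is homogeneous of order $r+k$ when nonzero, by the properties of $\Ord$ in Remark \ref{R:ord_properties}), the equation $[\partial^k, H] = M$ decomposes into independent homogeneous equations $[\partial^k, H_r] = M_{r+k}$. So it suffices to treat a single HCP $H$ of order $r$ with $[\partial^k, H] = M$, $M$ a HCP of order $r+k$, and to track how $Sdeg_A$ and $Sdeg_B$ behave.

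First I would compute $[\partial^k, H]$ explicitly in G-form. Writing $H = \left(\sum_{i}\sum_{l} f'_{l,i}\Gamma_l A_i + \sum_j g_j B_j\right)D^r$, I use the commutation relations: $\partial^k A_i = \xi^{ki}A_i \partial^k = A_i\partial^k$ (since $\xi^k = 1$), so the $A_i$ commute with $\partial^k$; the action on $\Gamma_l$ is governed by item 7 of Lemma \ref{L:1}, $\partial^k \Gamma_l = \sum_{m\le l}\binom{l}{m}k^{l-m}\Gamma_m \partial^k$; and the action on $B_j$ by item 9. The key algebraic input is the $B_j$-expansion of $\int^k \partial^k = 1 - \sum_{s=1}^k B_s$ (item 3) together with $\Gamma_l B_j = (j-1)^l B_j$ (item 8) and $A_i B_j = \xi^{i(j-1)}B_j$ (item 1). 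Carrying out the commutator, the terms involving $\Gamma_l A_i D^r$ with top $x$-degree $d_I := Sdeg_A(H)$ produce a contribution whose leading part has $x$-degree exactly $d_I - 1$: the binomial/derivative action $\partial^k \Gamma_{d_I} = \binom{d_I}{d_I-1}k\,\Gamma_{d_I-1}\partial^k + \cdots$ kills one power, and the crucial point is that the coefficient of $\Gamma_{d_I-1}A_i$ in $M$ is $d_I\,k\,f'_{d_I,i}$ up to lower-degree corrections, hence not all of these can vanish (using Lemma \ref{L:correct_HPC} or the Vandermonde argument from its proof). This gives $Sdeg_A(M) = Sdeg_A(H) - 1$, equivalently $Sdeg_A(H) = Sdeg_A(M)+1$, proving the $Sdeg_A$ half of case 1. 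For case 2, if $M$ contains no $\Gamma_l A_i$ terms at all, then running the same computation backwards forces the top-degree coefficients of $H$ to satisfy the Vandermonde system, so either $d_I = 0$ (the $\Gamma_0 A_i$ terms, which commute with $\partial^k$ and hence never appear in $M$, survive) or $H$ has no $\Gamma_l A_i$ part at all, i.e. $Sdeg_A(H) \in \{0, -\infty\}$.

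For the $Sdeg_B$ statement I would argue as follows. In the G-form, the $B_j$ part of $[\partial^k, H]$ receives contributions from two sources: from $[\partial^k, B_j D^r]$ directly (via $D^u B_j = B_{j-u}D^u$ of item 9, combined with $\int^k\partial^k = 1-\sum B_s$ when $r<0$), and from the cross-terms where $\partial^k$ acting past $\int^{-r}$ produces $B$-terms. A careful bookkeeping shows that the $B$-degree is preserved: the highest $B_j$ appearing in $M$ is the highest $B_j$ appearing in $H$, because the operations involved ($D^u B_j = B_{j-u}D^u$, multiplication by $\Gamma_l$ or $A_i$ which are $B$-degree-preserving by items 1 and 8) never raise the $B$-index, and the top one cannot cancel since $B_{j_0}$ is idempotent-like ($B_iB_j = \delta_i^j B_j$) so isolating the top term via left multiplication by $B_{j_0}$ detects it. This yields $Sdeg_B(H) = Sdeg_B(M)$ in both cases.

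The main obstacle I expect is the precise bookkeeping of the $B_j$-terms generated by the interaction of $\partial^k$ with the $\int^{-r}$ factor (when $r < 0$), and making sure no unexpected cancellation occurs between the $B$-terms coming from $[\partial^k, f'_{l,i}\Gamma_l A_i D^r]$ and those from $[\partial^k, g_j B_j D^r]$ — in particular one must check that the top $B$-index cannot be simultaneously produced and annihilated. The clean way to handle this is to first reduce to $r = 0$ by the remark that $[\partial^k, H]D^{-r}$ and $[\partial^k, HD^{-r}]$ differ by a manifestly lower-complexity correction (using $D^rD^{-r} = 1 - \sum_{s=1}^{-r}B_s$ and the fact, established in the proof of Lemma \ref{L:correct_HPC}, that multiplying a HCP by $B_i$ yields a linear combination of $B_j$'s only), so that the $\Gamma_l A_i$ data is unaffected and the $B_j$ data only gets a controlled, non-degree-raising modification; then the $r=0$ computation is the transparent one sketched above.
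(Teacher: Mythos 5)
Your proposal splits into two halves that fare differently. The $Sdeg_A$ half is essentially correct, and it is a genuinely different route from the paper's: you compute $[\partial^k,H]$ directly in G-form, using $\partial^k\Gamma_l=\sum_{m\le l}\binom{l}{m}k^{l-m}\Gamma_m\partial^k$ and $\partial^kA_i=A_i\partial^k$, observe that the $B_j$-part of $H$ contributes only $B$-terms to the commutator, and read off that the coefficient of $\Gamma_{d_I-1}A_i$ in $M$ is $d_I\,k\,f'_{d_I,i}$ with $d_I=Sdeg_A(H_r)$; uniqueness of the G-form (Lemma \ref{L:correct_HPC}) then gives $Sdeg_A(M)=Sdeg_A(H)-1$ when $Sdeg_A(H)\ge 1$ and $Sdeg_A(M)=-\infty$ when $Sdeg_A(H)\in\{0,-\infty\}$, which is exactly cases 1 and 2. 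The paper instead passes to the commutative ring $\tilde{K}[[x]]((\tilde{\partial}))$, treats $[\partial^k,H]=M$ as an ODE with characteristic polynomial $(t-\tilde{\partial})^k-\tilde{\partial}^k$, and reads both degrees off the quasi-polynomial special solutions supplied by Lemma \ref{L:Port}; your direct computation is arguably more transparent for the $Sdeg_A$ statement.

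The $Sdeg_B$ half, however, has a genuine gap, and it sits exactly where you flagged "the main obstacle". Your key assertion -- that the operations involved never raise the $B$-index, so the highest $B_j$ in $M$ equals the highest $B_j$ in $H$ -- is false. When $\Ord(H)=r<0$, the commutator of $\partial^k$ with the $\Gamma_lA_iD^{r}$ part of $H$ produces, via $\int^{-r}\partial^{k}=(1-\sum_{s=1}^{-r}B_s)\partial^{k+r}$ together with $\Gamma_lA_iB_s=(s-1)^l\xi^{i(s-1)}B_s$, terms $B_sD^{k+r}$ with $1\le s\le -r$ that need not be matched by any $B$-term of $H$. Concretely, for $k=2$ one has $[\partial^2,A_1\int]=A_1(\partial^2\int-\int\partial^2)=A_1B_1\partial=B_1\partial$, so $H=A_1\int$ is a HCPC with $Sdeg_B(H)=-\infty$ while $M=B_1\partial$ has $Sdeg_B(M)=1$. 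What your bookkeeping does give cleanly is one inequality: if $Sdeg_B(H)=j_0\neq-\infty$, the $B_{j_0}$-coefficient of $M$ is $-g_{j_0;r}\neq 0$ (all other sources produce strictly smaller indices, namely $j-k$, $j<j_0$, or the cross-terms of index at most $-r<j_0$, and $B_{j_0}D^{r+k}\neq 0$ since $j_0>-r>-(r+k)$), hence $Sdeg_B(M)\ge Sdeg_B(H)$ -- which is in fact the direction used later in Proposition \ref{P:Si}. The reverse inequality cannot be obtained by a no-cancellation argument on the commutator alone: in the paper it comes from the explicit shape of the special solution $H_0=h_j(x)\tilde{\delta}\,\tilde{\partial}^{\Ord(M)-k}$ with $\deg h_j=j-1$, an input your computation does not reproduce, and the example above shows the asserted equality is genuinely delicate in the regime where $B_jD^{\Ord(M)-k}$ degenerates, so "careful bookkeeping" along your lines cannot close this step without additional hypotheses or a different mechanism.
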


\begin{proof} 
	1.  Consider the HCP decomposition of $M$: $M=\sum_{i=l_1}^{l_2}M_i$, where $\Ord(M_i)=i$, and the HCP decomposition of $H$. One can easily see that
	$$
	[\partial^k,H_{i-k}]=M_i
	$$
	holds for any $i$. Since $Sdeg_A,Sdeg_B$ of $H,M$ are defined as the maximum values of $Sdeg_A,Sdeg_B$ on $H_i,M_j$,  we can assume that $H,M$ are HCPs. 
	
	2. We use the same idea as in the proof of proposition \ref{T:A.Z 7.1}. Considering $\partial$ as a constant the original equation becomes an ODE in the ring $\tilde{R}$: 
		\begin{equation}
		H^{(k)}+\dots+k H'\tilde{\partial}^{k-1}=M.
		\end{equation}	
        The eigen-polynomial of this ODE is $L(t):=(t+\tilde{\partial})^k-\tilde{\partial}^k$. Any solution $H$ of this equation looks like $H=H_0+\sum_{i=0}^{k-1}c_i\tilde{A}_{k,i}$, where $c_i\in \tilde{K}((\tilde{\partial}))$ and $H_0$ is a special solution. Since $M$ is a HCP, it can be represented as a linear combination of quasi-polynomials  $f_i(x)\tilde{A}_i\tilde{\partial}^r$ and $\tilde{B}_j\tilde{\partial}^r$, where $f_i(x)=\sum_l f_{l,i}x^l\tilde{\partial}^l$ (see the definition of HCP) in the ring $\tilde{R}$, which are homogeneous with respect to the order $\Ord$. Therefore, there is a partial solution equal to a sum of partial solutions of similar equations with monomial right hand side $M$.   
       
       3. We have two possible cases of such monomials:
        \begin{enumerate}
        	\item $M=f_i(x)\tilde{A}_i{ \tilde{\partial}^{\Ord(M)}}$. Notice that $\tilde{A}_i=e^{(\xi^i-1)x\tilde{\partial}}$ and $(\xi^i-1)\tilde{\partial}$ is the root of $L(t)$ with multiplicity 1. Then according to  Lemma \ref{L:Port}, such ODE has a special solution $H_0=xg_i(x)\tilde{A}_i{ \tilde{\partial}^{\Ord(M)}}$ (with $deg_x(f_i)=deg_x(g_i)$). 
 
 Now note that, since $M$ is homogeneous, a special solution can be chosen to be also homogeneous (just take the homogeneous component of any special solution), because the left hand side of our ODE is homogeneous for any homogeneous $H$. Then such a solution will be also an HCP and  
        	$$
        	Sdeg_A(H_0)=Sdeg_A(M)+1, \quad Sdeg_B(H_0)=Sdeg_B(M)=-\infty .
        	$$
        	\item $M=b_j\tilde{B}_j{ \tilde{\partial}^{\Ord(M)}}$, where $b_j\in \tilde{K}$. Notice that $\tilde{B}_j=cx^{j-1}\tilde{\partial}^{j-1}\tilde{\delta}= cx^{j-1}\tilde{\partial}^{j-1}e^{-x\tilde{\partial}}$, and $-\tilde{\partial}$ is not a root of $L(t)$. Then according to Lemma \ref{L:Port}, we know the special solution of this ODE will be in the form 
        	$$
        	H_0=h_j(x)\tilde{\delta}{\tilde{\partial}^{\Ord(M)}}
        	$$
        	with $deg_x(h_j)=j-1=deg_x(B_j)$. Again a special solution can be chosen to be homogeneous, i.e. it is a HCP. Thus we can write this $H_0$ in the form 
        	$$
        	H_0=\sum_{l=1}^{j}h_{l-1,j}\tilde{B}_l\tilde{\partial}^{\Ord (M)-k}
        	$$
        	with $h_{l-1,j}\in \tilde{K}$, $h_{j-1,j}\neq 0$. 
        	Hence $Sdeg_A(H_0)=-\infty$, $Sdeg_B(H_0)=Sdeg_B(M)$. 
       \end{enumerate}
        
        4. Now we have: $H=H_0+\sum_{i=0}^{k-1}c_i\tilde{A}_{k,i}$, where $c_i\in \tilde{K}((\tilde{\partial}))$ and $H_0$ is a HCP. Moreover, if  $Sdeg_A(M)\neq -\infty$, then $Sdeg_A(H_0)=Sdeg_A(M)+1$ and $Sdeg_B(H_0)=Sdeg_B(M)$. And if $Sdeg_A(M)= -\infty$, then $Sdeg_A(H_0)= -\infty$  and $Sdeg_B(H_0)=Sdeg_B(M)$. Since our original solution $H\in \hat{D}_1^{sym}\hat{\otimes}_K\tilde{K}$, we have $\Ord (H)<\infty$ and therefore $\Ord (c_i)< \infty$ for all $i$, i.e. $c_i\in \tilde{K} [\tilde{\partial}^{-1}, \tilde{\partial}]$ is a monomial since $H,H_0$ are a HCPs.  
        
Writing this solution in a canonical form and replacing $\tilde{\partial}$ by $D$, we get, as in the proof of propostion \ref{T:A.Z 7.1}, the original solution $H$. Since $Sdeg_A(\sum_{i=0}^{k-1}c_i{A}_{k,i})=0$, and in view of lemma \ref{L:correct_HPC}, we get our assertion. Thus we complete the proof.
	
\end{proof}

\begin{rem}
\label{R:ODE sol}
The condition about existence of $H$ from lemma is essential: for example $[\partial^4,H]=B_{10}\partial^4$ doesn't have any HCPC solution. But we will see if $M$ doesn't contain $B_j$ such $H$ must exist, see lemma \ref{L:ODE sol 2}.
\end{rem}

The following lemma describes basic properties of functions $Sdeg_A$ and $Sdeg_B$. Besides, it contains also useful formulae for monomial multiplication. 

\begin{lemma}
	\label{L:HCPC}
	Suppose $H,M$ are two  HCPCs. Then $T:=HM$ is also a HCPC, what's more we have 
	\begin{enumerate}
		\item $Sdeg_A(T)\leq Sdeg_A(H)+Sdeg_A(M)$ (here we assume $-\infty +n=-\infty$). 
		\item 
		if $H,M$ are two  HCPs then
		\begin{enumerate}
			\item If $\Ord(H)\geq 0$, then $Sdeg_B(T)\leq \max \{Sdeg_B(H),Sdeg_B(M)\}$
			\item If $\Ord(H)< 0$, then $Sdeg_B(T)\leq \max \{Sdeg_B(H),Sdeg_B(M)-\Ord(H),-\Ord(H)\}$
		\end{enumerate}
	\end{enumerate}
\end{lemma}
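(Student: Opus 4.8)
The plan is to reduce the statement to the analogous product formulae for single HCP monomials and then sum. Since both $H$ and $M$ are finite sums of HCPs, and by Lemma \ref{L:obvious} the functions $Sdeg_A$, $Sdeg_B$ behave subadditively under $\tilde{K}$-linear combinations, it suffices to prove the assertion when $H$ and $M$ are each a single HCP, written in G-form as in Definition \ref{D:G-form}. Writing $H=(\sum_{i}\sum_{l} f'_{l,i;r}\Gamma_l A_i+\sum_j g_{j;r}B_j)D^{r}$ and $M=(\sum_{i'}\sum_{l'} f''_{l',i';s}\Gamma_{l'} A_{i'}+\sum_{j'} h_{j';s}B_{j'})D^{s}$, I would expand the product $HM$ into a sum over the four types of cross-terms: $\Gamma A\cdot D^r\cdot \Gamma A\cdot D^s$, $\Gamma A\cdot D^r\cdot B\cdot D^s$, $B\cdot D^r\cdot \Gamma A\cdot D^s$, and $B\cdot D^r\cdot B\cdot D^s$, and analyze each using the commutation relations of Lemma \ref{L:1}.

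First I would push $D^r$ past the second bracket. By Lemma \ref{L:1}(7), $D^r\Gamma_{l'}=\sum_{t\le l'}\binom{l'}{t}r^{l'-t}\Gamma_t D^r$, so $D^r$ commutes with $\Gamma$'s at the cost of lowering the $\Gamma$-degree — this does not increase $Sdeg_A$. By Lemma \ref{L:1}(6) $A_i$ commutes with $\Gamma_t$, and $A_i A_{i'}=A_{i+i'}$ by Proposition \ref{T:A.Z 7.1}(1). Hence each $\Gamma A\cdot\Gamma A$ cross-term contributes a HCP with $\Gamma$-degree at most $l+l'\le Sdeg_A(H)+Sdeg_A(M)$; this gives part (1) for those terms, while the $B$-part of such a term is empty (so contributes $-\infty$ to $Sdeg_B$, consistent with both branches of (2)). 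For the terms involving a $B_{j'}$ on the right: $D^r B_{j'}=B_{j'-r}D^r$ by Lemma \ref{L:1}(9), and $\Gamma_l A_i B_{j'-r}=(j'-r-1)^l\xi^{i(j'-r-1)}B_{j'-r}$ by Lemma \ref{L:1}(1),(8), so these terms are pure $B$-terms: their $Sdeg_A$ is $-\infty$ and their $Sdeg_B$ is $j'-r$ (when $j'-r>0$; otherwise the term vanishes, again by Lemma \ref{L:1}(9)). If $\Ord(H)=r\ge 0$ then $j'-r\le j'\le Sdeg_B(M)$; if $r<0$ then $j'-r\le Sdeg_B(M)-r=Sdeg_B(M)-\Ord(H)$. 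For the $B_j\cdot(\text{anything})$ terms on the left: $B_j D^r$ — if $r\ge 0$, $B_j\partial^r=B_{j-r}\partial^r$ stays a $B$-term of index $\le j\le Sdeg_B(H)$; if $r<0$, $B_j\int^{-r}=B_{j-r}\int^{-r}$ has index $j-r\le Sdeg_B(H)-r=Sdeg_B(H)-\Ord(H)$, but one must also account for the case where the factor to the right of $B_j D^r$ is itself a $B$ or a $\Gamma A D^s$; multiplying $B_m$ by a HCP on the right again yields $B$-terms (by Lemma \ref{L:1}(1),(5),(8),(9)), with index bounded by combining the above. One also needs the crude bound that when $r<0$ a bare $D^r=\int^{-r}$ applied to a $\Gamma A$-term can create, via Lemma \ref{L:1}(3) or direct computation, terms $B_1,\dots,B_{-r}$, accounting for the $-\Ord(H)$ term in branch (2b).

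The one genuinely routine but delicate point is bookkeeping the $B$-degree in case 2(b): when $\Ord(H)<0$, the operator $\int^{-r}$ sitting between the two brackets both shifts $B$-indices upward by $-r$ and — through relations like $\int^m\partial^m=1-\sum_{k=1}^m B_k$ (Lemma \ref{L:1}(3)) — can manufacture new $B_j$ with $1\le j\le -r$ out of $\Gamma A$-monomials. This is exactly why the bound must include all three of $Sdeg_B(H)$, $Sdeg_B(M)-\Ord(H)$, and $-\Ord(H)$. I expect this case to be the main obstacle: it requires checking that no product of the listed generators can produce a $B_j$ with $j$ exceeding this maximum, which amounts to a careful but elementary case check against Lemma \ref{L:1}. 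Part (1) and case 2(a) are straightforward by contrast, since when $\Ord(H)\ge 0$ the middle $D^r=\partial^r$ only lowers or preserves both $\Gamma$-degrees and $B$-indices. Finally, I would note that closure of the class of HCPCs under products follows along the way, since every cross-term produced is visibly of the HCP shape \eqref{E:HCP} after collecting via Lemma \ref{L:1}.
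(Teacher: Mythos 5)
Your overall strategy is the same as the paper's: reduce to single monomials written in G-form, work out the four cross-term types with the relations of Lemma \ref{L:1}, and read off the bounds (the paper does exactly this, recording the products in formulae \eqref{E:BB}--\eqref{E:AA}). Your treatment of part (1), of the $\Gamma A\cdot B$ cross-terms, and of the $B$-terms manufactured by the collapse $D^rD^s$ when $\Ord(H)<0<s$ (the source of the $-\Ord(H)$ term in 2(b)) is correct; the early remark that the $\Gamma A\cdot\Gamma A$ cross-terms have empty $B$-part is false when $\Ord(H)<0$, but you correct this yourself later, so it is only a presentational inconsistency.

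There is, however, a genuine error in your handling of the cross-terms $g_jB_jD^r\cdot(\cdots)D^s$ when $r=\Ord(H)<0$. The identities you invoke, ``$B_j\partial^r=B_{j-r}\partial^r$'' and ``$B_j\int^{-r}=B_{j-r}\int^{-r}$'', are misreadings of Lemma \ref{L:1}(9), which states $D^uB_j=B_{j-u}D^u$ with $D^u$ on the \emph{left}; as written they are false, and no such move is needed because $B_jD^r$ is already in standard form. The correct computation (the paper's cases 1 and 2, formulae \eqref{E:BB} and \eqref{E:BA}) moves $D^r$ rightward past the factors of $M$, which leaves the left-hand $B_j$ untouched, and then uses $B_jB_k=\delta_j^kB_j$ together with the HCP condition $g_{j;r}=0$ for $j\le -r$ to see that $B_jD^rD^s$ is either $0$ or $B_jD^{r+s}$; hence these terms contribute $B$-index at most $Sdeg_B(H)$ in both branches. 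Your claimed bound $j-r\le Sdeg_B(H)-\Ord(H)$ for these terms is not only unproved but incompatible with the statement you are proving: $Sdeg_B(H)-\Ord(H)$ does not occur in the maximum of 2(b) and can strictly exceed it (take, say, $Sdeg_B(H)=5$, $\Ord(H)=-3$, $Sdeg_B(M)=-\infty$, where 2(b) asserts the bound $5$ while your bookkeeping would only give $8$). So as written your argument establishes a weaker inequality than the lemma claims; the fix is simply to redo this sub-case with the commutation relation applied in the correct direction.
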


\begin{proof}
	First let's prove $T$ is a HCPC. Obviously, it is enough to prove this for the case when $H, M$ are monomials written in the G-form. As a byproduct we'll get convenient multiplication formulae of HCPCs.
	
	Consider the following 4 cases (below we assume in all formulae that $0^0:=1$):
	
		1. $H=b_iB_iD^u, M=c_jB_jD^v$, where $i\ge 1-u$ if $u<0$ and $j\ge 1-v$ if $v<0$, $b_i,c_j\in \tilde{K}$. We have by lemma \ref{L:1}
		\begin{equation}
		\label{E:BB}
		HM=b_iB_iD^uc_jB_jD^v=b_ic_jB_iB_{j-u}D^uD^v= b_ic_j\delta_i^{j-u}B_iD^{u+v} 
		\end{equation}
(here $\delta_i^{j-u}$ is the Kronecker delta), because if $i=j-u$, then $i-1+u+v=j-1+v\ge 0$ and $B_iD^{u+v}\neq 0$.
		
		2. $H=b_iB_iD^u$, where $i\ge 1-u$ if $u<0$, $M=a_{l,m}\Gamma_m A_jD^v$, $b_i, a_{l,m}\in \tilde{K}$. By Lemma \ref{L:1} item 9 we know $B_iD^u=D^uB_{i+u}$, hence $i\ge 1-u$, otherwise $H=0$. Then we have
		\begin{equation}
		\label{E:BA}
		HM=\begin{cases}
			0 & i-1+u+v < 0
		\\a_{l,m}b_i\xi^{j(u+i-1)}(i-1+u)^m B_iD^{u+v} &\text{Otherwise}
		\end{cases}
		\end{equation}
		This is because 
		\begin{multline*}
		B_iD^u\Gamma_mA_jD^v=B_i(x\partial+u)^mD^uA_jD^v=\xi^{uj}B_i(x\partial+u)^mA_jD^uD^v
		\\=\xi^{uj}B_iA_j(x\partial+u)^mD^uD^v=\xi^{(u+i-1)j}B_i(x\partial+u)^mD^uD^v
		=\xi^{(u+i-1)j}(i-1+u)^mB_iD^uD^v
		\end{multline*}
		where the first equality is by Lemma \ref{L:1} item 7; the second equality is by Prop \ref{T:A.Z 7.1}; the third equality is by Lemma \ref{L:1} item 6; the forth equality is by Lemma \ref{L:1} item 1; the fifth equality is by Lemma \ref{L:1} item 8, since $B_i(x\partial)=(i-1)B_i$, so $B_i(x\partial+u)=(u+i-1)B_i$. Notice that when $u<0$, and $v>0$, we have 
		$$
		D^uD^v=(1-\sum_{s=-u-v+1}^{-u}B_s)D^{u+v}
		$$
		And by item 1 we know $B_iB_j=\delta^i_jB_i$, hence we know if $-u-v+1\leq i\leq -u$, then 
		$$
		B_iD^uD^v=(B_i-B_i)D^{u+v}=0
		$$
		But we already assume $i\ge 1-u$, hence $B_iD^uD^v=B_iD^{u+v}$, again by Lemma \ref{L:1} item 9, we know it is 0 when $i-1+u+v<0$.
		
		3. $H=a_{l,m}\Gamma_m A_jD^v, M=b_iB_iD^u$, where $i\ge 1-u$ if $u<0$, $b_i, a_{l,m}\in \tilde{K}$. We have
		\begin{equation}
		\label{E:AB}
		HM= a_{l,m} b_i \xi^{j(i-v-1)}(i-v-1)^m B_{i-v}D^vD^u= \begin{cases}
			0 & i-v< 1
		\\ \lambda B_{i-v}D^{u+v} &\text{Otherwise.}
		\end{cases}
		\end{equation}
where $\lambda =	a_{l,m} b_i \xi^{j(i-v-1)}(i-v-1)^m$. This is because when $i-v< 1$ we have $D^vB_i=B_{i-v}D^v=0$ (c.f. Lemma \ref{L:1} item 9 and by assumptions for $r\leq 0$, $B_r=0$), so that 
$$
\Gamma_m A_jD^vB_iD^u=\Gamma_m A_j(D^vB_i)D^u=0
$$
and when $i-v\ge 1$, we have 
\begin{multline*}
	\Gamma_m A_jD^vB_iD^u=\Gamma_mA_jB_{i-v}D^vD^u=\xi^{j(i-v-1)}\Gamma_mB_{i-v}D^vD^u
	\\=\xi^{j(i-v-1)}(i-v-1)^mB_{i-v}D^vD^u=\xi^{j(i-v-1)}(i-v-1)^mB_{i-v}D^{v+u}
\end{multline*}
The last equality holds obviously when $v\ge 0$ or $u\leq 0$. When $v<0,u>0$, notice that $i>0$, so that $i-v>-v$, hence for any $-u-v+1\leq s\leq -v$, we have $B_{i-v}B_s=\delta_{s}^{i-v}=0$ by Lemma \ref{L:1} item 5. Hence
$$
B_{i-v}D^vD^u=B_{i-v}(1-\sum_{s=-u-v+1}^{-v}B_s)D^{u+v}=B_{i-v}D^{u+v}
$$
		
		4. $H=a_{i,m}\Gamma_mA_iD^u, M=a_{j,n}\Gamma_nA_jD^v$, $a_{i,m}, a_{j,n}\in \tilde{K}$.  We have 
		\begin{multline}
		\label{E:AA}
		HM= a_{i,m} a_{j,n} \xi^{uj} \sum_{l=0}^{n}\binom{n}{l}u^{n-l}\Gamma_{l+m}A_{i+j}D^uD^v
		\\=\begin{cases}
		a_{i,m} a_{j,n} \xi^{uj} \sum_{l=0}^{n}\binom{n}{l}u^{n-l}\Gamma_{l+m}A_{i+j}D^{u+v}+\sum_{s=-u-v+1}^{-u}\lambda_sB_sD^{u+v} &u<0,v>0
		\\ a_{i,m} a_{j,n} \xi^{uj}\sum_{l=0}^{n}\binom{n}{l}u^{n-l}\Gamma_{l+m}A_{i+j}D^{u+v} &\text{Otherwise.}
		\end{cases}
		\end{multline}
		where $\lambda_s=-a_{i,m}a_{j,n}\xi^{u j+(i+j)(s-1)}(u+s-1)^n(s-1)^m$, when $$\max\{1,-u-v+1 \}\leq s\leq -u.$$ 
		This is because
		\begin{multline*}
			\Gamma A_i D^u\Gamma_nA_jD^v=\Gamma_mA_i(x\partial+u)^uD^uA_jD^v=\xi^{uj}A_i\Gamma_m(x\partial+u)^nA_jD^uD^v
			=\xi^{uj}A_{i+j}\Gamma_m(x\partial+u)^nD^uD^v
		\end{multline*}
		and for $-u-v+1\leq s\leq -u$ when $u<0,v>0$, we have 
		\begin{multline*}
		A_{i+j}\Gamma_m(x\partial+u)^nB_s=(s-1+u)^nA_{i+j}\Gamma_mBs
		\\=(s-1)^m(s-1+u)^nA_{i+j}B_s=\xi^{(i+j)(s-1)}(s-1)^m(s-1+u)^nB_s
		\end{multline*}
	
    Hence we know the product is a HCPC. Now we can estimate the $Sdeg$ of $T$:
    
    	1. Consider $Sdeg_A(T)$. Observe that in cases 1-4 our claim is true. So, it is true for a product of any two HCPCs. Note also that in the case 4, for $H,M$ being monomials, we have the equality $Sdeg_A(T)=Sdeg_A(H)+Sdeg_A(M)$, however, in general, if we take the product of sums of monomials, a strict inequality can appear. 
    	 
    	2. Consider $Sdeg_B(T)$. Then explicit formulae of cases 1-4 show our assertion in general case.  
    
\end{proof}

\begin{cor}
\label{C:HCPC(k)}
	For a fixed $k$ all $HCPCs$  form a subring in the ring $\hat{D}_1^{sym}\hat{\otimes}_K\tilde{K}$.
\end{cor}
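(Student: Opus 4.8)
The plan is to deduce Corollary \ref{C:HCPC(k)} directly from the two structural results just established, namely Lemma \ref{L:obvious} and Lemma \ref{L:HCPC}. First I would observe that the set of HCPCs is by definition a $\tilde K$-subspace of $\hat D_1^{sym}\hat\otimes_K\tilde K$ that contains $1$ (taking $H = \Gamma_0 A_0 D^0$, i.e. $r=0$, $d_0 = 0$, $f_{0,0;0}=1$ and all other coefficients zero), so it only remains to check closure under addition and multiplication. Closure under addition (indeed under $\tilde K$-linear combinations) is exactly the content of Lemma \ref{L:obvious}, which moreover records the needed bounds on $Sdeg_A$ and $Sdeg_B$. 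Closure under multiplication is exactly the statement ``$T := HM$ is also a HCPC'' in Lemma \ref{L:HCPC}, whose proof reduced the assertion to the four monomial cases and produced the explicit product formulae \eqref{E:BB}, \eqref{E:BA}, \eqref{E:AB}, \eqref{E:AA}.

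So the body of the proof is short: a HCPC is a finite sum of HCPs of various orders, a HCP written in G-form is a finite $\tilde K$-combination of monomials of the types $B_i D^u$ and $\Gamma_m A_j D^v$, and by distributivity the product of two HCPCs is a finite $\tilde K$-combination of products of such monomials; each such product lies in the span of monomials of the same two types by the case analysis in Lemma \ref{L:HCPC}; and collecting terms of equal order and applying Lemma \ref{L:obvious} shows the result is again a finite sum of HCPs, i.e. a HCPC. Associativity, distributivity, and the ring axioms are all inherited from the ambient ring $\hat D_1^{sym}\hat\otimes_K\tilde K$ (Theorem \ref{T:A.Z 2.1}, item 1, tensored with $\tilde K$), so nothing needs to be re-verified there.

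There is essentially no obstacle here; the one small point worth stating carefully is that the collection ``finite sums of HCPs of different orders'' really is closed under the operations — for addition this is immediate since HCPs of the same order add to an HCP of that order (Lemma \ref{L:obvious}, plus Lemma \ref{L:correct_HPC} for well-definedness of the form), and for multiplication one notes that a product of an order-$u$ monomial and an order-$v$ monomial is homogeneous of order $u+v$ (this is visible in every one of the formulae \eqref{E:BB}--\eqref{E:AA}, where $D^uD^v = D^{u+v}$ up to correction terms $B_s D^{u+v}$ that are themselves order $u+v$), so that after expanding and regrouping by order one obtains finitely many HCPs. Hence the only thing to write is: the HCPCs form a $\tilde K$-subspace containing $1$, closed under multiplication by Lemma \ref{L:HCPC}, and therefore a subring.
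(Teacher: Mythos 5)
Your proposal is correct and follows exactly the route the paper intends: the corollary is stated as an immediate consequence of Lemma \ref{L:obvious} (closure under $\tilde K$-linear combinations) and Lemma \ref{L:HCPC} (closure under products via the monomial formulae \eqref{E:BB}--\eqref{E:AA}), with the remaining ring axioms inherited from the ambient ring. Your extra remarks about $1$ being a HCP and about regrouping products by homogeneous order are harmless elaborations of the same argument.
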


\begin{rem}
Denote by
$$
Hcpc(k):=\{\text{all HCPCs assoicated to }k\}
$$
the subring from corollary. 
It's easy to observe that if $a|b$, $b=ra$ and $\xi,\eta$ are respectively  the $a,b$-primitive roots, then $\eta^r=\xi$, i.e. $A_{b,ir}=A_{a,i}$. This means
$$
Hcpc(a)\subseteq Hcpc(b)
$$
Thus for $p,q$, if we assume $r=gcd(p,q)$, then we have 
$$
Hcpc(p)\bigcap Hcpc(q)\supseteq Hcpc(r).
$$
\end{rem}

\begin{lemma}
	\label{L:ODE sol 2}
	Suppose $M$ is a HCPC which doesn't contain $B_j$ (i.e. $Sdeg_B(M)=-\infty$). Then there exists a HCPC $H\in \hat{D}_1^{sym}\hat{\otimes}_K\tilde{K}$, which satisfies the condition
	$$
	[\partial^k,H]=M. 
	$$
	\end{lemma}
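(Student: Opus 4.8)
The statement to prove is a kind of converse to Lemma \ref{L:ODE sol}: given a HCPC $M$ with $Sdeg_B(M)=-\infty$, I want to produce a HCPC $H$ with $[\partial^k,H]=M$. The natural strategy is to reduce, via the homogeneous decomposition, to the case where $M$ is a \emph{single homogeneous} HCP, and then to the case where $M$ is a single monomial, since $[\partial^k,-]$ preserves homogeneous order and is $\tilde{K}$-linear. For a homogeneous $M$ of order $r$, write it in G-form as a finite sum of monomials $\Gamma_l A_i D^r$ (no $B_j$ terms by hypothesis). The map $[\partial^k,-]$ on the space of HCPs of a fixed order $r+k$ landing in HCPs of order $r$ can be analyzed exactly as in the proof of Lemma \ref{L:ODE sol}: replace $\partial$ by a formal commuting variable $\tilde\partial$, so the equation becomes the ODE $H^{(k)}+\cdots+kH'\tilde\partial^{k-1}=M$ in $\tilde R$, whose eigen-polynomial is $L(t)=(t-\tilde\partial)^k-\tilde\partial^k$.

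**Key steps.** First I would observe that $L(t)$ has simple roots $(\xi^i-1)\tilde\partial$ for $i=0,\ldots,k-1$ (these are distinct because $\xi$ is a primitive $k$-th root of unity; note $i=0$ gives the root $0$). Second, for a monomial $M=a_{l,m}\Gamma_m A_i D^r$, rewrite it (using Lemma \ref{L:1}) as a quasi-polynomial $x^{(\cdot)}\tilde\partial^{(\cdot)}e^{(\xi^i-1)x\tilde\partial}$ times a power of $\tilde\partial$; since $(\xi^i-1)\tilde\partial$ is a simple root of $L$, Lemma \ref{L:Port} gives a special solution $H_0=x\,g(x)e^{(\xi^i-1)x\tilde\partial}$ with $\deg_x g=\deg_x$ of the forcing term, hence $H_0$ is (after taking its homogeneous component and returning $\tilde\partial\mapsto D$) an HCP with $Sdeg_A(H_0)=Sdeg_A(M)+1$ and no $B_j$ terms — this is exactly case 2(a) in the proof of Lemma \ref{L:ODE sol}, but now run in reverse, \emph{constructing} the solution rather than estimating it. Third, summing these monomial solutions over the finitely many monomials of $M$ gives a homogeneous HCP $H_r$ with $[\partial^k,H_r]=M_r$; and since $M$ has only finitely many nonzero homogeneous components (it is a HCPC, hence a finite sum of HCPs), summing the finitely many $H_r$ produces the desired HCPC $H$. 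Fourth, I should check that the resulting $H$ genuinely lies in $\hat D_1^{sym}\hat\otimes_K\tilde K$, i.e. $\Ord(H)<\infty$: this holds because each $H_r$ has order $r+k$ and there are finitely many of them, so $\Ord(H)\le\Ord(M)+k<\infty$.

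**The main obstacle.** The only subtle point, and the reason the hypothesis $Sdeg_B(M)=-\infty$ is essential (cf. Remark \ref{R:ODE sol}), is the root $t=-\tilde\partial$ of the $B_j$-type forcing terms: $B_j$ corresponds to the quasi-polynomial $x^{j-1}\tilde\partial^{j-1}e^{-x\tilde\partial}$, and $-\tilde\partial$ is \emph{not} a root of $L(t)$ — wait, more precisely the issue in the counterexample $[\partial^4,H]=B_{10}\partial^4$ is not solvability of the ODE but that the resulting $H_0=h_j(x)\tilde\delta$ has degree $j-1$ in $x$, which after writing in canonical form and returning to $\hat D_1^{sym}$ gives an operator involving $\int$ to too high a power, violating item 3 of Definition \ref{D:HCP} (the constraint $g_{j;r}=0$ for $j\le -r$). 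Precisely: the special solution for $b_j\tilde B_j\tilde\partial^{s}$ is $H_0=\sum_{l=1}^{j}h_{l-1,j}\tilde B_l\tilde\partial^{s+k}$, and when $s+k<0$ the term with $l>-(s+k)$ is forbidden in a HCP, so no HCP solution exists. Since we have assumed $M$ has no $B_j$ at all, this obstruction never arises: every monomial of $M$ is of the $\Gamma_l A_i$-type, whose special solution $xg(x)A_i D^r$ automatically has the right shape (an $x$-power attached to an $A_i$, extending the range of $d_i$ by one), and the homogeneous components of the general solution $\sum c_i A_{k,i}$ are HCPs of the allowed form by Lemma \ref{T:A.Z L 7.2}. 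Thus the proof is essentially a careful bookkeeping exercise: invoke Lemma \ref{L:Port} root-by-root, sum, and verify HCP-membership — no genuinely new idea beyond what already appears in the proof of Lemma \ref{L:ODE sol}.
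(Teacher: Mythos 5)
Your overall strategy (reduce to homogeneous components, pass to the commutative picture with $\tilde\partial$, and solve the resulting constant-coefficient ODE via Lemma \ref{L:Port}) is genuinely different from the paper's proof, which works directly with the G-form: it first solves for the coefficients $h_{l,i;m-k}$ with $l\ge 1$ through a triangular linear system, obtaining a solution only modulo $B_j$-terms, and then uses the remaining free constants $h_{0,0;m-k},\dots,h_{0,k-1;m-k}$ (i.e.\ a kernel element $\sum_i c_iA_iD^{m-k}$) to kill the $B_j$-discrepancy via a Vandermonde system. Your route could in principle be completed, but as written it has a genuine gap exactly at the point where that second step is needed, namely for homogeneous components of $M$ of order $m<k$ (in particular all components of negative order, which certainly occur for a general $B_j$-free HCPC). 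Two things go wrong there. First, the canonical-form image of a $B_j$-free monomial $\Gamma_lA_i\int^{u}$ is \emph{not} a pure quasi-polynomial with exponent $(\xi^i-1)x\tilde\partial$: the integration operators contribute corrections proportional to $e^{-x\tilde\partial}$ (e.g.\ the image of $A_i\int^u$ is $\bigl(e^{(\xi^i-1)x\tilde\partial}-T(x\tilde\partial)e^{-x\tilde\partial}\bigr)\tilde\partial^{-u}$ with $T$ a truncated exponential), so your monomial-by-monomial bookkeeping does not describe the actual forcing term. Second, and more seriously, the special solution produced by Lemma \ref{L:Port} need not correspond to any element of $\hat D_1^{sym}$: its homogeneous form $\tilde\partial^{\,m-k}\,y\,g(y)e^{(\xi^i-1)y}$ ($y=x\tilde\partial$) has negative powers of $\tilde\partial$ whenever $m<k-1$, and the naive substitution $\tilde\partial^{\,m-k}\mapsto D^{m-k}$ is not the inverse of the image map, so the resulting operator does not solve $[\partial^k,H]=M$ — it leaves a nonzero $B_j$-discrepancy. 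One must add a homogeneous-solution correction $\sum_i c_iA_iD^{m-k}$ with the $c_i$ determined by a Vandermonde system; this is precisely the paper's Step 2 (cases $m\le 0$ and $0<m<k$), and it is precisely where the hypothesis $Sdeg_B(M)=-\infty$ is used. Your assertion that ``this obstruction never arises'' because $M$ has no $B_j$ conflates the solvability of the correction with its absence.

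A concrete instance: take $k=2$, $\xi=-1$, $M=A_1\int^2$ (a $B_j$-free HCP of order $-2$). Your recipe produces, up to normalization, $\tfrac12\Gamma_1A_1\int^4$, but a direct computation with Lemma \ref{L:1} gives
$$
\bigl[\partial^2,\tfrac12\Gamma_1A_1\int^4\bigr]=A_1\int^2+\bigl(B_3-\tfrac32B_4\bigr)\int^2\neq M,
$$
whereas an actual HCP solution is $H=\bigl(\tfrac14-\tfrac54A_1+\tfrac12\Gamma_1A_1\bigr)\int^4$, the constants $\tfrac14,-\tfrac54$ being obtained from the $2\times 2$ Vandermonde system that your argument skips. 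Note also that Lemma \ref{L:ODE sol} \emph{assumes} a solution exists and only estimates its degrees, so appealing to it does not supply the missing existence step: the kernel-correction argument (or the paper's two-step linear algebra) is exactly the new content of the present lemma, contrary to your concluding claim that no new idea is required. (As a minor point, your reading of Remark \ref{R:ODE sol} is also off: the constraint in item 3 of Definition \ref{D:HCP} forbids $B_l$ with $l\le -r$, not $l>-r$.)
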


\begin{proof}
	Take the HCP decomposition of $M$ $M=\sum_{i=l_1}^{l_2}M_i$ , where $\Ord(M_i)=i$. 
	
It's enough to find HCPs $H_i$ such that 
	$$
	[\partial^k,H_i]=M_{i+k}.
	$$
	From now on, we always assume both $H,M$ are HCPs, with $\Ord(H)=m-k$, $\Ord(M)=m$.
	
	Written $H,M$ in G-form, we solve the equation into two steps:
	
		1. Assume 
		$$
		H=(H_{0;m-k}+H_{1;m-k}A_1+\dots+H_{k-1;m-k}A_{k-1})D^{m-k}
		$$
		with 
		$$
		M=(M_{0;m}+M_{1;m}A_1+\dots+M_{k-1;m}A_{k-1})D^m.
		$$
Since $[\partial^k, A_i]=0$, we know by lemma \ref{L:HCPC}  that 
		\begin{equation}
		\label{E:(*)}
		[\partial^k,H_{i;m-k}A_i D^{m-k}]=M_{i;m}A_iD^m \qquad [\mod B_j],
		\end{equation}
	where $[\mod B_j]$ denote denote terms containing $B_j$s. 
		Now assume 
		$$
		H_{i;m-k}=\sum_{l=0}^{t_i}h_{l,i;m-k}\Gamma_l.
		$$
		Since $[\partial^k,h_{0,i;m-k}A_iD^{m-k}]$ contains only terms with $B_j$,  we can ignore terms with $h_{0,i;m-k}$ by looking for a solution. Assume 
		$$
		M_{i;m}=\sum_{l=0}^{t_i-1}m_{l,i;m}\Gamma_l. 
		$$
		Using  Lemma \ref{L:HCPC}, we can directly calculate $[\partial^k,H_{i;m-k}A_i D^{m-k}]$. Then we get the linear system for the unknown coefficients ${h_{1,i;m-k},\dots,h_{t_i,i;m-k}}$:
		\begin{equation}
		\label{E:matrix_equation}
		\begin{pmatrix}
			 k & * & \dots & * \\
			0 & 2 k & \dots & * \\
			\dots & \dots & \dots & \dots \\
			0 & 0 & \dots & t_i k
		\end{pmatrix}\begin{pmatrix}
		h_{1,i;m-k} \\
		h_{2,i;m-k} \\
		\dots \\
		h_{t_i,i;m-k}
		\end{pmatrix}=\begin{pmatrix}
		m_{0,i;m} \\
		m_{1,i;m} \\
		\dots \\
		m_{t_i-1,i;m}
		\end{pmatrix}
		\end{equation}
		Since  this is a lower-triangular matrix, with all diagonal elements non-zero, this equation system always has  a solution. Solving it, we get almost all coefficients in $H$ except for $h_{0,0;m-k},\dots,h_{0,k-1;m-k}$.
		
		2. Suppose $\tilde{H}$ is the result we get in step 1. From the discussion in step 1, we know 
		$$
		[\partial^k,\tilde{H}]=M \quad [\mod B_j]
		$$
		Now assume  $$
		\bar{H}=(h_{0,0;m-k}+\cdots+h_{0,k-1;m-k}A_{k-1})D^{m-k}
		$$
		with $H=\tilde{H}+\bar{H}$. The equation $[\partial^k,H]=M$ becomes
		\begin{equation}
			\label{E:Bj part in ODEsolution}
		[\partial^k,\bar{H}]=M-[\partial^k,\tilde{H}]
	    \end{equation}
		Notice that terms on the right hand side are already known, and they contains only terms with $B_j$ (i.e. $Sdeg_A=-\infty$). There are three possible cases:
		\begin{enumerate}
			\item $m\ge k$, in such case no $B_j$ can appear on the right hand side of \eqref{E:(*)}. So we can simply put $h_{0,0;m-k},\dots,h_{0,k-1;m-k}$ all 0. We will get $[\partial^k,H]=M$.
			\item $m\leq 0$, in such case, notice that 
			$$
		    D^{m-k}\partial^k=\int^{-m+k}\partial^k=(1-B_{-m+1}-\cdots-B_{-m+k})
			$$
			So there might be $k$ summands  of $B_j$  in the left hand side of equation \eqref{E:Bj part in ODEsolution}. On the other hand, we know there might be at most $k$ summands with $B_j$ ($B_{-m+1},\dots,B_{-m+k}$) on the right hand side of the equation (according to the uniqueness of HCPC). By the same reason  we know the coefficients at $B_{-m+1},\dots,B_{-m+k}$ on both sides must be equal to each other respectively. Hence we get $k$ linear equations for $h_{0,0;m-k},\dots,h_{0,k-1;m-k}$.
			
			Calculating the coefficients at $B_{-m+j}$ on both sides, we have 
			$$
			\sum_{i=0}^{k-1}\xi^{i(j-m-1)}h_{0,i;m-k}=b_{j-m}
			$$
			where $b_{1-m},\dots,b_{k-m}$ are already known on the right hand side. Collecting this linear system, we find we have $k$ variables and $k$  equations, the coefficient matrix is a Vandermonde matrix (hence always of full rank). Thus we can always solve the equation and find the coefficients $h_{0,0;m-k},\dots,h_{0,k-1;m-k}$. After that we get $[\partial^k,\tilde{H}+\bar{H}]=M$.
			\item $0<m<k$, in this case the same arguments as in case b) work, and 
			we omit the details. We only need to notice that here we have $k$ variables but $(k-m)$ equations, and the coefficient matrix becomes the full rank sub-matrix of the Vandermonde matrix. So the solution exists but might be not unique.  
		\end{enumerate}
	
\end{proof}

\subsection{Some necessary conditions on the Schur operator}
\label{S:necessary conditions on the Schur}

Let $Q\in D_1$ be a normalized differential operator, i.e $Q=\partial^q+(\dots)\partial^{q-2}+\dots$. According to Propositions \ref{T:A.Z 7.2}, \ref{P:normalized_Schur} there exists $S\in \hat{D}_{1}^{sym}$, such that $S^{-1}QS=\partial^q$, where $S=S_{0}+S_{-1}+\dots$, with $S_{0}=1,S_{-1}=0$. 

In this section we establish several necessary conditions on $S$. Namely, we'll show that all homogeneous components of $S$, $S^{-1}$ are HCPs with $Sdeg_B=-\infty$, and the function $Sdeg_A$ has a linear upper and lower bound. 

From now on we fix $k=q=\Ord(Q)$ (recall that in our case $\Ord(Q)=deg (Q)$). Let $\xi$ be a $q$-th primitive root of $1$, $\tilde{K}=K[\xi ]$.

\begin{Def}
\label{D:total_B}
Let $H$ be an element from $Hcpc(k)$. We'll say $H$ is {\it totally free of $B_j$} if $Sdeg_B(HD^p)=-\infty$ for all $p\in\dz$\footnote{Note that definition of $Sdeg_B$ is sensitive with respect to the multiplication by $D^p$. For example, $H=\int$ is a HCP with $Sdeg_B(H)=-\infty$, but $HD= 1-\delta$ is a HCP with $Sdeg_B(HD)=1$. In this example $H$ doesn't contain $B_j$, but is not totally free of $B_j$.}. 
\end{Def}

\begin{ex}
\label{Ex:dif_op_total_B}
It's easy to see that an operator $P\in D_1$ with an invertible highest coefficient (cf.remark \ref{R:ord=deg}) written in G-form is totally free of $B_j$, i.e. all its homogeneous components have this property. 
\end{ex}

\begin{lemma}
\label{L:total_B}
The subset of totally free of $B_j$ elements from $Hcpc(k)$ for a fixed $k$ form a subring in $\hat{D}_1^{sym}\hat{\otimes}_K\tilde{K}$. 
\end{lemma}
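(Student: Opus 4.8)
The plan is to reduce the statement to the two multiplicative closure facts already at our disposal: Lemma \ref{L:HCPC} (products of HCPCs are HCPCs, with control on $Sdeg_B$) and the observation that elements totally free of $B_j$ obviously form an additive subgroup, which follows at once from Lemma \ref{L:obvious}. So the only real content is closure under multiplication. Let $H, M \in Hcpc(k)$ both be totally free of $B_j$, i.e. $Sdeg_B(HD^p) = -\infty$ and $Sdeg_B(MD^p) = -\infty$ for every $p \in \dz$. We must show $Sdeg_B((HM)D^p) = -\infty$ for every $p$.

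First I would note that it suffices to treat the case where $H$ and $M$ are \emph{homogeneous} HCPs (so in $Hcpc(k)$ each is a single HCP of some fixed order), since an arbitrary HCPC is a finite sum of these, and $Sdeg_B$ of a sum is bounded by the max of the $Sdeg_B$'s of the summands (Lemma \ref{L:obvious}); being totally free of $B_j$ passes to each homogeneous component because the homogeneous decomposition of $HD^p$ splits along orders. Next, writing $H$ in G-form, the key point is that $H$ totally free of $B_j$ forces $Sdeg_B(H) = -\infty$ itself (take $p=0$), i.e. the G-form of $H$ contains \emph{no} $B_j$ terms at all: $H = (\sum_{0\le i<k}\sum_{0\le l\le d_i} f'_{l,i;r}\Gamma_l A_i) D^r$, and similarly for $M$. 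So I may assume $H$ and $M$ are each finite sums of monomials of the shape $\Gamma_m A_j D^r$.

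Now I would invoke the explicit multiplication formulae from the proof of Lemma \ref{L:HCPC} — specifically case 4, equation \eqref{E:AA}, which computes $(\Gamma_m A_i D^u)(\Gamma_n A_j D^v)$. The output is $a\,\xi^{uj}\sum_l \binom{n}{l} u^{n-l}\Gamma_{l+m}A_{i+j}D^{u+v}$ plus, \emph{only in the case $u<0$, $v>0$}, a correction term $\sum_s \lambda_s B_s D^{u+v}$ coming from $D^u D^v = (1 - \sum_{s=-u-v+1}^{-u} B_s)D^{u+v}$. So $B_j$ terms \emph{can} appear in $HM$ even when neither factor has any. The point is to show that whatever $B_j$ terms appear are killed when one multiplies by any $D^p$ on the right, and this is precisely where the hypothesis that \emph{both} factors are totally free of $B_j$ (not merely free of $B_j$) must be used. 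Concretely: consider $(HM)D^p$ for arbitrary $p$. Using associativity, $(HM)D^p = H(MD^p)$. Since $M$ is totally free of $B_j$, $MD^p$ is again a sum of $\Gamma_n A_j D^{v+p}$ monomials with no $B_j$ terms, and is itself totally free of $B_j$ (because $(MD^p)D^{p'} = M D^{p+p'}$). So it suffices to show: if $H$ and $M'$ are homogeneous, in G-form with no $B_j$ terms, and both totally free of $B_j$, then $HM'$ has no $B_j$ terms. But from \eqref{E:AA} the $B_j$ terms in a monomial product arise exactly when $u<0$ and $v>0$ where $u=\Ord(H)$, $v=\Ord(M')$. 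I would then argue that if $\Ord(H)=u<0$, then $H D^{-u}$ has order $0$; the assumption that $H$ is totally free of $B_j$ says $Sdeg_B(HD^{-u})=-\infty$, and this is the content that rules out, after right-multiplication cancellation, the spurious $B_s$ in the product.

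The main obstacle I anticipate is organizing this cancellation argument cleanly: one must show that the $B_s D^{u+v}$ terms produced in \eqref{E:AA}, after multiplying on the right by a suitable $D^{p}$ (namely any $p \ge -(u+v)$, say), either vanish or recombine with the $B_j$-free part, and tracking this requires unwinding that "totally free of $B_j$" for $H$ is equivalent (via Lemma \ref{L:1}, items 3 and 9, governing $D^u B_j = B_{j-u}D^u$ and $\int^m \partial^m = 1 - \sum B_k$) to a statement about which $B_s$'s survive right-multiplication. An alternative, possibly cleaner route: prove directly that $H$ (homogeneous, G-form, no $B_j$, totally free of $B_j$) implies $\Ord(H)\ge 0$ — because if $\Ord(H)=u<0$ then $H = (\sum \Gamma_l A_i)\int^{-u}$, and using Lemma \ref{L:1}(3) one finds $H\partial^{-u} = (\sum\Gamma_l A_i)(1-\sum_{s=1}^{-u}B_s)\partial^{\,0}$ which does contain $B_j$ terms unless $\sum_i\Gamma_l A_i$ has a very special form — in fact this forces the relations of Lemma \ref{L:homog_comm}, and one would need to check those relations alone cannot make $H$ totally free unless $H=0$ or $u=0$. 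Granting that "$H,M$ totally free of $B_j$ $\Rightarrow \Ord(H),\Ord(M)\ge 0$ (or the relevant part vanishes)", case 4 of Lemma \ref{L:HCPC} never triggers the $u<0,v>0$ branch, so $HM$ has no $B_j$ terms, and applying the same to $(HM)D^p = H(MD^p)$ for all $p$ (using that $MD^p$ stays totally free of $B_j$) finishes the proof. I would present whichever of these two routes turns out shorter; the bookkeeping with $D^u B_j = B_{j-u}D^u$ is the part to be careful about.
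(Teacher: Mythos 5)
Your setup (additive closure via lemma \ref{L:obvious}, reduction by associativity $(HM)D^p=H(MD^p)$ to products of $H$ with the $B_j$-free monomials of $MD^p$) matches the paper's skeleton, but the decisive multiplicative step is not actually carried out, and both of your proposed ways to finish have problems. In route 1 you correctly locate the danger in the $u<0,\ v>0$ branch of \eqref{E:AA}, but you only assert that total freeness of $H$ "rules out, after right-multiplication cancellation, the spurious $B_s$", without an argument — and you yourself flag this as the unorganized part. The paper closes exactly this gap by a commutation trick you don't use: for a monomial $c\Gamma_nA_qD^v$ of $MD^p$ one has, by lemma \ref{L:1} and proposition \ref{T:A.Z 7.1}, $H(c\Gamma_{n}A_{q}D^{v})= c(x\partial +u)^n\xi^{uq}A_q(HD^v)$ (here $u=\Ord(H)$), so the whole question reduces to $HD^v$ being $B_j$-free, which is precisely the hypothesis that $H$ is \emph{totally} free of $B_j$; left multiplication by polynomials in $\Gamma_1$ and $A_q$ cannot create $B_j$ terms. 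Route 2 is worse: the claim that a nonzero $B_j$-free, totally free HCP must have $\Ord(H)\ge 0$ is false. For $k=2$ the element $H=(1-A_1)\int$ has order $-1$, contains no $B_j$, and is totally free of $B_j$ (e.g. $H\partial=(1-A_1)(1-B_1)=1-A_1$ since $(1-A_1)B_1=0$); more generally, negative-order totally free HCPs are exactly what lemma \ref{L:homog_comm}, proposition \ref{P:lower estimate} and proposition \ref{P:Si} (the components $S_{-t}$ of Schur operators) are about. So that alternative cannot be repaired.

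A smaller but real inaccuracy: your parenthetical "$(MD^p)D^{p'}=MD^{p+p'}$, hence $MD^p$ is itself totally free of $B_j$" fails for $p<0$, since $D^pD^{p'}\neq D^{p+p'}$ when $p<0<p'$; e.g. $M=1$, $p=-1$ gives $MD^p=\int$, and $\int\partial=1-B_1$, so $\int$ is not totally free of $B_j$. Fortunately you do not need $MD^p$ to be totally free: the definition of total freeness of $M$ already gives that the G-form of $MD^p$ contains no $B_j$, and that weaker fact, combined with the commutation identity above applied to the \emph{left} factor $H$, is what the paper's proof uses. With that identity inserted, your reduction does become a complete proof; without it (or an explicit cancellation computation for the $B_s$ terms of \eqref{E:AA}), the argument has a genuine hole.
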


\begin{proof}
Clearly, this subset form a linear subspace (cf. lemma \ref{L:obvious}). To prove the claim it suffices to prove that the product of two totally free of $B_j$ HCPs $H_1, H_2$ is totally free of $B_j$. 

Assume $H_1=(\sum_{0\leq i<k}\sum_{0\leq l\leq d_i} f'_{l,i;r}\Gamma_lA_i)D^u$ is written in the G-form. 
Since $H_2$ is totally free of $B_j$, the G-form of $H_2D^{p}$ does not contain $B_j$ for any $p\in \dz$. Therefore, to show that $H_1H_2$ is totally free of $B_j$ it suffices to show that the product of $H_1$ with any monomial $c\Gamma_{n}A_{q}D^{v}$ does not contain $B_j$. 

By lemma \ref{L:1} we have 
$$
H_1(c\Gamma_{n}A_{q}D^{v})= c(x\partial +u)^n\xi^{uq}A_q(H_1D^v),
$$
and $H_1D^v$ does not contain $B_j$, hence $H_1(c\Gamma_{n}A_{q}D^{v})$ does not contain $B_j$ and we are done.
\end{proof}

\begin{Prop}
\label{P:lower estimate}
Suppose 
$$
0\neq H=\sum_{i=0}^{k-1}\sum_{m=0}^da_{m,i}A_i\Gamma_m\int^u.
$$ 	
is a HCP  from $Hcpc(k)$,  with $\Ord(H)=-u<0$. Then $H$ is totally free of $B_j$ iff the linear system of equations on $a_{m,i}$ holds:
\begin{equation}
\label{E:system_on_B_j}
\sum_{i=0}^{k-1}\sum_{m=0}^d\xi^{i(j-1)}(j-1)^ma_{m,i}=0
\end{equation}
for any $1\leq j\leq u$.

Moreover, if $H$ is totally free of $B_j$, then $\frac{u}{k}-1< Sdeg_A(H).$
\end{Prop}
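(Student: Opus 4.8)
The plan is to treat the two assertions separately: the equivalence first, and then the inequality as a consequence of it together with a (confluent) Vandermonde argument.

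\textbf{The equivalence.} Write $H=H'\int^u$ with $H':=\sum_{i,m}a_{m,i}A_i\Gamma_m$, an operator of order $0$ that is visibly totally free of $B_j$ (it carries no $\delta$). For $p\ge u$, using $\int^u\partial^u=1-\sum_{s=1}^u B_s$ (Lemma \ref{L:1}, item 3) and $A_i\Gamma_m B_s=\xi^{i(s-1)}(s-1)^m B_s$ (items 1, 6, 8), one computes
$$
HD^p = H'\partial^{p-u}-\sum_{s=1}^u\Big(\sum_{i=0}^{k-1}\sum_{m=0}^d a_{m,i}\,\xi^{i(s-1)}(s-1)^m\Big)B_s\partial^{p-u},
$$
where $H'\partial^{p-u}$ is totally free of $B_j$ and each $B_s\partial^{p-u}$ ($1\le s\le u$) is nonzero. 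Hence $Sdeg_B(HD^p)=-\infty$ for one (equivalently, every) $p\ge u$ precisely when every inner coefficient vanishes, which is exactly system \eqref{E:system_on_B_j}. For the converse I would check that these same relations already annihilate the $B_j$-part of $HD^p$ for \emph{every} $p\in\dz$: for $0\le p<u$ write $\int^u\partial^p=\int^{u-p}\big(1-\sum_{s=1}^p B_s\big)$ and push $\int^{u-p}$ past $B_s$ by item 9 ($\int^{u-p}B_s=B_{s+u-p}\int^{u-p}$), so the only $B_j$ produced have index $j=s+u-p\in\{u-p+1,\dots,u\}\subseteq\{1,\dots,u\}$; and for $p<0$ we have $HD^p=H'\int^{u-p}$, which contains no $\partial$ and hence no $B_j$.

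\textbf{The inequality.} Put $d_0:=Sdeg_A(H)\ge 0$ (some $a_{d_0,i}\ne 0$ since $H\ne 0$) and $N:=k(d_0+1)$. Suppose, for contradiction, $u\ge N$. By the equivalence, system \eqref{E:system_on_B_j} holds; with $t:=j-1$ its equations for $j=1,\dots,N$ read $F(t):=\sum_{i=0}^{k-1}\sum_{m=0}^{d_0}a_{m,i}\,t^m\xi^{it}=0$ for $t=0,1,\dots,N-1$. The sequence $\big(F(t)\big)_{t\in\dz}$ satisfies the linear recurrence whose characteristic polynomial is $\prod_{i=0}^{k-1}(X-\xi^i)^{d_0+1}$, of degree $N$ with nonzero constant term (the $\xi^i$ are distinct since $\xi$ is a primitive $k$-th root of unity). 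A solution of such a recurrence vanishing at $N$ consecutive integers is identically zero, so $F\equiv 0$; then linear independence over $\tilde K$ of the exponential monomials $\{t^m\xi^{it}\}_{0\le i<k,\,0\le m\le d_0}$ forces all $a_{m,i}=0$, i.e. $H=0$ — a contradiction. Hence $u\le N-1<k(d_0+1)$, that is $Sdeg_A(H)=d_0>\tfrac{u}{k}-1$.

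\textbf{Main obstacle.} The delicate point is the converse direction of the equivalence: one must verify that, as $p$ ranges over all of $\dz$, the interaction of $\int^u$ with the powers of $\partial$ introduced by $D^p$ never creates a $B_j$ with index outside $\{1,\dots,u\}$, so that the finitely many relations \eqref{E:system_on_B_j} genuinely suffice. This is pure bookkeeping with items 3 and 9 of Lemma \ref{L:1}, but it is precisely where sign and index errors would slip in. An equivalent route to the inequality avoids the recurrence language: the $N\times N$ coefficient matrix $\big[t^m\xi^{it}\big]$ (rows $t=0,\dots,N-1$) is a confluent Vandermonde matrix with distinct nodes $\xi^0,\dots,\xi^{k-1}$, hence invertible, and the recurrence argument above just packages this nonvanishing without computing the determinant.
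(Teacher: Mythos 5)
Your argument is correct. The equivalence is handled essentially as in the paper: the paper computes $H\partial^u=H'-\sum_{j=1}^u\bigl(\sum_{i,m}a_{m,i}\xi^{i(j-1)}(j-1)^m\bigr)B_j$, invokes uniqueness of the HCP form, and then disposes of all other exponents via $H\partial^l=(H\partial^u)D^{l-u}$; your direct bookkeeping of $HD^p$ in the three ranges $p\ge u$, $0\le p<u$, $p<0$, with the index check $j=s+u-p\in\{1,\dots,u\}$, is the same computation organized slightly differently (and it makes explicit the point the paper's one-line converse uses silently). Where you genuinely diverge is the inequality. The paper proves Lemma \ref{L:Tkdu full rank when u=(d+1)k} — that the square coefficient matrix of \eqref{E:system_on_B_j} for $u=(d+1)k$ has nonzero determinant — by a two-stage explicit Vandermonde reduction (first in the grouped variables $\eta_0,\dots,\eta_d$ using $\xi^k=1$, then in the $\alpha$'s), and then deduces injectivity of the induced map $\tilde K^{k(d+1)}\to\tilde K^{u}$ when $u\ge (d+1)k$. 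You instead set $d_0=Sdeg_A(H)$, $N=k(d_0+1)$, read the left-hand sides of \eqref{E:system_on_B_j} as the values $F(0),\dots,F(N-1)$ of the exponential-polynomial sequence $F(t)=\sum_{i,m}a_{m,i}t^m\xi^{it}$, observe that $F$ satisfies the order-$N$ linear recurrence with characteristic polynomial $\prod_{i}(X-\xi^i)^{d_0+1}$ so that vanishing at $N$ consecutive integers forces $F\equiv 0$, and conclude $a_{m,i}=0$ from the linear independence of the sequences $t^m\xi^{it}$ (distinct nonzero nodes, characteristic zero). This is the same nonsingularity fact — invertibility of a generalized Vandermonde matrix — packaged through linear recurrences rather than a determinant computation; it is shorter and less error-prone, at the cost of quoting the standard independence of exponential-polynomial sequences, which is exactly what the paper's hands-on lemma establishes in this special case. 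Both routes give $u<k(Sdeg_A(H)+1)$, i.e. the strict inequality of the proposition.
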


\begin{proof}
    Suppose  $H$ is totally free of $B_j$. Since 
\begin{multline*}
	H\partial^u=\sum_{i=0}^{k-1}\sum_{m=0}^da_{m,i}A_i\Gamma_m(1-B_1-\cdots-B_{u})
	=\sum_{i=0}^{k-1}\sum_{m=0}^da_{m,i}A_i\Gamma_m-\sum_{j=1}^{u}\sum_{i=0}^{k-1}\sum_{m=0}^d\xi^{i(j-1)}(j-1)^ma_{m,i}B_j
\end{multline*}
is free of $B_j$, we get a linear equation system \eqref{E:system_on_B_j}. Vice versa, this system implies the total freeness of $B_j$: indeed, for any $l\ge 0$ $H\partial^l= H\partial^u D^{l-u}$, hence it is free of $B_j$ (for $l<0$  no terms with $B_j$ can appear). 

The second statement requires more technical arguments.

\begin{lemma}
	\label{L:Tkdu full rank when u=(d+1)k}
	Suppose $T$ is the coefficient matrix of  system \eqref{E:system_on_B_j}. If $u=(d+1)k$, $T$ is a square matrix and we have $det(T)\neq 0$.
\end{lemma}

\begin{proof}
	Note that $T$ is in the shape of 
	$$
	\begin{pmatrix}
		1 & 1 & 1 & \cdots & 1 \\
		1 & \xi & \xi^2 & \cdots & \xi^{u-1} \\
		\cdots & \cdots & \cdots & \cdots & \cdots \\
		1 & \xi^{k-1} & \xi^{2(k-1)} & \cdots & \xi^{(k-1)(u-1)} \\
		\cdots & \cdots & \cdots & \cdots & \cdots \\
		0 & 1 & 2^d & \cdots & (u-1)^d \\
		0 & \xi & \xi^22^d & \cdots & \xi^{u-1}(u-1)^d \\
		\cdots & \cdots & \cdots & \cdots & \cdots \\
		0 & \xi^{k-1} & \xi^{2(k-1)}2^d & \cdots & \xi^{(k-1)(u-1)}(u-1)^d
	\end{pmatrix}
	$$
	suppose there exist $\overrightarrow{\alpha}=(\alpha_1,\dots,\alpha_{(d+1)k}),\alpha_l\in \tilde{K}, 1\leq l\leq k(d+1)$, such that 
	$$
	\overrightarrow{\alpha}T=0
	$$
	Then we have 
	\begin{multline*}
	(\alpha_{(d+1)k}\xi^{(k-1)(u-1)}+\cdots+\alpha_{(d+1)k-k+1})(u-1)^d+(\cdots)(u-1)^{d-1}
	+\cdots+(\alpha_k\xi^{(k-1)(u-1)}+\cdots+\alpha_1)=0
	\end{multline*}
	and 
	\begin{multline*}
		(\alpha_{(d+1)k}\xi^{(k-1)(u-2)}+\cdots+\alpha_{(d+1)k-k+1})(u-2)^d+(\cdots)(u-2)^{d-1}
		+\cdots+(\alpha_k\xi^{(k-1)(u-2)}+\cdots+\alpha_1)=0
	\end{multline*}
	\dots
\begin{multline*}
	(\alpha_{(d+1)k}\xi^{k-1}+\cdots+\alpha_{(d+1)k-k+1})+(\cdots)
	+\cdots+(\alpha_k\xi^{k-1}+\cdots+\alpha_1)=0
\end{multline*}

Now denote 
$$
\begin{cases}
\eta_0=\alpha_{(d+1)k}\xi^{(k-1)(u-1)}+\cdots+\alpha_{(d+1)k-k+1}
\\\eta_1=\alpha_{(d+1)k-k}\xi^{(k-1)(u-1)}+\cdots+\alpha_{(d+1)k-2k+1}
\\\cdots
\\\eta_d=\alpha_k\xi^{(k-1)(u-1)}+\cdots+\alpha_1
\end{cases}
$$
Notice that $\xi^k=1$, we have 
$$
\alpha_{(d+1)k}\xi^{(k-1)(u-k-1)}=\alpha_{(d+1)k}\xi^{(k-1)(u-1)}
$$
Hence we know 
$$
\begin{cases}
	(u-1)^d\eta_0+(u-1)^{d-1}+\cdots+\eta_d=0
	\\(u-k-1)^d\eta_0+(u-k-1)^{d-1}+\cdots+\eta_d=0
	\\\cdots
	\\(u-dk-1)^d\eta_0+(u-dk-1)^{d-2}\eta_1+\cdots+\eta_d=0
\end{cases}
$$
Considering $\eta_0,\eta_1,\dots,\eta_d$ as variables, we get a linear equation systems equipped with Vandermonde's coefficient matrix. Hence we know $\eta_0=\eta_1=\cdots=\eta_d=0$. In the same way, if we denote
$$
\begin{cases}
	\tilde{\eta}_0=\alpha_{(d+1)k}\xi^{(k-1)(u-2)}+\cdots+\alpha_{(d+1)k-k+1}
	\\\tilde{\eta}_1=\alpha_{(d+1)k-k}\xi^{(k-1)(u-2)}+\cdots+\alpha_{(d+1)k-2k+1}
	\\\cdots
	\\\tilde{\eta}_d=\alpha_k\xi^{(k-1)(u-2)}+\cdots+\alpha_1
\end{cases}
$$
we will also get $\tilde{\eta}_0=\tilde{\eta}_1=\cdots=\tilde{\eta}_d=0$. We play with this game for $k$ times, there will be $k$ numbers of equation for $(\alpha_{1+mk},\alpha_{2+mk},\dots,\alpha_{k+mk})$ for any $0\leq m\leq d$, in the form of 
$$
(\alpha_{1+mk},\alpha_{2+mk},\dots,\alpha_{k+mk})\begin{pmatrix}
	1 & 1 & \cdots & 1 \\
	1 & \xi^1 & \cdots & \xi^{k-1} \\
	\cdots & \cdots & \cdots & \cdots \\
	1 & \xi^{k-1} & \cdots & \xi^{(k-1)(k-1)}
\end{pmatrix}=0
$$
The coefficient matrix is again in Vandermonde's shape, so there are only $0$ solutions, this means $\overrightarrow{\alpha}=0$. Hence $det(T)\neq 0$.
\end{proof}

	Summing up, $T$ induces a linear map: 
	$$
	T:\tilde{K}^{k(d+1)} \rightarrow \tilde{K}^{u}.
	$$
If $d=Sdeg_A(H)\leq  \frac{u}{k}-1$, i.e. $(d+1)k\leq  u$, then  $T$ is injective (as its matrix consists of linearly independent raws by lemma \ref{L:Tkdu full rank when u=(d+1)k}), this means the linear equation system has  only zero solution, this contradicts to $H\neq 0$.
\end{proof}

\begin{cor}
\label{C:centraliser_free_of_B_j}
Let $C(\partial^k)\subset \hat{D}_1^{sym}\hat{\otimes}_K\tilde{K}$ be the centraliser of $\partial^k$, and $H\in C(\partial^k)$ be a HCP. Then $H$ is totally free of $B_j$.
\end{cor}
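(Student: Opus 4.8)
The plan is to combine the explicit description of the centraliser of $\partial^k$ with the multiplication rules for the operators $B_j$ from Lemma \ref{L:1}. Since $H \in C(\partial^k)$ is a HCP, it is homogeneous of some order $r$, and the first step is to pin down its shape: by Proposition \ref{T:A.Z 7.1}, item 2 --- or, when $r<0$, by the sharper Lemma \ref{L:homog_comm} --- the operator $H$ can be written as $H = \bigl(\sum_{j=0}^{k-1} c_j A_j\bigr) D^r$ with \emph{scalar} coefficients $c_j \in \tilde K$, and moreover, if $-k < r < 0$, these coefficients satisfy the relations $\sum_{j=0}^{k-1} c_j \xi^{j(q-1)} = 0$ for $q = 1, \dots, -r$ (if $r \le -k$ then $H = 0$ and there is nothing to prove). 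In particular $H$, written in this form, is already a G-form with $Sdeg_B(H) = -\infty$, so the case $p=0$ is free; what remains is $Sdeg_B(H D^p) = -\infty$ for every $p \in \dz$.

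Next I would reduce $H D^p = \bigl(\sum_j c_j A_j\bigr) D^r D^p$ by pushing $D^p$ to the right past $D^r$. Using $\partial\int = 1$ one checks that $D^r D^p = D^{r+p}$ outright whenever $r \ge 0$ (for any $p$) or $p \le 0$ (for any $r$); in all these cases $H D^p = \bigl(\sum_j c_j A_j\bigr) D^{r+p}$ is manifestly free of $B_j$. The only configuration in which $B_j$-terms can appear is $r < 0$, $p > 0$, and this is exactly where the centraliser relations will be used.

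In that remaining case set $m := -r \in \{1, \dots, k-1\}$. I would expand $\int^m \partial^p$ via Lemma \ref{L:1}, item 3, as $\bigl(1 - \sum_{s=1}^m B_s\bigr)\partial^{p-m}$ when $p \ge m$, and as $\int^{m-p}\bigl(1 - \sum_{s=1}^p B_s\bigr)$ when $p < m$. In either subcase the ``main'' summand contributes $\bigl(\sum_j c_j A_j\bigr) D^{r+p}$ (free of $B_j$), and each remaining summand, after commuting the relevant $B_s$ to the left past the leftover power of $D$ by item 9, takes the form $\bigl(\sum_j c_j A_j\bigr) B_t D^{\bullet}$ with $1 \le t \le m$. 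By Lemma \ref{L:1}, item 1, $\bigl(\sum_j c_j A_j\bigr) B_t = \bigl(\sum_j c_j \xi^{j(t-1)}\bigr) B_t$, which vanishes precisely by the relation with $q = t$; hence all these summands are zero and again $H D^p = \bigl(\sum_j c_j A_j\bigr) D^{r+p}$. Therefore $Sdeg_B(H D^p) = -\infty$ for all $p \in \dz$, i.e. $H$ is totally free of $B_j$.

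The only delicate point --- and the step I expect to require the most care --- is the index bookkeeping in this last case: checking that, in each of the two subcases $p \ge m$ and $p < m$, after all the commutations the $B$-index $t$ really does stay inside $\{1, \dots, m\}$, so that the centraliser relations $\sum_j c_j \xi^{j(q-1)} = 0$ are available for exactly the values of $q$ that occur. This is routine but slightly fiddly; nothing beyond Lemma \ref{L:1}, Proposition \ref{T:A.Z 7.1} and Lemma \ref{L:homog_comm} is needed.
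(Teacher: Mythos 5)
Your proposal is correct and takes essentially the same route as the paper: the paper deduces the corollary from Lemma \ref{L:homog_comm} combined with the criterion of Proposition \ref{P:lower estimate}, and your argument simply re-derives the sufficiency direction of that criterion by hand, expanding $\int^{m}\partial^{p}$ via Lemma \ref{L:1} and killing the $B$-terms with the relations $\sum_j c_j\xi^{j(q-1)}=0$, $1\le q\le m$. The index bookkeeping you flag does close correctly (in the subcase $p<m$ one gets $t=s+m-p\in\{m-p+1,\dots,m\}$), so nothing is missing.
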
 

\begin{proof}
The proof follows immediately from lemma \ref{L:homog_comm} and proposition \ref{P:lower estimate}.
\end{proof}

\begin{Prop}
	\label{P:Si}
	Let $Q\in D_1$ be a normalized operator, assume $S\in \hat{D}_{1}^{sym}$ is a Schur operator for $Q$, i.e.  $S^{-1}QS=\partial^q$, such that   $S_{0}=1,S_{-1}=0$. Then we have 
	\begin{enumerate}
		\item $S_{-t}$ is a HCP from $Hcpc(q)$ for any $t\ge 0$ (i.e. it can be written as a HCP in $\hat{D}_1^{sym}\hat{\otimes}_K\tilde{K}$.
		\item If $S_{-t}\neq 0$ then $\frac{t}{q}-1< Sdeg_A(S_{-t})<t$ for any $t> 0$.
		\item $S_{-t}$ is totally free of $B_j$ for any $t\ge 0$.
		\end{enumerate}
\end{Prop}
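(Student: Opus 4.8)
The plan is to prove the three assertions by a single induction on $t$, using the defining relation $QS = S\partial^q$ broken into homogeneous components, exactly in the spirit of the proofs of Propositions \ref{T:A.Z 7.2} and \ref{P:normalized_Schur}. Write $S = \sum_{t\ge 0} S_{-t}$ with $S_0 = 1$, $S_{-1}=0$, and let $Q = \partial^q + c_{q-2}\partial^{q-2} + \ldots + c_0$ be normalized, so that the homogeneous decomposition of $Q$ has components $Q_q = \partial^q$ and $Q_{q-i}$ for $2 \le i \le q$, each of which is a differential operator with an invertible (constant) highest coefficient. By Example \ref{Ex:HCP-dif-op} and Example \ref{Ex:dif_op_total_B} every homogeneous component of $Q$ is a HCP in $Hcpc(q)$, totally free of $B_j$, and does not contain $A_i$. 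Comparing the components of $QS = S\partial^q$ of order $q-t$ gives
$$
[\partial^q, S_{-t}] = \partial^q(S_{-t}) - (S_{-t})\partial^q = \sum_{i=2}^{\min(q,t)} Q_{q-i} S_{-(t-i)} - \big(\text{terms from } S_{-t}\partial^q \text{ other than the leading one}\big),
$$
which I will organize more carefully as
$$
[\partial^q, S_{-t}] = M_t, \qquad M_t := -\sum_{i=2}^{q} Q_{q-i}\, S_{-(t-i)},
$$
where the right side involves only $S_{-s}$ with $s < t$ (here $Q_{q-i}$ has order $q-i \le q-2$, so it genuinely lowers the index; one should double-check the bookkeeping against the conventions of Proposition \ref{P:normalized_Schur}, but the shape $[\partial^q,S_{-t}]=M_t$ with $M_t$ depending only on lower terms is exactly what those proofs establish).

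First I would prove assertion (1) and assertion (3) together by induction. The base cases $S_0=1$ and $S_{-1}=0$ are trivially HCPs totally free of $B_j$. For the inductive step, assume $S_{-s}$ is a HCP from $Hcpc(q)$ totally free of $B_j$ for all $s<t$. By Lemma \ref{L:HCPC} and Lemma \ref{L:total_B}, the product $Q_{q-i} S_{-(t-i)}$ is a HCPC, and since both factors are totally free of $B_j$ (the components of $Q$ by Example \ref{Ex:dif_op_total_B}, and $S_{-(t-i)}$ by induction), Lemma \ref{L:total_B} gives that $M_t$ is a HCPC totally free of $B_j$. Since $M_t$ is in fact homogeneous of order $q-t < q$, Lemma \ref{L:ODE sol 2} produces \emph{some} HCPC solution $H$ of $[\partial^q, H] = M_t$; by Lemma \ref{L:ODE sol}, any HCPC solution has $Sdeg_B = Sdeg_B(M_t) = -\infty$, so every HCPC solution is free of $B_j$, and moreover $S_{-t} = H + (\text{homogeneous element commuting with }\partial^q)$. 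The correction term lies in $C(\partial^q)$ and is a HCP (by Proposition \ref{T:A.Z 7.1}(2) together with Lemma \ref{L:homog_comm}), hence totally free of $B_j$ by Corollary \ref{C:centraliser_free_of_B_j}. Therefore $S_{-t}$ is a HCP from $Hcpc(q)$, and — being a sum of a totally-free-of-$B_j$ HCPC and an element of $C(\partial^q)$ — it is totally free of $B_j$ by Lemma \ref{L:total_B}. This settles (1) and (3).

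It remains to prove the two-sided bound (2): $\frac{t}{q} - 1 < Sdeg_A(S_{-t}) < t$ whenever $S_{-t}\neq 0$. The lower bound is essentially immediate from Proposition \ref{P:lower estimate}: $S_{-t}$ is a nonzero HCP of order $-t<0$ which is totally free of $B_j$ by part (3), so that proposition directly gives $\frac{t}{q}-1 < Sdeg_A(S_{-t})$. For the upper bound I would again argue by induction on $t$, now tracking $Sdeg_A$ through the relation $[\partial^q,S_{-t}] = M_t$. By Lemma \ref{L:HCPC}(1), $Sdeg_A(Q_{q-i}S_{-(t-i)}) \le Sdeg_A(Q_{q-i}) + Sdeg_A(S_{-(t-i)}) = 0 + Sdeg_A(S_{-(t-i)}) < t-i \le t-2$ by the inductive hypothesis (using $Sdeg_A(Q_{q-i})=0$ since the components of $Q$ contain no $A_i$ and are polynomials in $\partial$ with constant coefficients, so their $x$-degree is $0$). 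Hence $Sdeg_A(M_t) < t-2$, or $M_t=0$. If $M_t\neq 0$, Lemma \ref{L:ODE sol}(1) gives that a HCPC solution $H$ has $Sdeg_A(H) = Sdeg_A(M_t)+1 < t-1$; if $M_t=0$, Lemma \ref{L:ODE sol}(2) gives $Sdeg_A(H)\in\{0,-\infty\}$. In either case the particular solution has $Sdeg_A < t-1 \le t$; adding the correction term from $C(\partial^q)$ — which by Lemma \ref{L:homog_comm} has the form $(\sum_j c_j A_j)\int^{t}$, i.e. $Sdeg_A = 0$ (its $\Gamma$-degree is zero) — does not raise $Sdeg_A$ beyond $\max\{Sdeg_A(H),0\} < t$. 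This gives $Sdeg_A(S_{-t}) < t$ and completes the induction.

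\medskip

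\textbf{Main obstacle.} The delicate point is the bookkeeping in the homogeneous expansion of $QS=S\partial^q$: one must verify that the ``error term'' $M_t$ genuinely involves only $S_{-s}$ with $s<t$ and that it is a HCPC (so that Lemmas \ref{L:ODE sol}, \ref{L:ODE sol 2} apply), and one must be careful that $Sdeg_A(Q_{q-i}) = 0$ and that $Q$ contains no $A_i$ — these are exactly what lets the estimate $Sdeg_A(M_t) \le \max_i Sdeg_A(S_{-(t-i)})$ propagate the bound $Sdeg_A(S_{-s})<s$ without loss. A secondary subtlety is that Lemma \ref{L:ODE sol 2} only guarantees \emph{existence} of a HCPC solution, not uniqueness, so one must invoke Proposition \ref{T:A.Z 7.2}/\ref{P:normalized_Schur} to know that the actual $S_{-t}$ coming from the Schur construction is \emph{one} such solution (differing from the ``minimal'' one by an element of $C(\partial^q)$), and then use Corollary \ref{C:centraliser_free_of_B_j} and Lemma \ref{L:homog_comm} to control that ambiguity — this is where assertion (3) feeds back into assertion (2).
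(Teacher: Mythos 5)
Your overall strategy (induction on $t$ via the homogeneous components of $QS=S\partial^q$, then Lemmas \ref{L:HCPC}, \ref{L:ODE sol} and Proposition \ref{P:lower estimate}) is the same as the paper's, but there is a genuine gap at the crucial point of assertion (3). What Lemma \ref{L:ODE sol} gives you is only $Sdeg_B(S_{-t})=Sdeg_B(M_t)=-\infty$, i.e.\ that no $B_j$ appears in the canonical form of $S_{-t}$; in your last step you silently upgrade this to ``totally free of $B_j$'' (definition \ref{D:total_B}), which is the much stronger requirement that $Sdeg_B(S_{-t}D^p)=-\infty$ for \emph{all} $p\in\dz$, in particular for $p>0$. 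These notions really differ for HCPs of negative order: e.g.\ for $q=2$ the operator $(1+A_1)\int$ contains no $B_j$, yet $(1+A_1)\int\partial=1+A_1-2B_1$, so it is not totally free. Nothing in Lemma \ref{L:ODE sol 2} guarantees that the particular solution $H$ it produces is totally free, and the decomposition $S_{-t}=H+c$ with $c\in C(\partial^q)$ therefore does not yield the claim (all solutions differ by totally free centralizer elements, so either all or none are totally free — but you have not shown that any one is). The paper closes exactly this gap by a separate argument that uses more than the single equation $[\partial^q,S_{-t}]=M_t$: from $S^{-1}Q^pS=\partial^{pq}$ for every $p\in\dn$ one gets $Sdeg_B(S_{-t}\partial^{pq})=-\infty$, and then $\partial^{\lambda}=\partial^{pq}\int^{pq-\lambda}$ for $p\gg0$ gives total freeness. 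Since your lower bound $\frac{t}{q}-1<Sdeg_A(S_{-t})$ is obtained from Proposition \ref{P:lower estimate}, which needs total freeness, this gap also undermines half of assertion (2).

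A second, smaller error: you assert that the homogeneous components $Q_{q-i}$ are ``polynomials in $\partial$ with constant coefficients'' with $Sdeg_A(Q_{q-i})=0$, and that the decomposition of $Q$ stops at $i=q$. A normalized operator has $c_{q-1}=0$ and leading coefficient $1$, but the remaining coefficients are arbitrary power series in $x$; hence $Q_{q-i}=(\eta_{0;q-i}+\dots+\eta_{i-2;q-2}\Gamma_{i-2})\partial^{q-i}$ has $Sdeg_A(Q_{q-i})\le i-2$ (not $0$), components exist for all $i\ge 2$, and $M_t$ must contain all terms $Q_{q-i}S_{-(t-i)}$ with $2\le i\le t$, not only $i\le q$. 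This particular slip is repairable — with $Sdeg_A(Q_{q-i})\le i-2$ and the inductive bound $Sdeg_A(S_{-(t-i)})<t-i$ one still gets $Sdeg_A(M_t)\le t-2$ and hence $Sdeg_A(S_{-t})\le t-1<t$ via Lemma \ref{L:ODE sol} — but as written the upper-bound argument rests on a false description of $Q$, while the total-freeness step is the part that genuinely fails and needs the paper's $Q^p$-argument (or an equivalent substitute).
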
 

\begin{proof}
	The proof is by induction on $t$. All claims are obvious for $S_{-1}$. Suppose they are true from $S_{-1}$ to $S_{-t+1}$. Consider the homogeneous decomposition of $Q$: 
	$$
	Q=Q_q+Q_{q-2}+\ldots =\partial^q+\eta_{0;q-2}\partial^{q-2}+(\eta_{0;q-3}+\eta_{1;q-2}\Gamma_1)\partial^{q-3}+\cdots .
	$$
	Note that $Q_j$ is a HCP for any $j$, $Sdeg_B(Q_j)=-\infty$ for any $j$ and $Sdeg_A(Q_{q-t})\le t-2$ for any $t> 1$. 
	
Since $QS=S\partial^q$, we have the equality of their $(q-t)$ homogeneous components: $(QS)_{q-t}= (S\partial^q)_{q-t}=S_{-t}\partial^q$, i.e. 
\begin{equation}
\label{E:homog}
\partial^q S_{-t}+Q_{q-2} S_{-t+2}+\dots+Q_{q-t}-S_{-t}\partial^q=0.
\end{equation}	
Put $M_t:=-(Q_{q-2} S_{-t+2}+\dots+Q_{q-t})$, so that this equation becomes 
$$[\partial^q,S_{-t}]=M_t.$$ 
Then by lemma \ref{L:HCPC} we get that $M_t$ is a HCP and $Sdeg_A(M_t)\le t-2$ (the only possible term  of degree $t-2$ is $Q_{q-t}$:
 $$
 Q_{q-t}=(\eta_{0;q-t}+\eta_{1;q-t+1}\Gamma_1+\dots+\eta_{t-2;q-2}\Gamma_{t-2})\partial^{q-t},
 $$
whose degree depends on vanishing the coefficient $\eta_{t-2;q-2}$). From formulae \eqref{E:AA}  we get also $Sdeg_B(M_t)=-\infty$. Now applying Lemma \ref{L:ODE sol}, we get $Sdeg_A(S_{-t})\le t-1$ and $Sdeg_B(S_{-t})=-\infty$ (note that since $S_{-t}$ is homogeneous, $S_{-t}$ is a HCP).

In view of formulae \eqref{E:BB}-\eqref{E:AA} we have $Sdeg_B(S_{-t} D^\lambda)=-\infty $  if $\lambda \le 0$. Assume $\lambda >0$. 

For any $p\in\dn$ we have $S^{-1}Q^pS=\partial^{pq}$, and therefore for any $t$ we have equalities similar to \eqref{E:homog}: 	
	$$
	\partial^{pq} S_{-t}+(Q^p)_{pq-2} S_{-t+2}+\dots+ (Q^p)_{pq-t}=S_{-t}\partial^{pq}.
	$$
	Since $Sdeg_B((Q^p)_j)=-\infty$ for all $j$ and $Sdeg_B(S_{-t})=-\infty$ for all $t$, by formulae \eqref{E:AA} we get that the left hand side does not contain $B_j$ (because $\Ord (S_{-t})\le 0$ for any $t$). Therefore, $Sdeg_B(S_{-t}\partial^{pq})=-\infty$ for any $p\in \dn$. Now just note that for any $\lambda >0$ we have $\partial^{\lambda}= \partial^{pq}\int^{pq-\lambda}$ for $p\gg 0$, and therefore $S_{-t}D^{\lambda}=(S_{-t}\partial^{pq})\int^{pq-\lambda }$  does not contain $B_j$ in view of formulae \eqref{E:BB}-\eqref{E:AA}. So, $S_{-t}$ is totally free of $B_j$ for any $t\ge 0$.
	
	The second inequality of item 2 follows from proposition \ref{P:lower estimate}.	    	
\end{proof}

\begin{cor}
	\label{C:Si^(-1) no B}
In the notation of proposition \ref{P:Si} set $\tilde{S}=S^{-1}$. Then we have 
	\begin{enumerate}
	\item $\tilde{S}_0=1$, $\tilde{S}_{-1}=0$.
		\item $\tilde{S}_{-t}$ is a HCP from $Hcpc(q)$ for any $t\ge 0$.
		\item If $S_{-t}\neq 0$ then $\frac{t}{u}-1< Sdeg_A(\tilde{S}_{-t})<t$ for any $t> 0$.
		\item $\tilde{S}_{-t}$ is totally free of $B_j$ for any $t\ge 0$. 
	\end{enumerate}
\end{cor}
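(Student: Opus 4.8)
The statement to prove is Corollary \ref{C:Si^(-1) no B}, which transfers the three structural properties of the Schur operator $S$ established in Proposition \ref{P:Si} to its inverse $\tilde S = S^{-1}$. The natural strategy is to run the same induction on $t$ that was used for $S$, but now applied to $\tilde S$, exploiting the defining relation between $S$ and $\tilde S$ to feed in the already-known information about the homogeneous components $S_{-j}$.

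\textbf{Step 1 (item 1 — initial terms).} Since $S = 1 + S_{-2} + S_{-3} + \ldots$ with $S_0 = 1$ and $S_{-1}=0$, the relation $S\tilde S = 1$ forces $\tilde S_0 = 1$ immediately (comparing $0$-th homogeneous components), and comparing the $(-1)$-st components gives $S_0\tilde S_{-1} + S_{-1}\tilde S_0 = 0$, i.e. $\tilde S_{-1} = 0$. This also anchors the induction.

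\textbf{Step 2 (items 2 and 4 — HCP and totally free of $B_j$, by induction).} Suppose $\tilde S_{-j}$ is a HCP from $Hcpc(q)$, totally free of $B_j$, for all $j < t$. From $S\tilde S = 1$ the $(-t)$-th homogeneous component gives
\begin{equation}
\label{E:inv_recursion}
\tilde S_{-t} = -\sum_{j=0}^{t-1} S_{-(t-j)}\,\tilde S_{-j},
\end{equation}
where each $S_{-(t-j)}$ is a HCP from $Hcpc(q)$, totally free of $B_j$, by Proposition \ref{P:Si}. By Lemma \ref{L:HCPC} (products of HCPCs are HCPCs) together with Corollary \ref{C:HCPC(k)} and Lemma \ref{L:total_B} (the totally-free-of-$B_j$ elements of $Hcpc(q)$ form a subring of $\hat D_1^{sym}\hat\otimes_K\tilde K$), the right-hand side of \eqref{E:inv_recursion} is a HCPC that is totally free of $B_j$; since it is homogeneous of order $-t$ it is in fact a single HCP, so $\tilde S_{-t}$ has properties 2 and 4. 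Actually, since each $S_{-(t-j)}$ and each $\tilde S_{-j}$ is homogeneous of negative order (for $t-j\ge 2$, resp. $j\ge 2$), one should also note $\Ord(\tilde S_{-t})=-t<0$ so Proposition \ref{P:lower estimate} applies below.

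\textbf{Step 3 (item 3 — the degree bounds).} The lower bound $\tfrac{t}{q}-1 < Sdeg_A(\tilde S_{-t})$ follows from Proposition \ref{P:lower estimate} applied to the HCP $\tilde S_{-t}$ (of order $-t$), once we know $\tilde S_{-t}\neq 0$ and is totally free of $B_j$ — which is Step 2. For the upper bound $Sdeg_A(\tilde S_{-t}) < t$, one uses \eqref{E:inv_recursion} and the subadditivity of $Sdeg_A$ under products (Lemma \ref{L:HCPC}, item 1) together with Lemma \ref{L:obvious}: for each summand $Sdeg_A(S_{-(t-j)}\tilde S_{-j}) \le Sdeg_A(S_{-(t-j)}) + Sdeg_A(\tilde S_{-j})$, and by Proposition \ref{P:Si}(2) and the inductive hypothesis this is $< (t-j) + j = t$ (with the boundary cases $j=0$, $t-j=t$ handled by $\tilde S_0=1$ having $Sdeg_A=0<t$, and $j=1$ or $t-j=1$ handled by the vanishing of $S_{-1},\tilde S_{-1}$); taking the maximum over $j$ gives $Sdeg_A(\tilde S_{-t}) < t$.

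\textbf{Main obstacle.} The computation is essentially bookkeeping once the ring-theoretic inputs (Lemmas \ref{L:HCPC}, \ref{L:total_B}, Corollary \ref{C:HCPC(k)}, Proposition \ref{P:lower estimate}) are in place, so there is no deep new difficulty. The one point requiring a little care is the edge behaviour of the degree estimates in Step 3: the crude bound $Sdeg_A(S_{-(t-j)})+Sdeg_A(\tilde S_{-j}) < (t-j)+j$ is only strictly less than $t$ if at least one of the two strict inequalities from the inductive data is genuinely available, so one must keep track of the degenerate indices ($j=0,1$ and $t-j=0,1$) separately, using $Sdeg_A(1)=0$ and $S_{-1}=\tilde S_{-1}=0$. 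This is routine but is the place where a sloppy argument would break.
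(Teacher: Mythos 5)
Your proof is correct and in substance matches the paper's: both arguments reduce the claims to the facts that the components of $S$ are HCPs from $Hcpc(q)$, totally free of $B_j$, with $Sdeg_A(S_{-t})<t$ (Proposition \ref{P:Si}), that such elements form a subring with subadditive $Sdeg_A$ (Lemmas \ref{L:total_B}, \ref{L:obvious}, \ref{L:HCPC}), and that the lower bound comes from Proposition \ref{P:lower estimate} applied to the nonzero HCP $\tilde S_{-t}$ of order $-t$. The only difference is mechanical: the paper writes $S=1-S_-$ and $S^{-1}=1+\sum_{i\ge 1}S_-^i$, so that $\tilde S_{-t}$ is an explicit finite sum of products $(S_-)_{-q_1}\cdots(S_-)_{-q_i}$ with $q_1+\cdots+q_i=t$, whereas you extract the recursion $\tilde S_{-t}=-\sum_{j=0}^{t-1}S_{-(t-j)}\tilde S_{-j}$ from $S\tilde S=1$ and induct on $t$; both routes give the same degree estimates, and your handling of the boundary indices $j=0,1$ and $t-j=1$ is exactly the care the estimate requires.
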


\begin{proof}
We can present $S$ as $S=1-S_-$, where $\Ord (S_-)<-1$. Then $S^{-1}=1+\sum_{i=1}^{\infty}S_-^i$, since this series is well defined in the ring $\hat{D}_1^{sym}$. Hence $\tilde{S}_0=1$, $\tilde{S}_1=0$.

Note that any homogeneous component $\tilde{S}_{-t}$, $t>0$ is a finite sum of products of homogeneous components $S_{q}$:
$$
\tilde{S}_{-t}=\sum_{i=0}^{[t/2]}(S_-^i)_{-t}=\sum_{i=1}^{[t/2]}\sum_{q_1+\ldots + q_i=t, q_i> 0}(S_-)_{-q_1}\ldots (S_-)_{-q_i},
$$
and $(S_-)_{j}=\pm S_j$,  thus $\tilde{S}_{-t}$ is a HCP for any $t\ge 0$. Since all $S_q$ are totally free of $B_j$ by proposition \ref{P:Si}, $\tilde{S}_{-t}$ is totally free of $B_j$ for any $t$ by lemma \ref{L:total_B}. By lemmas \ref{L:HCPC} and \ref{P:Si} we have $Sdeg_A((S_-)_{-q_1}\ldots (S_-)_{-q_t})< q_1+\ldots +q_i=t$, and the second inequality of item 3 follows from proposition \ref{P:lower estimate}. 

\end{proof}

\begin{rem}
\label{R:conditon_A_for condition_A}
Note that the statement and proof of proposition \ref{P:Si} remain valid also for regular operators $Q\in \hat{D}_1^{sym}\hat{\otimes}_K\tilde{K}$ whose homogeneous components are HCP from $Hcpc(q)$ totally free of $B_j$ and with $Sdeg_A(Q_{q-i})\le i-2$ for any $i> 1$, cf. theorem \ref{T: P in hat(D) is a dif_op} below.

Note also that  operators $S$, $\tilde{S}$ from these statements are defined over the same field $K$ as the operator $Q$, though their homogeneous components written as HCPs need a formal extension of scalars to be presented in the G-form or standard form. 
\end{rem}

\subsection{Some necessary conditions on normal forms}
\label{S:normal forms}

Let $Q\in D_1$ be a normalized operator as in the previous section. 

\begin{Def}
\label{D:normal_form}
For a given pair of monic operators $Q,P\in D_1$ with $\Ord (Q)=\deg (Q)=q> 0$, $\Ord (P)=\deg (P)=p\ge 0$ we define a {\it normal form} of $P$ with respect to $Q$ as the operator $P':=S^{-1}PS$, where $S$ is a Schur operator for $Q$, i.e. $S^{-1}QS=\partial^q$,  $\Ord (S)=0$, $S_0=1$, $S_{-1}=0$. 
\end{Def} 

\begin{rem}
\label{R:normal_forms}
The Schur operator is not uniquely defined, but up to multiplication by the  invertible elements of order zero from the centralizer  $C(\partial^q)\subset \hat{D}_1^{sym}$ from proposition \ref{T:A.Z 7.1}. By this reason the normal form of the operator $P$  is  not uniquely defined, but up to conjugation by such elements from this centralizer. 

In the same way we can define normal forms for any regular operators $Q,P \in \hat{D}_1^{sym}$. However, by technical reasons we restrict ourself to the case of differential operators (because Schur operators of differential operators satisfy specific properties). 
\end{rem}

In this section we establish several necessary conditions on homogeneous components of a normal form $P'$. 

First note that all homogeneous components of $P$ are HCP, and the homogeneous decomposition of $P$ written in the G-form for all homogeneous components looks like 
$$
P=P_p+P_{p-1}+\ldots =\partial^p+\theta_{0;p-1}\partial^{p-1}+\ldots+(\theta_{0;p-i}+\ldots+\theta_{i-1;p-i}(\Gamma)_{i-1})\partial^{p-i}+\ldots ,
$$
i.e. $Sdeg_A(P_{p-i})< i$ and $Sdeg_B (P_{p-i})=-\infty$ for all $i>0$. Besides, all homogeneous components of $P$ are totally free of $B_j$.  

\begin{Def}
\label{D:conditionA}
We'll say that an operator $P\in  \hat{D}_1^{sym}\hat{\otimes}_K\tilde{K}$ satisfies {\it condition $A_q(k)$}, $q,k\in \dz_+$, $q>1$ if 
\begin{enumerate}
\item
$P_{t}$ is a HCP  from $Hcpc (q)$  for all $t$;
\item
$P_{t}$ is totally free of $B_j$ for all $t$;
\item
$Sdeg_A(P_{\Ord (P)-i})< i+k$ for all $i>0$;
\item
$\sigma (P)$ does not contain $A_{q;i}$, $Sdeg_A(\sigma (P))=k$.
\end{enumerate}
\end{Def}

\begin{ex}
\label{Ex:Schur_op}
From previous section we know that $S, S^{-1}$ satisfy condition $A_q(0)$, where $S$ is any monic Schur operator for normalised $Q\in D_1$. Moreover, if $S$ is a monic operator from $\hat{D}_1^{sym}$ of order zero satisfying condition $A_q(0)$ (but not necessary with $S_{-1}=0$), then $S^{-1}$ also satisfies the same properties. The proof is just the same as in corollary \ref{C:Si^(-1) no B}. 
\end{ex}

With the help of this definition we can prove the following criterion.  

\begin{theorem}
\label{T: P in hat(D) is a dif_op}
	The following statements are equivalent:
	\begin{enumerate}
		\item $P\in \hat{D}_1^{sym}\hat{\otimes}_K\tilde{K}$ is a {\it differential operator} (i.e. $P\in D_1\otimes_K\tilde{K}$)  with constant highest symbol.
		\item $\forall p>1$, $P\in \hat{D}_1^{sym}\hat{\otimes}_K\tilde{K}$ satisfies condition $A_p(0)$ with an extra property: all homogeneous components $P_j$ don't contain  $A_i$.
		\item $\exists p>1$, $P\in \hat{D}_1^{sym}\hat{\otimes}_K\tilde{K}$ satisfies condition $A_p(0)$ with an extra property: all homogeneous components $P_j$ don't contain  $A_i$.
	\end{enumerate}
\end{theorem}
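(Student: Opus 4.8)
The plan is to prove the cycle $(1)\Rightarrow(2)\Rightarrow(3)\Rightarrow(1)$; the implication $(2)\Rightarrow(3)$ is trivial, so the substance is $(1)\Rightarrow(2)$ and $(3)\Rightarrow(1)$.

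For $(1)\Rightarrow(2)$ I would start from a differential operator $P\in D_1\otimes_K\tilde K$ with constant highest symbol, say $\sigma(P)=c\,\partial^p$ after possibly rescaling. Since $P$ has an invertible (indeed constant) highest coefficient, Remark \ref{R:ord=deg} gives $\Ord(P)=\deg(P)=p$. Writing $P$ in canonical form $P=\sum_{k-i\le p}\alpha_{k,i}x^i\partial^k$ and collecting homogeneous components, each $P_{p-i}=\sum_{k-(k-p+i)=p-i}\alpha_{k,k-p+i}x^{k-p+i}\partial^k$ is a differential operator (no negative powers of $\partial$, since $k-p+i\ge 0$ forces $k\ge p-i$, but more importantly all exponents of $\partial$ are $\le p$ and $\ge 0$); rewriting $x^l\partial^l=\Gamma_l$ plus lower $\Gamma$-terms via Lemma \ref{L:1}(7), we see $P_{p-i}$ lies in $Hcpc(q)$ for every $q$, is a polynomial in $\Gamma_l$ and $\partial$ with no $A_i$ and no $B_j$ terms — this is exactly Example \ref{Ex:HCP-dif-op} combined with Example \ref{Ex:dif_op_total_B}. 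The degree bound $Sdeg_A(P_{p-i})\le$ (number of nontrivial $\Gamma$-powers available) $<i$ follows because in $P_{p-i}$ the highest $x$-power attached to $\partial^k$ is $k-p+i\le i$ when $k\le p$, and $k=p$ forces $x$-degree $i$ which would violate strictness — so one must check that for $i>0$ the component $\partial^p$-part is absent, which it is since $\sigma(P)=c\partial^p$ is the only weight-$p$ term; hence $Sdeg_A(P_{p-i})<i=i+0$, giving condition 3 with $k=0$. Condition 4 is immediate: $\sigma(P)=c\partial^p$ contains no $A_{p;i}$ and has $Sdeg_A=0$. Since this works for all $p>1$, we get $(2)$.

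For $(3)\Rightarrow(1)$, suppose $P$ satisfies $A_p(0)$ for some $p>1$ with all $P_j$ free of $A_i$. The key observation is that an HCP from $Hcpc(p)$ that is free of both $A_i$ and $B_j$ is, by Lemma \ref{L:correct_HPC} and its G-form, a $\tilde K$-linear combination of monomials $\Gamma_l D^r = \Gamma_l\partial^r$ (if $r\ge 0$) or $\Gamma_l\int^{-r}$ (if $r<0$) with $l\le Sdeg_A$. I would then argue that no $\int$ can occur. By condition 4, $\Ord(P)=p\ge 0$ and $\sigma(P)$ involves only $\Gamma_l\partial^p$ with $l=0$ (since $Sdeg_A(\sigma(P))=0$), i.e. $\sigma(P)=c\partial^p$. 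By condition 3, $P_{p-i}$ has $Sdeg_A(P_{p-i})<i$, and being free of $A_i,B_j$ it is a combination of $\Gamma_l D^r$ with $r$ determined by homogeneity: a monomial $\Gamma_l\partial^r$ has order $r$ while $\Gamma_l\int^{-r}$ has order $r$ as well, but crucially a monomial of homogeneous order $p-i$ built from $\Gamma_l$ (order $0$) and $D^r$ must have $r=p-i$; if $p-i\ge 0$ this is $\Gamma_l\partial^{p-i}$, a genuine differential monomial with $x$-degree $\le l<i$, hence $x$-power $\le p$; if $p-i<0$ it would be $\Gamma_l\int^{i-p}$. To exclude the latter I would use totally-freeness of $B_j$ (condition 2): by Proposition \ref{P:lower estimate}, a nonzero HCP of order $-u<0$ free of $A_i$ that is totally free of $B_j$ must satisfy the linear system \eqref{E:system_on_B_j}, but with no $A_i$ present ($k$ effectively $1$ in the relevant Vandermonde count) and $Sdeg_A<i=u+$(shift), Lemma \ref{L:Tkdu full rank when u=(d+1)k} forces the coefficient matrix to be injective, hence $P_{p-i}=0$ for $i>p$; moreover for $0<i\le p$ the inequality $\frac{u}{p}-1<Sdeg_A$ from Proposition \ref{P:lower estimate} combined with $Sdeg_A<i$ also pins things down. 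Thus $P=\sum_{i=0}^{p}P_{p-i}$ with each $P_{p-i}=\sum_{l<i}c_{l,i}\Gamma_l\partial^{p-i}$ a differential operator, so $P\in D_1\otimes_K\tilde K$, and $\sigma(P)=c\partial^p$ is constant.

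The main obstacle I anticipate is the careful bookkeeping in $(3)\Rightarrow(1)$ showing that the integration operators $\int$ genuinely cannot appear: a priori condition $A_p(0)$ allows homogeneous components of negative order, and one must rule these out using the interplay of conditions 2 and 3 together with the rank computations of Lemma \ref{L:Tkdu full rank when u=(d+1)k} and the lower bound in Proposition \ref{P:lower estimate}. Once that is settled, the fact that a $B_j$-free, $A_i$-free HCP of nonnegative order is literally a differential operator is essentially Example \ref{Ex:HCP-dif-op} read backwards, and the equivalence follows. I would organize the write-up so that the negative-order vanishing is isolated as a short sublemma before assembling the final argument.
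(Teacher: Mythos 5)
Your implication $(1)\Rightarrow(2)$ is essentially the paper's (and is fine, modulo the slightly garbled justification that $x^i\partial^p$-terms are absent: what rules them out is precisely the hypothesis that the leading coefficient is a constant, not that "$\sigma(P)$ is the only weight-$p$ term"). The real problem is in $(3)\Rightarrow(1)$: you claim that the negative-order homogeneous components must vanish ("hence $P_{p-i}=0$ for $i>p$") and accordingly write $P=\sum_{i=0}^{p}P_{p-i}$ as a finite sum. This is false, and it is even inconsistent with direction $(1)\Rightarrow(2)$ that you just proved: the differential operator $P=\partial^2+x^3$ has constant highest symbol, satisfies $A_p(0)$ with no $A_i$, and has $P_{-3}=x^3\neq 0$. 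More basically, $\Gamma_1\int=x$ is a nonzero HCP of order $-1$, free of $A_i$ and totally free of $B_j$, with $Sdeg_A=1$. The mechanism you invoke cannot force vanishing: in the $A_i$-free case, Proposition \ref{P:lower estimate} (with the Vandermonde count of Lemma \ref{L:Tkdu full rank when u=(d+1)k}) only yields the lower bound $Sdeg_A(P_{p-i})\ge i-p$ for a nonzero component of order $p-i<0$, and this is perfectly compatible with the upper bound $Sdeg_A(P_{p-i})<i$ of condition $A_p(0)$ as soon as $p>0$. So there is no contradiction, nothing vanishes, and your route to $P\in D_1$ collapses at exactly the point you flagged as the "main obstacle".

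What is actually needed — and what the paper does — is to show that each negative-order component, being $A_i$-free and totally free of $B_j$, is itself a differential operator, namely one divisible by $x$. Writing $P_{-r}=\sum_{m}p_{m,0;-r}\Gamma_m D^{-r}$ in G-form, multiplying by $D^{r}$ produces a $B_1$-term with coefficient proportional to $p_{0,0;-r}$; total freeness of $B_j$ together with the uniqueness of the HCP form (Lemma \ref{L:correct_HPC}) forces $p_{0,0;-r}=0$, whence $P_{-r}=xH$ with $H$ of order $-r+1$, again $A_i$-free and totally free of $B_j$ by Lemma \ref{L:H tfree of Bj iff xH tfree of Bj}; induction on $r$ then gives $P_{-r}\in D_1$. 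Finally one assembles the (in general infinite) sum $P=\sum_{i\ge 0}P_{\Ord(P)-i}$, which lies in $D_1=K[[x]][\partial]$ because condition 3 bounds $\deg(P_{\Ord(P)-i})$ by $\Ord(P)$, and the constancy of the highest coefficient comes from condition 4. Your write-up is missing this entire inductive "divide by $x$" step, and replaces it with a vanishing statement that is simply not true.
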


\begin{proof}
To simplify notations, we'll assume in the course of proof that $D_1$ is defined over $\tilde{K}$ (i.e. we assume $K=\tilde{K}$). 

	1$\Rightarrow$2: It is easy to see that any differential operator $P\in D_1$ with constant highest symbol, i.e. $P=a_q\partial^q+\sum_{i=1}^q a_{q-i}\partial^{q-i}$ with $a_q\in K$, satisfies condition $A_p(0)$ for any $p>0$ with an extra property: all homogeneous components $P_i$ don't contain  $A_i$, cf. lemma \ref{L:correct_HPC}.

	2$\Rightarrow$3 is obvious.
	
	3$\Rightarrow$1: We need to show $P\in D_1$. First let's show $\forall r\in\dz$  $P_r\in D_1$.  This is obvious  when  $r\ge 0$, because $P_r$ don't contain neither $B_j$ nor $A_i$. 
	
	In the case when $r<0$ the proof is by induction on $r$. Consider first $P_{-1}$; let's write it in the G-form. If $P_{-1}\neq 0$, according to the assumptions, suppose 
	$$
	P_{-1}=\sum_{0\leq m\leq d_{-1}}p_{m,0;-1}\Gamma_{m}D^{-1}.
	$$
	Since it's totally free of $B_j$, there won't be $B_j$ in $P_{-1} D$. Since
	$$
	P_{-1} D=-p_{0,0;-1}B_1+\sum_{0\leq m\leq d_{-1}}p_{m,0;-1}\Gamma_{m}
	$$
	according to Lemma \ref{L:HCPC} (see the calculations in the proof of item 3), hence by the uniqueness of HCPC (c.f. Lemma \ref{L:correct_HPC}) we know  $p_{0,0;-1}=0$. Thus 
	\begin{multline*}
	P_{-1}=\sum_{1\leq m\leq d_{-1}}p_{m,0;-1}\Gamma_{m}D^{-1}
	=\sum_{1\leq m\leq d_{-1}}p_{m,0;-1}(x\partial)^mD^{-1}\in D_1=\sum_{1\leq m\leq d_{-1}}p_{m,0;-1}(x\partial)^{m-1}x\in D_1
    \end{multline*}
	For $r>1$, consider $P_{-r}=\sum_{0\leq m\leq d_{-r}}p_{m,0;-r}\Gamma_mD^{-r}$. By the same reason $P_{-r}D^r$ doesn't contain $B_1$, so we get $p_{0,0,-r}=0$. Hence 
	\begin{multline*}
	P_{-r}=\sum_{1\leq m\leq d_{-r}}p_{m,0;-r}\Gamma_{m}D^{-r}=\sum_{1\leq m\leq d_{-r}}p_{m,0;-r} \Gamma_{m-1}xD^{-r+1}
	\\=\sum_{1\leq m\leq d_{-r}}p_{m,0;-r} x(\Gamma_1+1)^{m-1}D^{-r+1}=x\sum_{0\leq n\leq d_{-r}-1}\tilde{p}_{n,0;-r} \Gamma_n D^{-r+1}
    \end{multline*}
	where 
	$$
	\tilde{p}_{n,0;-r}=\sum_{m=1}^{d_{-r}}\binom{m-1}{n}p_{m,0;-r}
	$$
	
	Denote $H=\sum_{0\leq n\leq d_{-r}-1}\tilde{p}_{n,0;-r}\Gamma_{n}D^{-r+1}$, then $P_{-r}=xH$, it is written in G-form and totally free of $B_j$. Then by Lemma \ref{L:H tfree of Bj iff xH tfree of Bj} (see below) $H$ is also totally free of $B_j$. By induction, $H\in D_1\Rightarrow P_{-r}=xH\in D_1$. 
	
	Since $P$ satisfies $A_p(0)$, we have for all $i>0$ $Sdeg_A(P_{\Ord(P)-i})<i\Rightarrow deg(P_{\Ord(P)-i})<\Ord(P)$. 	Also we have $ P_{\Ord(P)}=\sigma(P)\in D_1$ with $deg(\sigma(P))=\Ord(\sigma(P))$. Thus 
$$P=\sum_{i=0}^{\infty} P_{\Ord(P)-i}\in D_1,$$ 
	$deg(P)=\Ord(P)$ and $P$ has a highest constant symbol.
\end{proof}

\begin{lemma}
	\label{L:H tfree of Bj iff xH tfree of Bj}
	Suppose $H$ is a HCP from $Hcpc(p)$ and doesn't contain $A_i$, with $Sdeg_A(H)<\infty$. Then $H$ is totally free of $B_j$ if and only if $xH$ is totally free of $B_j$.
\end{lemma}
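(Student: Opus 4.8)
The plan is to reduce the equivalence, in both directions, to the behaviour of the $B_j$-coefficients of the (unique, by Lemma \ref{L:correct_HPC}) $G$-form under left multiplication by $x$, by $\partial$, and by $\Gamma_1$, together with the fact that the elements totally free of $B_j$ form a subring (Lemma \ref{L:total_B}). The ``only if'' direction is quick: $x=\Gamma_1 D^{-1}$ is a HCP from $Hcpc(p)$ which is itself totally free of $B_j$, because for every $s\in\dz$ one has $xD^s=\Gamma_1 D^{s-1}$ (for $s\ge1$ the $B_1$-term arising from $\int\partial^s=\partial^{s-1}-B_1\partial^{s-1}$ is killed by $\Gamma_1 B_1=(1-1)B_1=0$), and $\Gamma_1 D^{s-1}$ carries no $B_j$ in its $G$-form; hence if $H$ is totally free of $B_j$, so is the product $xH$ by Lemma \ref{L:total_B}.

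For the ``if'' direction, assume $xH$ is totally free of $B_j$. Since $\partial\in D_1$ is totally free of $B_j$ and these elements form a subring, $\partial\cdot(xH)$ is totally free of $B_j$; but $\partial(xH)=(\partial x)H=(x\partial+1)H=(\Gamma_1+1)H$. So it suffices to prove the following local statement: \emph{if $G$ is a HCP from $Hcpc(p)$ with no $A_i$ and $(\Gamma_1+1)G$ contains no $B_j$, then $G$ contains no $B_j$.} Granting this, I would apply it to $G:=HD^s$ for each $s\in\dz$ — by Lemma \ref{L:HCPC} this is again a HCP from $Hcpc(p)$ with no $A_i$ and finite $Sdeg_A$, and $((\Gamma_1+1)H)D^s=(\Gamma_1+1)(HD^s)$ contains no $B_j$ because $(\Gamma_1+1)H$ is totally free of $B_j$ — and conclude that $HD^s$ contains no $B_j$ for every $s$, i.e.\ $H$ is totally free of $B_j$.

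The local statement is the only computation, and it is short. Writing $G$ in $G$-form as $G=\bigl(\sum_{m=0}^{d}h_m\Gamma_m+\sum_j g_j B_j\bigr)D^r$ and using $\Gamma_1\Gamma_m=\Gamma_{m+1}$ together with $\Gamma_1 B_j=(j-1)B_j$ (Lemma \ref{L:1}, item 8), left multiplication by $\Gamma_1+1$ yields
$$
(\Gamma_1+1)G=\Bigl(\sum_{m}(h_{m-1}+h_m)\Gamma_m+\sum_j jg_j B_j\Bigr)D^r\qquad(h_{-1}:=0),
$$
which is again a legitimate $G$-form: the $\Gamma$-part is a polynomial in $\Gamma$, and $jg_j$ vanishes whenever $g_j$ does, so condition~3 of Definition \ref{D:HCP} is preserved. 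By the uniqueness of the $G$-form (Lemma \ref{L:correct_HPC}), $(\Gamma_1+1)G$ contains no $B_j$ exactly when $jg_j=0$ for all $j\ge1$, i.e.\ when $g_j=0$ for all $j$, i.e.\ when $G$ contains no $B_j$.

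The step I expect to be the real subtlety — rather than a genuine obstacle — is the need to multiply by $\Gamma_1+1=\partial x$ and not by $\Gamma_1$ alone: since $\Gamma_1 B_1=0$, left multiplication by $\Gamma_1$ would annihilate a possible $B_1$-component of $G$ and thereby lose information, whereas $(\Gamma_1+1)B_j=jB_j\neq0$ for every $j\ge1$. The remaining work is pure bookkeeping: checking that $x$, $xH$, and the operators $HD^s$ are HCPs from $Hcpc(p)$ with no $A_i$ and finite $Sdeg_A$, so that Lemmas \ref{L:total_B}, \ref{L:HCPC} and \ref{L:correct_HPC} apply verbatim.
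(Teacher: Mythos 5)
Your proof is correct. The ``only if'' half is essentially the paper's computation repackaged: the paper writes $x=\Gamma_1\int$, bounds $Sdeg_B(\int HD^k)\le 1$ via Lemma \ref{L:HCPC} and kills the resulting $B_1$ with $\Gamma_1B_1=0$; you encode the same cancellation as the statement that $x=\Gamma_1D^{-1}$ is itself totally free of $B_j$ and then invoke the subring property of Lemma \ref{L:total_B}. The ``if'' half is where you genuinely diverge. The paper argues by contradiction, selecting the largest $j$ with $B_j$ occurring in $HD^k$ and tracking the shift $xB_jD^{r+k}=jB_{j+1}D^{r+k-1}$, with a small case analysis on $\int D^{r+k}$ to see that the top $B_{j+1}$-coefficient of $xHD^k$ survives. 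You instead left-multiply $xH$ by $\partial$ (legitimate, since $\partial$ is totally free of $B_j$ and such elements form a subring), so the operator acting on $H$ becomes $\partial x=\Gamma_1+1$, which acts on each $B_j$ by the nonzero scalar $j$ and on the $\Gamma$-part unitriangularly; uniqueness of the G-form (Lemma \ref{L:correct_HPC}) then shows directly that no $B_j$ can hide in $HD^s$, for every $s$, with no top-term selection or shift-tracking. Your remark that one must use $\Gamma_1+1$ rather than $\Gamma_1$ (because $\Gamma_1B_1=0$), and your check that the new coefficients $jg_j$ still respect condition 3 of Definition \ref{D:HCP} so that the uniqueness lemma applies, are exactly the points that make this shortcut sound. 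The paper's route stays with a single multiplication by $x$; yours buys a cleaner, cancellation-free mechanism (a diagonal action with nonzero eigenvalues) at the cost of one extra application of Lemma \ref{L:total_B}.
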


\begin{proof}
	Suppose $H$ is totally free of $B_j$, thus $H$ should be in the form of 
	$$
	H=\sum_{0\leq m\leq d} h_{m,0;r}\Gamma_m D^r
	$$
	we want to show $xH$ is totally free of $B_j$, i.e to show for any $k\in \mathbb{Z}$, $xHD^k$ doesn't contain $B_j$. Since $H$ is totally free of $B_j$, we know $Sdeg_B(HD^k)=-\infty$. According to Lemma \ref{L:HCPC} (notice $-\Ord(\int)=1$), we know 
	$$
	Sdeg_B(\int H D^k)\leq 1
	$$
	Hence if we write $\int H D^k$ into G-form, it should be like 
	$$
	\int HD^k=(\cdots)D^{r+k-1}+\lambda B_1D^{r+k-1}
	$$
	Where $\cdots$ means the polynomial of $\Gamma_1=(x\partial)$ and $\lambda\in \tilde{K}$. But notice that $\partial B_1=0$, so the terms with $B_1$ will be eliminate in $xHD^k$, i.e. 
	$$
	xH D^k=(x\partial)(\int HD^k)=(\cdots)D^{r+k-1}
	$$
	Hence $xH$ is totally free of $B_j$.
	
	On the other hand, if we know $xH$ is totally free of $B_j$, suppose $H$ is not totally free of $B_j$, hence there exist $k$, such that $HD^k$ contains $B_j$, suppose $Sdeg_B(HD^k)=j>0$, with
	$$
	HD^k=\sum_{m=0}^{d_r}\tilde{h}_{m,0;r}\Gamma_mD^{r+k}+\sum_{t=1}^{j-1}g_{t;r+k}B_tD^{r+k}+\lambda B_jD^{r+k}
	$$
	Since 
	$$
	xB_jD^{r+k}=\Gamma_1\int B_jD^{r+k}=\Gamma_1 B_{j+1}\int D^{r+k}=jB_{j+1}\int D^{r+k}
	$$
	Notice that $\int D^{r+k}=D^{r+k-1}$ when $r+k\leq 0$ and $(1-B_1)D^{r+k-1}$ when $r+k>0$, but $j>1$ and we know $B_{j+1}B_1=\delta_1^{j+1}B_1=0$, hence  $B_{j+1}\int D^{r+k}=B_{j+1}D^{r+k-1}$, thus
	$$
	xHD^k=\sum_{m=0}^{d_r+1}\tilde{\tilde{h}}_{m,0;r}\Gamma_mD^{r+k-1}+\sum_{t=1}^{j}\tilde{g}_{t;r+k}B_tD^{r+k-1}+j\lambda B_{j+1}D^{r+k-1}
	$$
	This is a contradiction with $xH$ totally free of $B_j$.
	\end{proof}

\begin{rem}
\label{R:criterion}
Note that the criterion \ref{T: P in hat(D) is a dif_op} holds also if $P$ is defined over $K$. In this case in items 2 and 3 we need to add that $P_j$ are defined over $K$. The proof remains the same.
\end{rem}

\begin{lemma}
\label{L:conditionA}
Suppose $P,Q\in \hat{D}_1^{sym}\hat{\otimes}_K\tilde{K}$ satisfy conditions $A_q(k_1)$, $A_q(k_2)$ correspondingly. Then 
$PQ$ satisfies condition $A_q(k_1+k_2)$.
\end{lemma}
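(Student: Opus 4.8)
The statement concerns condition $A_q(k)$, which bundles together four requirements (being an HCP from $Hcpc(q)$, being totally free of $B_j$, the linear bound $Sdeg_A(P_{\Ord(P)-i})<i+k$ on lower components, and the normalization $\sigma(P)=\partial$-free of $A_{q;i}$ with $Sdeg_A(\sigma(P))=k$). The natural approach is to verify each of the four conditions for $PQ$ separately, working homogeneous-component by homogeneous-component and reducing everything to the multiplication estimates already established. First I would record that $PQ\in Hcpc(q)$ with all homogeneous components HCPs: this is immediate from corollary \ref{C:HCPC(k)} together with the fact that $(PQ)_t=\sum_{a+b=t}P_aQ_b$ is a finite sum (since $\Ord(P),\Ord(Q)<\infty$ and components are bounded above) of products of HCPs, hence an HCPC, which is homogeneous of order $t$, hence an HCP. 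Similarly, condition 2 for $PQ$ follows from lemma \ref{L:total_B}: each $P_aQ_b$ is a product of totally-free-of-$B_j$ HCPs, hence totally free of $B_j$, and a finite sum of such is again totally free of $B_j$.

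For condition 4, I would use the multiplicativity of the symbol. Since $\sigma(P)=\partial^{\Ord(P)}$-type term with $Sdeg_A(\sigma(P))=k_1$ and contains no $A_{q;i}$, and likewise $\sigma(Q)$ has $Sdeg_A=k_2$ with no $A_{q;i}$, the product $\sigma(P)\cdot\sigma(Q)$ is nonzero (both are, up to $\Gamma$-polynomial factors, positive powers of $\partial$), so by remark \ref{R:ord_properties} we get $\sigma(PQ)=\sigma(P)\sigma(Q)$ and $\Ord(PQ)=\Ord(P)+\Ord(Q)$. Then $\sigma(PQ)$ contains no $A_{q;i}$ (a product of two HCPs with no $A_i$ has no $A_i$, by the formulae in lemma \ref{L:HCPC}, since $A_{i+j}=A_0$ only when $i=j=0$), and by formula \eqref{E:AA} in the case $H=\sigma(P)$, $M=\sigma(Q)$ (both monic in the relevant sense) one gets exactly $Sdeg_A(\sigma(PQ))=Sdeg_A(\sigma(P))+Sdeg_A(\sigma(Q))=k_1+k_2$ — here I need the \emph{equality} part of item 1 of lemma \ref{L:HCPC}, which holds for the top symbols because no cancellation among leading $\Gamma$-terms occurs when we multiply $\Gamma_{d_1}\partial^{\cdots}$ by $\Gamma_{d_2}\partial^{\cdots}$.

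The main work is condition 3: bounding $Sdeg_A((PQ)_{\Ord(PQ)-i})$ for $i>0$. Write $(PQ)_{\Ord(P)+\Ord(Q)-i}=\sum_{a+b=i,\,a,b\ge 0}P_{\Ord(P)-a}Q_{\Ord(Q)-b}$. By item 1 of lemma \ref{L:HCPC}, $Sdeg_A(P_{\Ord(P)-a}Q_{\Ord(Q)-b})\le Sdeg_A(P_{\Ord(P)-a})+Sdeg_A(Q_{\Ord(Q)-b})$. For the generic term with $a,b>0$ this is $<(a+k_1)+(b+k_2)=i+k_1+k_2$; for the term $a=i$, $b=0$ it is $\le (i-1+k_1)\,?$ — wait, $a>0$ gives $<i+k_1$ and $Sdeg_A(\sigma(Q))=k_2$, so the bound is $<i+k_1+k_2$; symmetrically for $a=0,b=i$. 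Taking the max over the finitely many summands (and using lemma \ref{L:obvious} for the sum) gives $Sdeg_A((PQ)_{\Ord(PQ)-i})<i+k_1+k_2$, which is condition 3 for $PQ$. The only subtlety I anticipate is being careful with the boundary cases where one factor is the top symbol (so its $Sdeg_A$ is exactly $k_j$ rather than strictly bounded by $0+k_j$), but since the other factor then contributes a strict inequality, the total is still strict; so the hard part is really just keeping the bookkeeping of the index shifts $a+b=i$ straight, not any genuinely new estimate.
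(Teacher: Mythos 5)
Your proposal is correct and follows essentially the same route as the paper: verify the four conditions separately, use $\sigma(PQ)=\sigma(P)\sigma(Q)$ together with formulae \eqref{E:AA} for condition 4, lemmas \ref{L:obvious} and \ref{L:HCPC} for the bound in condition 3 via the decomposition $(PQ)_{p+q-i}=\sum_{i_1+i_2=i}P_{p-i_1}Q_{q-i_2}$, and lemma \ref{L:total_B} for condition 2. The only cosmetic difference is that the paper justifies $\sigma(P)\sigma(Q)\neq 0$ by noting both symbols lie in $D_1$ (a domain), whereas you argue via the nonvanishing leading $\Gamma_{k_1+k_2}$-coefficient from \eqref{E:AA}; both are fine.
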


\begin{proof}
Let $p=\Ord (P)$, $q=\Ord (Q)$. Since $\sigma (P)$, $\sigma (Q)$ does not contain $A_{q;i}$ and $B_j$, they are differential operators, i.e. belong to $D_1$, and therefore $0\neq \sigma (P)\sigma (Q)=\sigma (PQ)$, cf. remark \ref{R:ord_properties}, and thus $\sigma (PQ)$ does not contain $A_{q;i}$ and $B_j$ and $\Ord (PQ)=p+q$. Moreover, in this case $Sdeg_A(\sigma (PQ))= Sdeg_A(\sigma (P))+Sdeg_A(\sigma (Q))=k_1+k_2$, cf. formulae \eqref{E:AA}. For other homogeneous components of $PQ$ we have
$$
(PQ)_{p+q-i}=\sum_{i_1+i_2=i}P_{p-i_1}Q_{q-i_2},
$$
is a HCP from $Hcpc(q)$ and  
$$
Sdeg_A(PQ)_{p+q-i}\le \max\{Sdeg_A(P_{p-i_1}Q_{q-i_2})\}< i_1+k_1+i_2+k_2=i+k_1+k_2
$$
for all $i>0$ by lemmas \ref{L:obvious}, \ref{L:HCPC}. Besides, $(PQ)_{p+q-i}$ is totally free of $B_j$ by lemma \ref{L:total_B} for all $i>0$. 
\end{proof}

\begin{cor}
	\label{C:P'}
	Suppose $Q\in D_1$ is a normalized operator with  $\Ord (Q)=\deg (Q)=q> 0$. 
	Suppose $P\in \hat{D}_1^{sym}\hat{\otimes}_K\tilde{K}$  satisfies condition $A_q(0)$, $\Ord (P)=p$. 
	 Put $P'=S^{-1}PS$, where $S$ is a Schur operator for $Q$ from proposition \ref{P:Si}. 
	 
	 Then $P'$ satisfies condition $A_q(0)$.
\end{cor}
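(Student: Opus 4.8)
The statement to prove is Corollary \ref{C:P'}: if $Q\in D_1$ is normalized of order $q>0$, if $P\in \hat{D}_1^{sym}\hat{\otimes}_K\tilde{K}$ satisfies condition $A_q(0)$, and if $S$ is a Schur operator for $Q$ (so $S^{-1}QS=\partial^q$, with $S_0=1$, $S_{-1}=0$ as in Proposition \ref{P:Si}), then $P'=S^{-1}PS$ again satisfies condition $A_q(0)$. The natural route is to combine Proposition \ref{P:Si}/Corollary \ref{C:Si^(-1) no B} (which say that $S$ and $\tilde S:=S^{-1}$ satisfy condition $A_q(0)$, in the form spelled out in Example \ref{Ex:Schur_op}) with the multiplicativity lemma \ref{L:conditionA} (products of operators satisfying $A_q(k_1)$ and $A_q(k_2)$ satisfy $A_q(k_1+k_2)$). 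Since $S$ satisfies $A_q(0)$ and $S^{-1}$ satisfies $A_q(0)$, and $P$ satisfies $A_q(0)$, the product $S^{-1}PS$ satisfies $A_q(0+0+0)=A_q(0)$ by two applications of lemma \ref{L:conditionA}.

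First I would check that Proposition \ref{P:Si} (together with Corollary \ref{C:Si^(-1) no B}) genuinely gives that $S$ and $S^{-1}$ satisfy condition $A_q(0)$ in the precise sense of Definition \ref{D:conditionA}: homogeneous components are HCPs from $Hcpc(q)$ (item 1 of Prop. \ref{P:Si}), totally free of $B_j$ (item 3 and Cor. \ref{C:Si^(-1) no B} item 4), with $Sdeg_A(S_{-i})<i$ for $i>0$ and $Sdeg_A(\tilde S_{-i})<i$ for $i>0$ — which is exactly condition (3) with $k=0$ — and $\sigma(S)=\partial^0=1$ (resp. $\sigma(\tilde S)=1$), which does not contain $A_{q;i}$ and has $Sdeg_A=0=k$, giving condition (4). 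So both $S$ and $S^{-1}$ satisfy $A_q(0)$. Then a first application of lemma \ref{L:conditionA} to $P$ (satisfying $A_q(0)$) and $S$ (satisfying $A_q(0)$) shows $PS$ satisfies $A_q(0)$; a second application to $S^{-1}$ (satisfying $A_q(0)$) and $PS$ shows $P'=S^{-1}(PS)$ satisfies $A_q(0)$. That is the whole argument.

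The one point that needs a word of care — and the only place where anything could go wrong — is that lemma \ref{L:conditionA} is stated for operators defined over $K$ as well, but in Corollary \ref{C:P'} the operator $P$ lives in $\hat{D}_1^{sym}\hat{\otimes}_K\tilde{K}$, while $S$ is a priori defined over $K$ (by Proposition \ref{P:Si} and Remark \ref{R:conditon_A_for condition_A}, since $Q\in D_1$) but its homogeneous components, written as HCPs in G-form, already require the extension to $\tilde{K}$. Since lemma \ref{L:conditionA} is stated over $\hat{D}_1^{sym}\hat{\otimes}_K\tilde{K}$ and both $S$, $S^{-1}$ satisfy $A_q(0)$ there, there is no obstruction: one simply works throughout in $\hat{D}_1^{sym}\hat{\otimes}_K\tilde{K}$. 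I do not expect any serious obstacle; the corollary is a direct assembly of the preceding results, and the "main step" is merely verifying that condition $A_q(0)$ for $S$ and $S^{-1}$ is literally what Proposition \ref{P:Si}, Corollary \ref{C:Si^(-1) no B} and Example \ref{Ex:Schur_op} assert, after which lemma \ref{L:conditionA} finishes it in two lines.
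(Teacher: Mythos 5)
Your argument is correct and matches the paper's own proof: the paper likewise invokes Proposition \ref{P:Si} and Corollary \ref{C:Si^(-1) no B} to see that $S$ and $S^{-1}$ satisfy condition $A_q(0)$, then concludes by Lemma \ref{L:conditionA}. Your extra verification of the four items of Definition \ref{D:conditionA} and the remark on working in $\hat{D}_1^{sym}\hat{\otimes}_K\tilde{K}$ are consistent with the paper and add nothing that changes the route.
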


\begin{proof}
By proposition \ref{P:Si} and corollaries \ref{C:HCPC(k)}, \ref{C:Si^(-1) no B} the operators $S, S^{-1}$ satisfy condition $A_q(0)$. So, our claim immediately follows from lemma \ref{L:conditionA}. 

\end{proof}

\begin{rem}
Note that if $P$ is defined over $K$, then $P'$ will be defined over $K$ too. 
\end{rem}

\section{Normal forms for commuting operators}
\label{S:normal_forms_for_commuting}

Let $Q\in D_1$ be a normalized operator as in section \ref{S:necessary conditions on the Schur}. Let $P\in D_1$ be a monic operator of positive order $p$ and $[P,Q]=0$. Fix $k=q=\Ord(Q)$, suppose $\tilde{K}$ is an algebraic closure of $K$, $A_i:=A_{k;i}$ as above. 

The famous Burchnall-Chaundy lemma (\cite{BC1}) says  that any two commuting differential operators $P,Q\in D_1:=K[[x]][\partial ]$ are algebraically dependent. More precisely, if the orders $p,q$ of operators $P,Q$ are coprime\footnote{i.e. the rank of the ring $K[P,Q]$ is 1, see e.g. \cite{Zheglov_book} for relevant definitions, in particular \cite[Lemma 5.23]{Zheglov_book} for a proof of the Burchnall-Chaundy lemma in general form. The statement about the form of polynomial follows easily from the proof.}, then there exists an irreducible polynomial $f(X,Y)$ of weighted degree $v_{p,q}(f)=pq$ of {\it special form} (here the weighted degree is defined as in Dixmier's paper \cite{Dixmier}, cf. item 4 in the List of Notations below): $f(X,Y) = \alpha X^q\pm Y^p+\ldots $ (here $\ldots$ mean terms of lower weighted degree, $0\neq\alpha\in K$; in particular, for coprime $p$ and $q$ the polynomial $f$ is automatically irreducible), such that $f(P,Q)=0$. A similar result for commuting operators of rank $r$ was established in \cite{Wilson} (cf. \cite[Th. 2.11]{Previato2019}), in this case $q=\ord (Q)/r$, $p=\ord (P)/r$, and again $GCD (p,q)=1$.

In this section we give a convenient description of the centraliser $C(\partial^q)\subset \hat{D}_1^{sym}\hat{\otimes}\tilde{K}$ and of normal forms of the operator $P$ with respect to $Q$. With the help of this description we give a new parametrisation of torsion free sheaves of rank one with vanishing cohomology groups on a projective  curve (according to the well known classification theory of commuting ordinary differential operators such sheaves describe rank one subrings of commuting operators with a given spectral curve, cf. \cite[Th. 10.26]{Zheglov_book}).

Consider the subring $\tilde{K}[A_1,\ldots ,A_{k-1}] \subset \hat{D}_1^{sym}\hat{\otimes}\tilde{K}$. Clearly, $\tilde{K}[A_1,\ldots, A_{k-1}]\cong \tilde{K}[x]/(x^k-1)$. Note that we have an isomorphism of $\tilde{K}$-algebras 
$$
\Phi : \tilde{K}[A_1,\ldots ,A_{k-1}]\rightarrow \tilde{K}^{\oplus k} , \quad P\mapsto (1\circ P, \ldots , (\partial^{k-1}\circ P){\partial^{-k+1}})
$$
(here $\partial^l\circ P$ is the notation from definition \ref{D:regular}, i.e. if $P=\sum_{i=0}^{k-1}p_iA_i$, then $\partial^l\circ P=(\sum p_i\xi^{il}){\partial^{l}}$, and $\tilde{K}^{\oplus k}$ denotes the semisimple algebra - the direct sum of algebras $\tilde{K}$). Indeed, $\Phi$ is obviously linear, and, since  
$$
\Phi(h_{l}A_1^l\cdot m_{t}A_1^t)=(h_lm_t,h_lm_t\xi^{l+t},\ldots,h_lm_t\xi^{(l+t)(k-1)})=\Phi(h_{l}A_1^l)\cdot\Phi(m_{t}A_1^t),
$$
$\Phi$ is a $\tilde{K}$-algebra homomorphism. It's easy to see that it is surjective and injective. 

\begin{rem}
\label{R:Phi_over_K}
Note that, by definition of $\Phi$, if $P\in \tilde{K}[A_1]$ belong to $\hat{D}_1^{sym}$ (i.e. all coefficients of the series representing $P$ in  $\hat{D}_1^{sym}$ belong to $K$), then $\Phi (P)\in K^{\oplus k}$, i.e. $\Phi$ is compatible with the extension of scalars of $\hat{D}_1^{sym}$.
\end{rem}

Now consider the skew polynomial ring $\crr = \tilde{K}^{\oplus k} [D,\sigma ]$, where \\
$\sigma (a_0,\ldots ,a_{k-1})= (a_{k-1}, a_0, \ldots , a_{k-2})$ \footnote{We use a standard notations and constructions from the books \cite{Cohn} and \cite{NNR}. A short self contained exposition of all necessary constructions and facts see e.g. in \cite{Zheglov_book}, Ch. 2,3 and 13.1. 

Recall that this notation means that we have the following commutation relation between $D$ and $(a_0, \ldots ,a_{k-1})$: $(a_0, \ldots ,a_{k-1})D=D\sigma (a_0, \ldots ,a_{k-1})$. }. It is a right and left noetherian ring. The multiplicatively closed subset $S=\{ D^k, k\ge 0\}$ obviously satisfies the right Ore condition, consists of regular elements, and ${\bf ass} (S)=0$. So, the right quotient ring $\mathfrak{B}=\crr_S$ exists. Clearly, 
$$
\mathfrak{B}\simeq\tilde{K}^{\oplus k}[D,D^{-1}]=\{\sum_{l=M}^NP_lD^l | \quad P_l\in \tilde{K}^{\oplus k}\}\simeq \tilde{K}[A_1][D,D^{-1}],
$$
i.e. any element from $\mathfrak{B}$ can be written as a Laurent polynomial, and the commutativity relations of polynomials are given above: $D^{-1}a=\sigma (a) D^{-1}$, $a\in \tilde{K}^{\oplus k}$.

Let $C(\mathfrak{B})$ be the center of $\mathfrak{B}$. Obviously, $D^k,D^{-k}\in C(\mathfrak{B})$. Since an element $(h_0,\ldots,h_{k-1})\in \tilde{K}^{\oplus k}$ doesn't commute with any $D^l$, if not all of $h_i$ equal to each other,  we have 
$$
C(\mathfrak{B})\cong \tilde{K}[D^k,D^{-k}],
$$
where $\tilde{K}$ is diagonally embedded into $\tilde{K}^{\oplus k}$. So, $\mathfrak{B}$ is a finite dimensional algebra over its center.

\begin{lemma}
\label{L:B is matrix algebra over C(B)}
	There is an isomorphism of $\tilde{K}$-algebras
	$$
	\mathfrak{B}\cong M_k(C(\mathfrak{B})).
	$$
\end{lemma}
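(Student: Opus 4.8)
The plan is to exhibit inside $\mathfrak{B}$ a complete set of $k\times k$ matrix units and to identify the associated corner ring with the center; the isomorphism then follows from the standard structure theorem for rings containing matrix units. Throughout I use the description $\mathfrak{B}\simeq\tilde{K}^{\oplus k}[D,D^{-1};\sigma]$ recalled above, together with $C(\mathfrak{B})\cong\tilde{K}[D^k,D^{-k}]=:R$, where $\tilde{K}$ is diagonally embedded. As a sanity check, as an $R$-module $\mathfrak{B}$ is free of rank $k^2$ (a basis is $\{e_iD^j\mid 0\le i,j\le k-1\}$, where $e_0,\dots,e_{k-1}$ are the primitive idempotents of $\tilde{K}^{\oplus k}$), which matches $\dim_R M_k(R)=k^2$.

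First I would record the commutation of the $e_i$ with $D$. Since $\sigma(e_i)=e_{i+1}$ (indices mod $k$), the defining relation $aD=D\sigma(a)$ gives $e_iD=De_{i+1}$, hence $De_i=e_{i-1}D$ and, iterating, $e_iD^m=D^me_{i+m}$ for all $m\in\dz$; in particular $e_i=D^{-i}e_0D^{i}$ for $0\le i\le k-1$. Now set
$$
E_{ij}:=D^{-i}e_0D^{j}\in\mathfrak{B},\qquad 0\le i,j\le k-1 .
$$
Then $\sum_{i=0}^{k-1}E_{ii}=\sum_i e_i=1$, and for the product
$$
E_{ij}E_{lm}=D^{-i}\bigl(e_0D^{\,j-l}e_0\bigr)D^{m}=D^{-i}\bigl(D^{\,j-l}e_{j-l}e_0\bigr)D^{m},
$$
where $e_{j-l}:=e_{(j-l)\bmod k}$. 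Since $j,l\in\{0,\dots,k-1\}$, one has $(j-l)\equiv0\pmod k$ iff $j=l$, so $e_{j-l}e_0=\delta_{jl}e_0$ and therefore $E_{ij}E_{lm}=\delta_{jl}E_{im}$. Thus $\{E_{ij}\}$ is a complete family of matrix units in $\mathfrak{B}$.

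Next I would compute the corner ring $E_{00}\mathfrak{B}E_{00}=e_0\mathfrak{B}e_0$. Writing an arbitrary element of $\mathfrak{B}$ as $\sum_m a_mD^m$ with $a_m\in\tilde{K}^{\oplus k}$, and using $e_0a_m=(a_m)_0e_0$ together with $e_0D^me_0=D^me_me_0=\delta_{m\bmod k,0}\,D^me_0$, one gets $e_0\mathfrak{B}e_0=\{\sum_{k\mid m}c_mD^me_0\mid c_m\in\tilde{K}\}$; hence $r\mapsto re_0$ is a ring isomorphism $R=\tilde{K}[D^k,D^{-k}]\xrightarrow{\ \sim\ }e_0\mathfrak{B}e_0$ (injective because the $D^me_0$ lie in distinct homogeneous summands $\tilde{K}^{\oplus k}D^m$ of $\mathfrak{B}$, surjective by the previous description). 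Finally, by the standard fact that a ring $A$ carrying matrix units $\{E_{ij}\}_{0\le i,j\le k-1}$ with $\sum_iE_{ii}=1$ and $E_{ij}E_{lm}=\delta_{jl}E_{im}$ satisfies $A\cong M_k(E_{00}AE_{00})$ (via $a\mapsto(E_{0i}aE_{j0})_{ij}$; cf. the references on skew polynomial rings cited above), we conclude $\mathfrak{B}\cong M_k(e_0\mathfrak{B}e_0)\cong M_k(C(\mathfrak{B}))$.

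I do not expect a serious obstacle: the content is entirely formal. The only points that require care are the bookkeeping of indices modulo $k$ in the matrix-unit identity — it is essential that $i,j,l,m$ are taken in $\{0,\dots,k-1\}$, so that $j-l\in\{-(k-1),\dots,k-1\}$ vanishes mod $k$ precisely when $j=l$ — and checking that the corner $e_0\mathfrak{B}e_0$ is exactly $\tilde{K}[D^k,D^{-k}]$ rather than something larger, which is where the identification $C(\mathfrak{B})\cong\tilde{K}[D^k,D^{-k}]$ recalled before the lemma enters.
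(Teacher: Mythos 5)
Your proof is correct, and it reaches the conclusion by a route organized differently from the paper's. The paper writes down the isomorphism explicitly: it defines $\psi$ on generators (diagonal elements go to diagonal matrices, $D$ goes to the companion-type matrix $T$ with $1$'s above the diagonal and $D^k$ in the lower-left corner), checks it is a homomorphism, proves surjectivity by exhibiting the elements $H_{ij}$ --- which, up to indexing conventions and a factor of $D^{k}$, are exactly your $E_{ij}=D^{-i}e_0D^{j}$, since $D^{-i}e_0D^{j}=e_iD^{j-i}$ --- and then concludes bijectivity by comparing ranks of free modules over $C(\mathfrak{B})$ together with the observation that $\psi$ carries the center to the center. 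You instead verify the matrix-unit relations internally, identify the corner $e_0\mathfrak{B}e_0$ with $C(\mathfrak{B})$ via $r\mapsto re_0$, and invoke the standard structure theorem $A\cong M_k(E_{00}AE_{00})$; this dispenses with both the well-definedness check for a map defined on generators and the rank count, at the cost of quoting the abstract Peirce-decomposition fact. The trade-off is mostly practical: the explicit matrix form $\psi$ is what the paper actually uses later (e.g. in remark \ref{R:algebraic_dependence_of_normal_forms} and in example \ref{Ex:Wallenberg}), but your isomorphism $a\mapsto (E_{0i}aE_{j0})_{ij}$ recovers it on the nose, since $E_{0i}DE_{j0}=e_0D^{i+1-j}e_0$ equals the corner identity $e_0$ when $j=i+1$ and equals $D^{k}e_0$ when $(i,j)=(k-1,0)$, which is precisely the matrix $T$. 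Your two points of care (indices taken in $\{0,\dots,k-1\}$ so that $e_{j-l}e_0=\delta_{jl}e_0$, and the computation showing $e_0\mathfrak{B}e_0$ is no larger than $\tilde{K}[D^k,D^{-k}]e_0$) are indeed the only places where something could go wrong, and you handle both correctly.
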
 

\begin{proof}
	Consider $\psi:\mathfrak{B}\rightarrow M_k(C(\mathfrak{B}))$, with 
	$$
	\psi\begin{pmatrix}
		h_0 \\
		h_1 \\
		\cdots \\
		h_{k-1}
	\end{pmatrix}=\begin{pmatrix}
	h_0 &  &  &  \\
	& h_1 &  &  \\
	&  & \cdots &  \\
	&  &  & h_{k-1}
	\end{pmatrix}\quad \psi(D)=T:=\begin{pmatrix}
	& 1 &  & \cdots &  \\
	&  & 1 & \cdots &  \\
	\cdots & \cdots & \cdots & \cdots & \cdots \\
	&  &  & \cdots & 1 \\
	D^k &  &  & \cdots & 
	\end{pmatrix}
	$$
	with $\psi(D^l)=T^l$, and extend $\psi$ by linearity. Direct calculations show that $\psi$ is a homomorphism of $\tilde{K}$-algebras. Now consider 
	$$
	H_{ij}=\begin{cases}
		(0,\cdots,1,\cdots,0)D^{j-i}&i\leq j
		\\(0,\cdots,1,\cdots,0)D^{j+k-i}&i>j
	\end{cases}
	$$  
	where $1$ is located at the $i$th entry, so that $\psi(H_{ij})=E_{ij}D^k$, $i>j$ or $E_{ij}$, $i\le j$. This means $\psi$ is a surjective. Obviously, $\mathfrak{B}$ has  dimension $k^2$ over $C(\mathfrak{B})$ and $\dim_{C(\mathfrak{B})}(M_k(C(\mathfrak{B})))=k^2$, too. Besides, $\psi (C(\mathfrak{B}))=C(M_k(C(\mathfrak{B})))= C(\mathfrak{B})\cdot Id$. So  $\psi$ is an isomorphism of $\tilde{K}$-algebras.
\end{proof}

Now consider the ring of skew pseudo-differential operators 
$$
E_k:=\tilde{K}[\Gamma_1, A_1]((\tilde{D}^{-1}))=\{\sum_{l=M}^{\infty}P_l\tilde{D}^{-l} | \quad P_l\in \tilde{K}[\Gamma_1, A_1]\} \simeq \tilde{K}^{\oplus k}[\Gamma_1]((\tilde{D}^{-1}))
$$
with the commutation relation as above (here $\tilde{K}[\Gamma_1, A_1]$ is a commutative subring in $\hat{D}_1^{sym}$):\footnote{The ring $E_k$ is constructed in the same way as splittable  local skew fields, cf. \cite{Zheglov_izv}}
$$
\tilde{D}^{-1}a=\sigma (a)\tilde{D}^{-1}, \quad a\in \tilde{K}[\Gamma_1, A_1] \quad \mbox{where \quad }
\sigma (A_1)= \xi^{-1} A_1, \quad \sigma (\Gamma_1)=\Gamma_1+1.
$$
The ring $E_k$ is endowed with a natural discrete pseudo-valuation, which we will denote as $-\ord_{\tilde{D}}$ (i.e. $\ord_{\tilde{D}}(\sum_{l=M}^{\infty}P_l\tilde{D}^{-l})= M$).  We extend the usual terminology used in this paper also for operators from $E_q$ (such as the notion of the highest coefficient, monic operators, etc.)

Denote $\widehat{Hcpc}_B(k)$ as the $\tilde{K}$-subalgebra in $\hat{D}_1^{sym}\hat{\otimes}\tilde{K}$ consisting of operators whose homogeneous components are HCPs totally free of $B_j$ (cf. lemma \ref{L:total_B}). 

\begin{lemma}
\label{L:embedding_Phi}
The map 
$$
\hat{\Phi}: \widehat{Hcpc}_B(k) \longrightarrow E_k,
$$
defined on monomial HCPs from $\widehat{Hcpc}_B(k)$ as $\hat{\Phi} (a A_j\Gamma_iD^l):=a \Phi (A_j)\Gamma_i\tilde{D}^l$ and extended by linearity on the whole $\tilde{K}$-algebra $\widehat{Hcpc}_B(k)$, is an embedding of $\tilde{K}$-algebras.
\end{lemma}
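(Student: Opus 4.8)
The plan is to verify that $\hat\Phi$ is a well-defined $\tilde K$-algebra homomorphism and then that it is injective, treating the two issues separately. For well-definedness, I would first note that $\widehat{Hcpc}_B(k)$ consists precisely of operators all of whose homogeneous components are HCPs totally free of $B_j$, and that by lemmas \ref{L:correct_HPC} and \ref{L:total_B} every such operator has a unique expression as a (possibly infinite, order-bounded) sum of monomials $aA_j\Gamma_iD^l$ in the G-form, with no $B_j$ occurring. So the formula $\hat\Phi(aA_j\Gamma_iD^l)=a\Phi(A_j)\Gamma_i\tilde D^l$ does determine a unique $\tilde K$-linear map once I check that the image lands in $E_k$: since each homogeneous component of $P\in\widehat{Hcpc}_B(k)$ has $Sdeg_A$ finite and the orders are bounded above, the images of the monomials have $\tilde D$-orders bounded above, and the sum over the (for each fixed order, finitely many) monomials is a well-defined element of $E_k=\tilde K[\Gamma_1,A_1]((\tilde D^{-1}))$.

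Next I would check that $\hat\Phi$ is multiplicative. It suffices to check this on monomials, because both $\widehat{Hcpc}_B(k)$ and $E_k$ have continuous multiplication with respect to the relevant (pseudo-)valuations $-\Ord$ and $-\ord_{\tilde D}$, and $\hat\Phi$ is continuous for these by the order bound just noted. For a product of two monomial HCPs totally free of $B_j$, the relevant multiplication formulae are exactly cases 3 and 4 of lemma \ref{L:HCPC}; the point is that, since both factors are totally free of $B_j$, so is the product (lemma \ref{L:total_B}), hence in formula \eqref{E:AA} the extra $B_s$-terms that can appear when $u<0,v>0$ must cancel in the full sum, and we are left with the ``Otherwise'' branch: $aA_i\Gamma_m D^u\cdot bA_j\Gamma_nD^v = ab\,\xi^{uj}\sum_l\binom nl u^{n-l}A_{i+j}\Gamma_{l+m}D^{u+v}$. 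On the $E_k$ side the commutation relations $\tilde D^{-1}A_1=\xi^{-1}A_1\tilde D^{-1}$ and $\tilde D^{-1}\Gamma_1=(\Gamma_1+1)\tilde D^{-1}$ (equivalently $D^i\Gamma_j=\sum_l\binom jl i^{j-l}\Gamma_l D^i$, $D^iA_j=\xi^{ij}A_jD^i$, which are the same identities as in lemma \ref{L:1}, items 6--7 and proposition \ref{T:A.Z 7.1}) give the identical formula after applying the algebra homomorphism $\Phi$ to the $A$-part. Thus $\hat\Phi$ respects the product on monomials, hence everywhere. That $\hat\Phi(1)=1$ and that it is $\tilde K$-linear is immediate.

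Finally, injectivity. Suppose $\hat\Phi(P)=0$ with $P\ne0$; pass to the top nonzero homogeneous component, or rather to the leading term in $\tilde D$. After reordering each homogeneous component of $P$ into the form $\sum_i(\sum_l c_{l,i}\Gamma_l)A_i D^r$, apply $\hat\Phi$: the image is $\sum_i(\sum_l c_{l,i}\Gamma_l)\Phi(A_i)\tilde D^r$, and since $\Phi$ is an isomorphism $\tilde K[A_1,\dots,A_{k-1}]\xrightarrow{\sim}\tilde K^{\oplus k}$ and $\tilde D^r$ is a unit in $E_k$, vanishing of this image forces each polynomial $\sum_l c_{l,i}\Gamma_l$ to vanish in $\tilde K[\Gamma_1]$, hence all $c_{l,i}=0$; by lemma \ref{L:correct_HPC} that homogeneous component of $P$ is zero, a contradiction. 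So $\hat\Phi$ is injective. The main obstacle here is the multiplicativity step: one must be careful that the ``spurious'' $B_j$-terms from lemma \ref{L:HCPC} genuinely disappear on $\widehat{Hcpc}_B(k)$ (this is where total freeness of $B_j$ is used essentially, not just on each factor but on the product), and that the infinite sums are manipulated legitimately, i.e. that $\hat\Phi$ is continuous so that checking multiplicativity on monomials suffices.
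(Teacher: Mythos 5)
Your proposal is correct and takes essentially the same route as the paper, whose proof is just the one-line observation that, since all elements of $\widehat{Hcpc}_B(k)$ are totally free of $B_j$, the claim is ``almost obvious'' from the multiplication formulae \eqref{E:BB}--\eqref{E:AA}. Your write-up merely fills in the same details explicitly: the spurious $B_s$-terms in \eqref{E:AA} cancel because the product is again totally free of $B_j$ (lemma \ref{L:total_B}), the remaining terms match the commutation relations in $E_k$, and injectivity follows from the uniqueness of the G-form (lemma \ref{L:correct_HPC}) together with $\Phi$ being an isomorphism.
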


\begin{proof}
Since all operators in $\widehat{Hcpc}_B(k)$ are totally free of $B_j$, the proof is almost obvious in view of formulae \eqref{E:BB}-\eqref{E:AA}.
\end{proof}

\begin{rem}
\label{R:hatPhi_over_K}
Again as in remark \ref{R:Phi_over_K}, if $P\in \hat{D}_1^{sym}\cap \widehat{Hcpc}_B(k)$, then $\hat{\Phi} (P)$ will be an operator with coefficients from $K$. 
\end{rem}

\begin{rem}
\label{R:algebraic_dependence_of_normal_forms}
Note that the ring $\mathfrak{B}$ is naturally embedded into the ring $E_k$. In particular, all normal forms of a monic differential operator $P$ with respect to the commuting with $P$ differential operator $Q$, or more generally any operator from the centralizer $C(\partial^k)$, can be embedded in $E_k$ via $\hat{\Phi}$. Recall that any such normal form is defined up to conjugation with an operator from the centralizer $C(\partial^k)$, and $C(\partial^k)\subset \widehat{Hcpc}_B(k)$ by corollary \ref{C:centraliser_free_of_B_j}.  

Next, note that any operator from the centralizer $C(\partial^k)$ embedded into $\mathfrak{B}$ goes under the isomorphism $\psi : \mathfrak{B}\cong M_k(C(\mathfrak{B}))$ to a matrix with entries belonging to the subring $\tilde{K}[D^k]$. Indeed,  we know from lemma \ref{L:homog_comm} that all coefficients of homogeneous terms of negative order satisfy the following conditions: if $P\in \hat{\Phi} (C(\partial^k))\subset \mathfrak{B}$, and $p_l\in \tilde{K}^{\oplus k}$ denote its coefficients, then always $\tilde{p}_{l,j}=0$ for $j=0,\ldots , -l-1$ if $l<0$. Therefore all homogeneous terms of negative order will go to matrices with {\it constant coefficients} (see the proof of lemma \ref{L:B is matrix algebra over C(B)}), and terms of non-negative order go to matrices with entries belonging to the subring $\tilde{K}[D^k]$. Finally, we can observe that in fact we get an isomorphism $C(\partial^k)\simeq M_k(\tilde{K}[D^k])$.

Therefore, for any $P\in C(\partial^k)$ the characteristic polynomial $\det (\psi (\hat{\Phi}(P))-\lambda)\in \tilde{K}[\lambda , D^k]$ defines an algebraic relation between $P$ and $\partial^k$ or, in degenerate cases, it defines an algebraic dependence over $\tilde{K}$. Note that if $P'$ is a normal form  of a monic differential operator $P$ with respect to the commuting with $P$ differential operator $Q$, and the order of $P$ is coprime with the order of $Q$, then $P'$ is monic and $\det (\psi (\hat{\Phi}(P))-\lambda)\in \tilde{K}[\lambda , D^k]$ is a polynomial of the form similar to the Burchnall-Chaundy polynomial, i.e. $\det (\psi (P)-\lambda) =  \pm\lambda^q\pm D^{kp}+\ldots $, in particular it is irreducible and therefore coincides with the Burchnall-Chaundy polynomial for $P$ and $Q$ up to a multiplicative constant. 
\end{rem}

\begin{rem}
\label{R:vector_form}
Elements of the centralizer $C(\partial^k)$ embedded to $\mathfrak{B}$ we'll call as a {\it vector form} presentation, and elements embedded to $M_k(\tilde{K}[D^k])$ -- as a {\it matrix form} presentation. 

Translating the description of the centralizer $C(\partial^k)$ given in proposition \ref{T:A.Z 7.1}, item 2, into the vector form, we get that $\hat{\Phi}(C(\partial^k))$ consists of Laurent polynomials in 
$\tilde{D}$ with coefficients from $K^{\oplus k}$ and with additional conditions: the coefficient $s_i$ at $\tilde{D}^{-i}$, $i>0$, has a shape $s_{i,j}=0$ for $j=0, \ldots , i-1$. 
\end{rem}

Embedding normal forms to the ring $E_k$, we can transform them to a simpler form by conjugation:
\begin{lemma}
\label{L:conjugation in E_q}
Assume $\tilde{P}\in \tilde{K}^{\oplus k}((\tilde{D}^{-1}))\subset E_k$ is a monic operator with $\ord_{\tilde{D}}(\tilde{P})=p$. Let $p=dn$, $k=q=dm$, where $d=GCD (p,q)$. 

Then there exists a monic invertible operator $S\in \tilde{K}^{\oplus k}((\tilde{D}^{-1}))\subset E_k$ with $\ord_{\tilde{D}}(S)=0$ such that all coefficients of the operator $S^{-1}\tilde{P}S$ commute with $\tilde{D}^d$.
\end{lemma}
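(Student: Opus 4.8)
The plan is to mimic the standard Schur normalisation argument (cf. the proof of Proposition~\ref{P:normalized_Schur} and of \cite[Prop.~7.2]{A.Z}), but carried out inside the skew pseudo-differential ring $E_k$, where the relevant coefficient ring is the semisimple algebra $\tilde K^{\oplus k}$. First I would set $\bar D:=\tilde D^d$ and rewrite everything with respect to $\bar D$: since $\sigma^d$ acts on $\tilde K^{\oplus k}$ as the cyclic shift by $d$ positions, the ring $\tilde K^{\oplus k}((\tilde D^{-1}))$ becomes a skew Laurent series ring in $\bar D$ over $\tilde K^{\oplus k}$ with automorphism $\bar\sigma:=\sigma^d$. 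Because $d\mid p$, the highest symbol $\tilde D^p=\bar D^{p/d}$ is central for $\bar\sigma$, so the operator $\tilde P$ is ``monic of order $p/d$'' with respect to $\bar D$ and its leading coefficient $1\in\tilde K^{\oplus k}$ is a unit fixed by $\bar\sigma$. This is exactly the setup in which a conjugating operator exists.

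Next I would construct $S=1+s_1\tilde D^{-1}+s_2\tilde D^{-2}+\cdots$ with $s_i\in\tilde K^{\oplus k}$ by solving, order by order in $\tilde D^{-1}$, the equation $\tilde P S = S R$ where $R$ is required to have all coefficients commuting with $\bar D=\tilde D^d$, i.e.\ $R$ is a Laurent series in $\bar D$ with coefficients in the subalgebra $\tilde K\cdot(1,\dots,1)\subset\tilde K^{\oplus k}$ (the $\bar\sigma$-invariants) plus the allowed ``already central'' part. Writing $\tilde P=\sum_{l\le p}a_l\tilde D^{l}$ and matching the coefficient of $\tilde D^{p-i}$ for $i=1,2,\dots$, one gets at each step a linear equation of the shape
$$
a_p\,\sigma^{p}(s_i)-s_i\,a_p + (\text{known terms involving }s_1,\dots,s_{i-1},\ r_1,\dots)=0,
$$
which, after splitting $\tilde K^{\oplus k}$ into the $\bar\sigma$-orbits of coordinates, becomes a triangular (in fact diagonal, since $a_p=1$) system: the component of $s_i$ along each non-$\bar\sigma$-invariant direction is uniquely determined, while the component along the $\bar\sigma$-invariant direction is free and can be absorbed into $R$ (this is the usual ambiguity of the Schur operator by the centraliser). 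Invertibility of $S$ is automatic since it is monic of order $0$ in a ring with a discrete pseudo-valuation; monotonicity of the valuation guarantees the series defining $S^{-1}$ converges in $E_k$.

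The only real subtlety, and the step I expect to be the main obstacle, is bookkeeping the action of $\sigma$ versus $\bar\sigma=\sigma^d$: one must check that the recursion genuinely decouples into the $\bar\sigma$-invariant part (which stays free) and its complement (which is forced), and that at every step the ``known terms'' indeed lie in the appropriate component so that a solution $s_i\in\tilde K^{\oplus k}$ exists. Concretely, because $d=\mathrm{GCD}(p,q)$ and $k=q$, the cyclic group generated by $\bar\sigma$ on the $k$ coordinates has exactly $d$ orbits each of size $m=q/d$, and on each orbit the relevant scalar equation is solvable precisely because $a_p=1$. Once this is checked the construction goes through verbatim, and the resulting $R=S^{-1}\tilde P S$ has, by construction, all coefficients commuting with $\tilde D^d$, which is the assertion. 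I would also remark (as in Remark~\ref{R:conditon_A_for condition_A}) that if $\tilde P$ has coefficients in $K^{\oplus k}$ and $K$ already contains the needed roots of unity, then $S$ can be taken with coefficients in $K^{\oplus k}$ as well.
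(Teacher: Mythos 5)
Your proposal follows essentially the same route as the paper's proof: you build $S$ order by order (the paper takes an infinite product of elementary factors $1+s_l\tilde{D}^{-l}$ and passes to the limit in the $\ord_{\tilde{D}}$-topology; solving $\tilde{P}S=SR$ coefficientwise is equivalent), and at each order you solve $\sigma^{p}(s_l)-s_l=r_l-p_l$ using that $\langle\sigma^{p}\rangle=\langle\sigma^{d}\rangle$ splits the $q$ coordinates into $d$ orbits of size $m$, so the orbit-average of $p_l$ is absorbed into $R$ and the complementary (orbit-sum-zero) part is removable, with the invariant component of $s_l$ free — exactly the paper's explicit per-orbit linear systems with right-hand sides $b_i/m$. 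Two slips to correct in the write-up: the $\bar\sigma$-invariants are \emph{not} $\tilde{K}\cdot(1,\dots,1)$ (that is the $\sigma$-invariant subalgebra, and requiring $R$'s coefficients there would mean commuting with $\tilde{D}$ itself, which is too strong and generally unattainable); they are the $d$-periodic tuples $(b_0,\dots,b_{d-1},b_0,\dots)$, a copy of $\tilde{K}^{\oplus d}$, which is what your own orbit count ($d$ orbits of size $m$) and the paper's formula for $\tilde{p}_l$ give. Likewise $R$ need not be a Laurent series in $\bar{D}=\tilde{D}^{d}$ alone: its coefficients may sit at arbitrary powers of $\tilde{D}$, and only their $\sigma^{d}$-invariance is required.
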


\begin{proof}
We will find the operator $S$ as the limit of a Cauchy sequence\footnote{with respect to the topology defined by the pseudo-valuation $-\ord_{\tilde{D}}$}. Assume 
$$
\tilde{P}=\tilde{D}^{dn}+\sum_{i=1}^{\infty}p_i\tilde{D}^{dn-i} \quad p_i\in \tilde{K}^{\oplus k}, 
$$
and let $p_l=(p_{l,0}, \ldots ,p_{l,q-1})$ be the first coefficient not commuting with $\tilde{D}^d$. Consider the operator $S_l:=1+s_l \tilde{D}^{-l}$, where $s_l=(s_{l,0},\ldots , s_{l,q-1})$. Then an easy direct calculation shows that
$$
S_l^{-1}\tilde{P} S_l=\tilde{D}^{dn}+\ldots + (p_l-s_l+\sigma^{p}(s_l))\tilde{D}^{dn-l}+\mbox{terms of lower order},
$$
where $\ldots$ are the same terms of $\tilde{P}$. Consider the following $d$ systems of linear equations: for $i=0, \ldots , d-1$ [mod $d$] set $b_i:=\sum_{k=0}^{m-1}p_{l,[(i+pk) \mbox{mod $q$}] }$, then the $i$-th system is (below all indices are considered modulo $q$)
$$
\begin{cases}
p_{l,i}-s_{l,i}+s_{l,i+p}=b_i/m \\
p_{l,i+p}-s_{l,i+p}+s_{l,i+2p}=b_i/m \\
\ldots \\
p_{l,i+(m-1)p}-s_{l,i+(m-1)p}+s_{l,i}=b_i/m
\end{cases}
$$ 
This system is solvable and give explicit formulae for unknown variables $s_{l,j}$: for $r=1,\ldots ,(m-1)$ from the first $(m-1)$ equations we get 
$$s_{l,i+rp}=r b_i/m+s_{l,i}-(\sum_{j=0}^{r-1} p_{l,i+jp})$$
($s_{l,i}$ is a free parameter), and the last equation becomes an identity under substitution of these values. Taking such $s_l$ we get
$$
\tilde{p}_l:= p_l-s_l+\sigma^{p}(s_l)= (b_0/m, \ldots , b_{d-1}/m, b_0/m, \ldots , b_{d-1}/m, \ldots )
$$ 
which obviously commutes with $\tilde{D}^d$. 

It's easy to see that the system $\{\prod_{i=1}^j S_i\}$, $j\ge 1$ is a Cauchy system in $E_q$, and its limit $S$ is the needed operator. 
\end{proof}

\begin{cor}
\label{C:Schur_subring}
Let $B'\subset \tilde{K}^{\oplus k}((\tilde{D}^{-1}))\subset E_q$ be a commutative subring containing $\tilde{D}^q$ and a monic operator $P'$ of order $p$. Suppose $GCD(p,q)=1$. 

Then $S^{-1}B'S\subset \tilde{K}((\tilde{D}^{-1}))$ for an operator $S$ from lemma \ref{L:conjugation in E_q}.
\end{cor}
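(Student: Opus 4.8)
The plan is to combine Lemma \ref{L:conjugation in E_q} with an argument that commuting operators, after being put into a form whose coefficients commute with $\tilde{D}^d$, actually have scalar coefficients when $d=1$. First I would apply Lemma \ref{L:conjugation in E_q} to the monic operator $P'$ (which has order $p$ with $\gcd(p,q)=1$, so $d=1$, $n=p$, $m=q$): there exists a monic invertible $S\in\tilde{K}^{\oplus k}((\tilde{D}^{-1}))$ with $\ord_{\tilde{D}}(S)=0$ such that all coefficients of $S^{-1}P'S$ commute with $\tilde{D}$. Since $\sigma$ acts on $\tilde{K}^{\oplus k}$ by the cyclic shift $\sigma(a_0,\dots,a_{k-1})=(a_{k-1},a_0,\dots,a_{k-2})$ and an element of $\tilde{K}^{\oplus k}$ commutes with $\tilde{D}$ iff it is fixed by $\sigma$ iff all its components are equal, commuting with $\tilde{D}$ means lying in the diagonally embedded copy of $\tilde{K}$. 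Hence $S^{-1}P'S\in\tilde{K}((\tilde{D}^{-1}))$. Also $S^{-1}\tilde{D}^qS=\tilde{D}^q$ since $\tilde{D}^q$ is central (it equals $D^k$, which is in $C(\mathfrak{B})$ and hence central in $E_k$ by the computation preceding Lemma \ref{L:B is matrix algebra over C(B)}), so $S^{-1}\tilde{D}^qS=\tilde{D}^q\in\tilde{K}((\tilde{D}^{-1}))$ as well.

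The remaining point is that the whole subring $B'$, not just the two distinguished generators $P'$ and $\tilde{D}^q$, lands in $\tilde{K}((\tilde{D}^{-1}))$ after conjugation by $S$. For this I would take an arbitrary $R\in B'$ and consider $R':=S^{-1}RS\in\tilde{K}^{\oplus k}((\tilde{D}^{-1}))$. Since $B'$ is commutative and contains $P'$, we have $[R,P']=0$, hence $[R',S^{-1}P'S]=0$, i.e. $R'$ commutes with the scalar operator $\bar P:=S^{-1}P'S\in\tilde{K}((\tilde{D}^{-1}))$, which is monic of order $p$ (conjugation preserves $\ord_{\tilde{D}}$ and the highest coefficient, so $\bar P=\tilde{D}^p+\dots$ with scalar coefficients). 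Now I would argue that the centralizer of such a monic scalar pseudo-differential operator $\bar P$ inside $\tilde{K}^{\oplus k}((\tilde{D}^{-1}))$ is contained in $\tilde{K}((\tilde{D}^{-1}))$: writing $R'=\sum_{i\ge M}r_i\tilde{D}^{-i}$ with $r_i=(r_{i,0},\dots,r_{i,q-1})\in\tilde{K}^{\oplus k}$ and expanding $[\bar P,R']=0$ degree by degree, the top-order term of the commutator is $p(\sigma^{-p}(r_M)-r_M)\tilde{D}^{\,p-M-1}$ up to lower order — more precisely one gets, for each coefficient, a relation forcing $\sigma^p(r_i)=r_i$ modulo already-determined lower terms; since $\gcd(p,q)=1$, $\sigma^p$ generates the same cyclic group as $\sigma$, so $\sigma^p(a)=a$ again means $a$ is a scalar. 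An induction on the order $i$ then shows every $r_i$ is scalar, i.e. $R'\in\tilde{K}((\tilde{D}^{-1}))$.

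Thus $S^{-1}RS\in\tilde{K}((\tilde{D}^{-1}))$ for every $R\in B'$, which is exactly $S^{-1}B'S\subset\tilde{K}((\tilde{D}^{-1}))$, completing the proof. The main obstacle is the last inductive step: one must track carefully how the lower-order corrections enter the recursion for the coefficients $r_i$ and verify that the leading equation at each order is genuinely of the form $\sigma^p(r_i)-r_i=(\text{known scalar data})$, so that solvability together with $\gcd(p,q)=1$ forces $r_i$ to be scalar; this is a routine but slightly delicate commutator expansion in the skew ring $E_k$, using the commutation rule $\tilde{D}^{-1}a=\sigma(a)\tilde{D}^{-1}$. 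Everything else is a direct appeal to Lemma \ref{L:conjugation in E_q} and to the centrality of $\tilde{D}^q$.
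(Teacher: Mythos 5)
Your argument is correct and fills in exactly what the paper dismisses as ``obvious'': apply Lemma \ref{L:conjugation in E_q} with $d=1$ so that the coefficients of $S^{-1}P'S$ are $\sigma$-invariant, hence scalar, and then show by the leading-coefficient induction that the centralizer of this monic scalar operator inside $\tilde{K}^{\oplus q}((\tilde{D}^{-1}))$ is contained in $\tilde{K}((\tilde{D}^{-1}))$, which forces all of $S^{-1}B'S$ into $\tilde{K}((\tilde{D}^{-1}))$. Two cosmetic corrections: in the skew Laurent ring the top term of $[\tilde{D}^p,r_M\tilde{D}^{-M}]$ is $(\sigma^{-p}(r_M)-r_M)\tilde{D}^{p-M}$ (no factor $p$ and no drop in order, so your subsequent ``$\sigma^p(r_i)=r_i$ modulo known data'' formulation is the right one, and in fact the cross terms in the induction vanish identically because the $\bar p_j$ and the already-treated $r_i$ are scalar), and $\tilde{D}^q$ is not central in all of $E_q$ (it does not commute with $\Gamma_1$) but only in $\tilde{K}^{\oplus q}((\tilde{D}^{-1}))$, which suffices since $S$ lies there, so $S^{-1}\tilde{D}^qS=\tilde{D}^q$ still holds.
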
 

The proof is obvious. 

\begin{rem}
\label{R:S_over_K}
Clearly, if $\tilde{P}\in K^{\oplus k}((\tilde{D}^{-1}))$, then the operator $S$ from lemma \ref{L:conjugation in E_q} also belongs to $K^{\oplus k}((\tilde{D}^{-1}))$, cf. remark \ref{R:hatPhi_over_K}.
\end{rem}

Normal forms of commuting differential operators can be normalised (s.t. the normalised normal form will be uniquely defined in its conjugacy class). If the orders of these operators are coprime, such a normalisation can be easily presented:

\begin{lemma}
\label{L:normalisation}
Let $Q,P$ be differential operators as in the beginning of this section. Assume $GCD(p,q)=1$. 
For $i=1, \ldots , q-1$ define the sets $N_i:=\{n\in \dn |\quad \mbox{($np$ mod $q$)$\ge i$}\}$. 
  
Then there is a normalised normal form $P'$ of $P$ with respect to $Q$ uniquely defined in the conjugacy class  by elements from the centraliser $C(\partial^q)$. Namely, we can explicitly describe its image under the embedding $\hat{\Phi}$ as follows:
$$
\hat{\Phi}(P')=\tilde{D}^p +\sum_{l=-q+1}^{p-1} p_l \tilde{D}^l, \quad p_l=(p_{l,0},\ldots ,p_{l,q-1})\in \tilde{K}^{\oplus q},
$$
where $p_{l,j}=0$ for indices $j$ satisfying the following rules (all indices are taken mod $q$):
$$
\begin{cases}
\mbox{If $p\ge q$ then} 
\begin{cases}
p_{l,(j-1)p}=\mbox{$0$ for $j\in N_{p-l}$}\quad l>p-q \\
p_{l,j}=\mbox{$0$ for $j=0,\ldots ,-l-1$} \quad l<0
\end{cases}
\\
\mbox{If $p< q$ then}
\begin{cases}
p_{l,(j-1)p}=\mbox{$0$ for $j\in N_{p-l}$}\quad l\ge 0 \\
p_{l,j}=\mbox{$0$ for $j=0,\ldots ,-l-1$ or if $j=(k-1)p$ and $k\in N_{p-l}$}\quad p-q<l<0 \\
p_{l,j}=\mbox{$0$ for $j=0,\ldots ,-l-1$} \quad l\le p-q
\end{cases}
\end{cases}
$$
Let's call coefficients $p_{l,j}$ supplementary to the list above as {\it coordinates} of  $P'$. If $P'$ and $P''$ are two operators with different values of coordinates, then there are no invertible operators $S\in C(\partial^q)$ such that $P'=S^{-1}P''S$, and there are no invertible $1\neq S\in C(\partial^p)$ such that $P'=S^{-1}P'S$. 

\end{lemma}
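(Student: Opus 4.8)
The plan is to establish three things: (i) existence of a normal form with the prescribed vanishing pattern of coefficients; (ii) that this vanishing pattern is exactly complementary to a set of "free coordinates", so that the remaining coefficients can be arbitrary; (iii) that no nontrivial conjugation inside $C(\partial^q)$ (or by $C(\partial^p)$) preserves the coordinates, which gives uniqueness in the conjugacy class. Throughout I would work inside the ring $\mathfrak{B}\simeq \tilde{K}^{\oplus q}[D,D^{-1}]$ via the embedding $\hat{\Phi}$ of remark \ref{R:algebraic_dependence_of_normal_forms}, so that $\partial^q$ becomes $\tilde{D}^q$, which is central, and the centraliser $C(\partial^q)$ becomes the set of Laurent polynomials $\sum s_l\tilde{D}^l$ with coefficients in $K^{\oplus q}$ subject to $s_{i,j}=0$ for $j=0,\ldots,i-1$ when $i>0$ (vector form, cf. remark \ref{R:vector_form}). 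Conjugating a normal form $P'$ by an invertible $S\in C(\partial^q)$ does not change $\sigma(P')=\tilde{D}^p$ (since $p,q$ are coprime, $P'$ is monic by remark \ref{R:algebraic_dependence_of_normal_forms}), and affects the coefficient $p_l$ of $\tilde{D}^l$ by a term of the form $s_l - \sigma^{p}(s_l) + (\text{known lower-order expression in } p_{>l}, s_{>l})$, exactly as in the computation in lemma \ref{L:conjugation in E_q}. This is the engine for the whole argument.

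\textbf{Step 1 (existence / normalisation).} I would proceed by descending induction on $l$ from $l=p-1$ down to $l=-q+1$, at each stage choosing the components $s_{l', j}$ of the conjugating operator (for the appropriate $l'$) to kill as many components of $p_l$ as the linear map $s\mapsto s-\sigma^{p}(s)$ allows. Since $p$ and $q$ are coprime, $\sigma^{p}$ is a single $q$-cycle on the index set $\dz/q\dz$, so the operator $1-\sigma^{p}$ on $\tilde{K}^{\oplus q}$ has rank $q-1$: its image is the codimension-one subspace of vectors with zero coordinate-sum, and its kernel is the diagonal. Consequently, for a coefficient $p_l$ with no further vanishing constraints imposed by membership in $\mathfrak{B}$ (i.e. the "generic" $l$ in the range), conjugation can be used to force $q-1$ prescribed components to vanish, leaving one free component — this is the source of the Burchnall--Chaundy-type invariant. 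The precise choice of which $q-1$ components to zero out is dictated by $N_i$: one tracks, along the single $\sigma^{p}$-orbit, the "arrival times" $np \bmod q$, and the condition $j\in N_{p-l}$ picks out exactly those indices reachable from the free parameter within the allowed window; the telescoping formula $s_{l,i+rp}=r b_i/m+s_{l,i}-\sum_{j<r}p_{l,i+jp}$ from lemma \ref{L:conjugation in E_q} (here with $m=q$, $d=1$) specialises to give the explicit solution. When $l<0$ the extra constraints $p_{l,j}=0$ for $j=0,\ldots,-l-1$ come for free: any operator in $\hat{\Phi}(C(\partial^q))\cdot(\text{normal form})$ coming from the centraliser already satisfies them by lemma \ref{L:homog_comm}, and more to the point $P'$ itself does because a normal form of a differential operator satisfies condition $A_q(0)$ (corollary \ref{C:P'}), whose item 3 forces exactly this decay. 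The case split $p\ge q$ versus $p<q$ in the statement is just bookkeeping about whether, for a given $l$, the centraliser-induced constraint or the $N_{p-l}$-constraint (or both) is active; the inductive elimination respects both because the centraliser constraints are preserved under the conjugation at every stage (they are closed under the operations involved).

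\textbf{Step 2 (rigidity / uniqueness).} Suppose $P'=S^{-1}P''S$ with $S\in C(\partial^q)$ invertible, $\ord_{\tilde D}(S)=0$, and both $P',P''$ in the normalised shape above. Write $S=1+s_{l_0}\tilde{D}^{-l_0}+\ldots$ with $s_{l_0}$ the first nonzero coefficient beyond the constant term, $l_0>0$. Comparing coefficients of $\tilde{D}^{p-l_0}$ in $P'$ and $S^{-1}P''S$: the difference is $s_{l_0}-\sigma^{p}(s_{l_0})$ plus terms already matched at higher order, hence $s_{l_0}-\sigma^{p}(s_{l_0})$ must equal the difference of two vectors each of which has the prescribed components (the coordinates) free and all others zero. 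Now the key point is that the vanishing pattern was chosen precisely so that $\mathrm{im}(1-\sigma^{p})$ meets the coordinate-subspace only in $0$: the image is the sum-zero hyperplane, and — using that $s_{l_0}$ itself lies in the centraliser-constrained subspace $\{s_{l_0,j}=0,\ j=0,\ldots,l_0-1\}$ — one checks that the only vector of the allowed "difference" shape lying in $(1-\sigma^{p})(\text{that subspace})$ is $0$. Hence the coordinates of $P'$ and $P''$ at level $p-l_0$ agree, $s_{l_0}-\sigma^{p}(s_{l_0})=0$, so $s_{l_0}$ is a scalar multiple of the diagonal vector; but the constraint $s_{l_0,0}=0$ (valid since $l_0\ge 1$) forces $s_{l_0}=0$, contradicting minimality unless $S=1$. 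The same argument with $C(\partial^p)$ in place of $C(\partial^q)$ — now $\sigma^{q}$ is the relevant $q$-cycle, equally a single orbit since $\gcd(p,q)=1$ — shows no nontrivial $S\in C(\partial^p)$ fixes $P'$. Comparing coordinates level by level then yields that two normalised normal forms with different coordinates are non-conjugate.

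\textbf{Main obstacle.} The genuinely delicate part is Step 2's linear-algebra core: verifying that the prescribed zero-pattern (encoded by the sets $N_i$ and the lower-triangular decay for $l<0$) is *exactly* transversal to the image of $1-\sigma^{p}$ restricted to the centraliser-admissible subspace, simultaneously at every order $l$ and compatibly with the $p\ge q$ / $p<q$ dichotomy. This is a purely combinatorial statement about the single cycle generated by $+p$ on $\dz/q\dz$ and which residues fall in $N_i$, but it must be checked with care — an off-by-one in the definition of $N_i$ or in the window $p-q<l<0$ would break either existence or uniqueness. I would isolate this as a standalone combinatorial lemma: for the cyclic $\sigma^{p}$-orbit, the map sending a vector supported on the "free" indices to its class modulo $\mathrm{im}(1-\sigma^{p})$ and modulo the centraliser constraints is a bijection onto the quotient. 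Granting that lemma, everything else is the routine descending induction and minimal-order argument sketched above.
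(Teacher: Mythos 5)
Your overall architecture (passing to the vector form via $\hat{\Phi}$, level-by-level elimination by operators $S_{l'}=1+s_{l'}\tilde{D}^{-l'}$, and a transversality argument for uniqueness) is essentially the paper's, but the decisive computation in Step 1 is wrong as stated. You treat the gauge freedom at each level as the full image of $1-\sigma^{p}$, i.e.\ the sum-zero hyperplane of rank $q-1$, and conclude that $q-1$ components of $p_l$ can be killed, ``leaving one free component''. This ignores that the conjugating coefficient $s_{l'}$ is itself constrained by membership in $C(\partial^q)$ (lemma \ref{L:homog_comm}, remark \ref{R:vector_form}): $s_{l',j}=0$ for $j=0,\ldots,l'-1$, so at level $l=p-l'$ only $q-l'$ parameters are available and only the entries with $j\in N_{p-l}$ (again $q-l'$ of them per period) can be annihilated, leaving $p-l$ free coordinates at that level, not one. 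The ``$q-1$ zeros, one free coordinate'' normalisation you invoke is that of lemma \ref{L:conjugation in E_q}, where $S$ is an arbitrary monic element of $\tilde{K}^{\oplus q}((\tilde{D}^{-1}))$ and need not correspond to conjugation by an element of $C(\partial^q)$ inside $\hat{D}_1^{sym}$; using it would over-normalise and contradict the coordinate count of the lemma you are proving. Your ``main obstacle'' paragraph concedes that the exact matching between the prescribed zero pattern and the image of $1-\sigma^{p}$ restricted to the constrained subspace is left unproven, but this is not a peripheral check --- it is the content of the lemma. The paper proves it directly by writing, for each $l'=1,\ldots,q-1$, the cyclic system $p_{l,(j-1)p}-\tilde{s}_{l,(j-1)p}+\tilde{s}_{l,jp}=\tilde{p}_{l,j-1}$ (indices mod $q$) with $\tilde{s}_{l,0}=0$ and with $\tilde{s}_{l,jp}$ allowed to be nonzero precisely for $j\in N_{l}$, and solving it recursively; the constraints for $l<0$ then come from lemma \ref{L:homog_comm}, as you correctly note.

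There is a second gap in Step 2: you only consider $S=1+s_{l_0}\tilde{D}^{-l_0}+\cdots$, i.e.\ monic of order zero with leading term $1$, whereas the lemma excludes conjugation by \emph{any} invertible $S\in C(\partial^q)$, and invertible elements of positive order do exist (e.g.\ unipotent matrices with entries in $\tilde{K}[D^q]$ under $C(\partial^q)\simeq M_q(\tilde{K}[D^q])$). The paper first reduces to your situation: comparing top homogeneous components of $SP'=P''S$ gives $S_t\partial^p=\partial^pS_t$, and since $S_t$ also commutes with $\partial^q$ and $\gcd(p,q)=1$, its coefficient is a constant, hence not a zero divisor, while the leading coefficient of an invertible element of positive order must be a zero divisor; hence $\Ord(S)=0$ and $S_0$ may be normalised to $1$. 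This reduction (and an actual argument for the $C(\partial^p)$ part of the claim, which you only assert by symmetry) must be supplied before your minimal-order argument applies.
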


\begin{proof}
To show that such normalised form exist, we can follow the arguments in lemma \ref{L:conjugation in E_q}. If $P'$ is any given normal form of $P$ with respect to $Q$,  we will find a monic operator of order zero $S\in C(\partial^q)$ (thus it will be automatically invertible) step by step, as a product of operators $S_l=1+s_l \int^{l}\in C(\partial^q)$, $l=1,\ldots ,q-1$. Since $\hat{\Phi}$ is an embedding of rings, we can provide all calculations in the ring $E_q$. Note that by definition of $\hat{\Phi}$ and from lemma \ref{L:homog_comm} we get $\hat{\Phi}(S_l)=1+\tilde{s}_l \tilde{D}^{-l}$, where $\tilde{s}_{l,j}=0$ for $j=0, \ldots , l-1$. Using calculations from lemma \ref{L:conjugation in E_q}, we get for 
$\tilde{P}'':=\hat{\Phi}( S_l^{-1}P'S_l)= \tilde{D}^p+\ldots + \tilde{p}_l\tilde{D}^{p-l}+\ldots$ that
$$
\begin{cases}
p_{l,0}-\tilde{s}_{l,0}+\tilde{s}_{l,p}=\tilde{p}_{l,0} \\
p_{l,p}-\tilde{s}_{l,p}+\tilde{s}_{l,2p}=\tilde{p}_{l,p} \\
\ldots \\
p_{l,(q-1)p}-\tilde{s}_{l,(m-1)p}+\tilde{s}_{l,0}=\tilde{p}_{l,(q-1)p}
\end{cases}
$$ 
because $GCD(p,q)=1$ (here again all indices are taken modulo $q$). Now we have $\tilde{s}_{l,0}=0$ for all $l=1,\ldots ,q-1$, and, starting with the first equation, we can see that for $j$-th equation, where $j\in N_l$, we can find $\tilde{s}_{l,jp}$ such that $\tilde{p}_{l,(j-1)p}=0$. On the other hand, $\tilde{s}_{l,jp}=0$ for all $j\notin N_l$. Thus, we uniquely determine $\tilde{s}_l$ such that conditions of lemma satisfied for $(p-l)$-th homogeneous component of $\tilde{P}''$. 

On the other hand, since $\tilde{P}''\in C(\partial^q)$, we know from lemma \ref{L:homog_comm} that always $\tilde{p}_{l,j}=0$ for $j=0,\ldots , -l-1$ if $l<0$ (in particular, the number of zero coefficients for $p-q<l<0$ in case $p<q$ is constant). Taking $S=\prod_{j=1}^{q-1}S_j$ we get the needed operator: the normal form $S^{-1}P'S$ will satisfy all conditions as stated. This completes the proof of the first statement. 

To prove the second statement (the uniqueness of such normalised normal form up to conjugation), first note that if $S\in C(\partial^q)$ is invertible and $\Ord (S)=t>0$, then the coefficient of the HCP $S_t$ must be a zero divisor. Indeed, assume the converse. Suppose $\Ord (S^{-1})=r$. Then $(SS^{-1})_{t+r}\neq 0$ hence $r=-t<0$. From lemma \ref{L:homog_comm} we know then the coefficient of the HCP $(S^{-1})_r$ is a zero divisor, hence the coefficient of the HCP  $(S^{-1}S)_{t+r}=1$ must be a zero divisor, a contradiction. By the same reason there are no invertible $S$ with $\Ord (S)<0$ (because all its homogeneous components will be zero divisors by lemma \ref{L:homog_comm}). 

Now suppose we have two normalised normal forms $P'$ and $P''$ with different values of coordinates, and 
$P'=S^{-1}P''S$ for some $S\in C(\partial^q)$. Suppose $\Ord (S)=t\ge 0$. From the equality we have $S_t\partial^p=\partial^pS_t$. Since $p$ and $q$ are coprime, the coefficient of $S_t$ must be a constant. Thus, it is not a zero divisor and therefore $t=0$. Without loss of generality we can assume  $S_0=1$. But then the same equations as above show that all other homogeneous components of $S$ must be zero (as all coefficients of  $P'$ and $P''$ from the list in the formulation are zero) and therefore the equality $P'=S^{-1}P''S$ is impossible. 
\end{proof}

Now we are ready to give a description of a new parametrisation of torsion free sheaves of rank one with vanishing cohomology groups on a projective  curve. First let's recall the classification theorem of rank one commutative subrings of differential operators. 

\begin{theorem}{(\cite[Th. 10.26]{Zheglov_book})}
\label{T:classif2}
There is a one-to-one correspondence 
$$
[B\subset {D_1}\mbox{\quad of rank $1$}]/\thicksim  \longleftrightarrow  [(C,p,\cf  )\mbox{\quad of rank $1$}]/\simeq 
$$
where 
\begin{itemize}
\item
$[B]$ means a class of equivalent commutative elliptic subrings (i.e. $B$ containing a monic differential operator),  where   $B\cong B'$ iff $B=f^{-1}B'f$, $f\in D_1^*=K[[x]]^*$. 
\item
$\thicksim$ means "up to a scale automorphism $x\mapsto c^{-1}x$, $\partial\mapsto c\partial$".
\item $C$ is a (irreducible and reduced) projective curve over $K$, $p$ is a regular $K$-point and $\cf$ is a torsion free sheaf of rank one with $H^0(C, \cf)=H^1(C,\cf )=0$ (a spectral sheaf). 
\item  $[]/\simeq$ on the right hand side means an equivalence class of triples up to a natural isomorphism.
\end{itemize}
\end{theorem}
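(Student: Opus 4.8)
The plan is to establish both directions of the correspondence and then to check that the two assignments are mutually inverse and descend to the indicated equivalence classes. This is the algebro-geometric classification of rank one commuting ordinary differential operators (Krichever, Mumford, Segal--Wilson, Mulase; see \cite{Mu}, \cite{Li_Mulase}, \cite{SW}, \cite{Zheglov_book}), and I sketch the algebraic version streamlined by the normal-form machinery developed above.

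\emph{From $B$ to $(C,p,\cf)$.} Given a rank one commutative elliptic $B\subset D_1$, I would choose a monic $Q\in B$, reduce it to normalised form (Example~\ref{Ex:normalised_dif_operators}) of order $q$, take a Schur operator $S$ for $Q$ as in Proposition~\ref{P:Si}, and pass to $B':=S^{-1}BS\subset C(\partial^q)$. By Proposition~\ref{T:A.Z 7.1} and Remark~\ref{R:algebraic_dependence_of_normal_forms} the map $\hat\Phi$ embeds $B'$ into $M_q(\tilde K[D^q])$, so $B'\cong B$ is a finitely generated $K$-domain of Krull dimension one. Set $C:=\Projj R_B$, where $R_B$ is the Rees algebra of $B$ for the filtration by order; this compactifies $\Spec B$ by a single closed point $p$, and $(C,p)$ may be read off equally from $B'$ since the order filtration is conjugation-invariant. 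Regularity of $p$ is a consequence of rank one: choosing $P'\in B'$ of order coprime to $q$ and applying Corollary~\ref{C:Schur_subring}, one may assume $B'\subset\tilde K((\tilde D^{-1}))$, so the completed local ring of $C$ at infinity is the DVR $\tilde K[[\tilde D^{-1}]]$. For the sheaf, I would take the natural $B$-module coming from the action of $\hat D_1^{sym}$ on $F=\hat D_1^{sym}/\idm\hat D_1^{sym}\cong K[\partial]$ (equivalently, on $\hat R$; cf. \cite{BZ}), sheafify it over $\Spec B=C\setminus p$ and extend over $p$; the result $\cf$ is torsion free of rank one, and writing $V:=H^0(C\setminus p,\cf)$ the \v{C}ech/Beauville--Laszlo complex for $\{\Spec B,\ \Spec\widehat{\co}_{C,p}\}$ identifies $H^0(C,\cf)$ with $V\cap\widehat{\cf}_p$ and $H^1(C,\cf)$ with $\Frac(\widehat{\co}_{C,p})/(V+\widehat{\cf}_p)$; both vanish because $V$, being $F$ carried through $S$, is a deformation of the standard subspace $K[\partial]\subset K((\partial^{-1}))$ and hence transverse to $\widehat{\cf}_p$.

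\emph{From $(C,p,\cf)$ to $B$.} Conversely, given a triple, I would fix a formal parameter $z$ at $p$ so $\widehat{\co}_{C,p}\cong K[[z]]$; since $\cf$ is torsion free of rank one and $p$ is regular, $\cf$ is free at $p$, so a trivialisation gives $\widehat{\cf}_p\cong K[[z]]$. Put $A:=H^0(C\setminus p,\co_C)$ and $V:=H^0(C\setminus p,\cf)$; via $z$ and the trivialisation these sit inside $K((z))$, with $A$ a subalgebra stabilising $V$, and the same \v{C}ech argument identifies $H^0(C,\cf)$ with $V\cap K[[z]]$ and $H^1(C,\cf)$ with $K((z))/(V+K[[z]])$, so the cohomology hypothesis is precisely the transversality $V\oplus K[[z]]=K((z))$. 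I would then perform the Sato/Segal--Wilson dressing (in the spirit of Lemma~\ref{L:conjugation in E_q}): identifying $z^{-1}$ with $\partial$, the transversality produces a unique monic order-zero pseudodifferential operator $S$ carrying the standard subspace $K[\partial]$ onto $V$, and the conjugates $Sm_aS^{-1}$ of the multiplication operators $m_a$ ($a\in A$, which stabilises $V$) form a commutative family. The crucial point --- where this paper's results enter --- is that $Sm_aS^{-1}\in D_1$: one checks that these operators satisfy condition $A_q(0)$ with homogeneous components free of $A_i$ and invokes the criterion Theorem~\ref{T: P in hat(D) is a dif_op}; equivalently, $S$ is recognised as the Schur operator of a monic differential operator and the analysis of Section~\ref{S:necessary conditions on the Schur} is run in reverse. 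Setting $B:=\{Sm_aS^{-1}\mid a\in A\}$ yields a rank one commutative elliptic subring of $D_1$.

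\emph{Inverseness and descent.} The two constructions are inverse by design: $\Projj$ of the Rees algebra of the reconstructed $B$ returns $(C,p)$; the dressing $S$ matches $V$ with the module $F$ of the first construction; and $A$ is recovered as the image of $B$ in $\tilde K((\tilde D^{-1}))$ under $\hat\Phi$ and the normalisation of Corollary~\ref{C:Schur_subring}. Conjugating $B$ by $f\in K[[x]]^{*}$ merely changes the formal trivialisation of $\cf$ at $p$, hence the triple only up to isomorphism, while $x\mapsto c^{-1}x$, $\partial\mapsto c\partial$ corresponds to rescaling $z$; so the maps descend to the quotients and are mutually inverse there. \textbf{The main obstacle} I anticipate is the backward direction: proving that the dressed operators are genuine differential operators rather than merely pseudodifferential --- which is exactly what the normal-form criterion Theorem~\ref{T: P in hat(D) is a dif_op} is built to supply --- together with the careful identification of the cohomological conditions on $\cf$ with the transversality (big-cell) condition, using only local freeness of $\cf$ at the smooth point $p$.
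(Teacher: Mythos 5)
You should first note that the paper does not actually prove Theorem \ref{T:classif2}: it is quoted from \cite[Th. 10.26]{Zheglov_book}, and Remark \ref{R:proof_explanations} records that the self-contained proof given there runs through embedded Schur pairs (Theorem \ref{T:classif_Schur_pairs}, the Krichever map of Remark \ref{R:Krichever_map}), i.e. an elaboration of Mulase's argument. Your outline follows that classical route in broad strokes (one-point compactification via the order filtration, the pair $(A,W)$, cohomology vanishing as transversality, dressing), so in spirit it matches the cited proof rather than anything established in this paper.

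The genuine gap is in your backward direction, exactly at the step you yourself flag as the main obstacle. The dressing operator produced by transversality of $V\subset K((z))$ lives in the Sato ring $E=K[[x]]((\partial^{-1}))$, and so do the conjugates $Sm_aS^{-1}$; Theorem \ref{T: P in hat(D) is a dif_op} cannot be invoked for them, because that criterion concerns operators of $\hat{D}_1^{sym}\hat{\otimes}_K\tilde{K}$ (series in non-negative powers of $\partial$ whose homogeneous components are HCPs from $Hcpc(q)$), and ``checking condition $A_q(0)$'' for $Sm_aS^{-1}$ presupposes that these conjugates already lie in $\hat{D}_1^{sym}$ --- which is essentially the statement to be proved. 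The classical correspondence closes this hole with a different key lemma (Mulase's trick): a pseudodifferential operator $P\in E$ lies in $D_1$ if and only if it preserves the subspace $F=K[\partial]$ under the right action coming from the Sato homomorphism \eqref{E:Sato_hom}; since $W=F\cdot S$ is stable under $A$, one gets $F\cdot (SaS^{-1})=(W\cdot a)\cdot S^{-1}\subseteq F$, hence $SaS^{-1}\in D_1$. Without stating and using such a lemma (or reproving it), your reconstruction of $B$ from $(C,p,\cf )$ is incomplete. In the forward direction the assertion that $V$ is transverse to $\widehat{\cf}_p$ ``because it is a deformation of $K[\partial]$'' is likewise only asserted --- it is exactly the support condition $\Sup W=K[z^{-1}]$ of Definition \ref{D:Schur_pair} and needs the order-filtration bookkeeping --- but that part is standard; the missing differential-operator lemma is the substantive gap.
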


\begin{rem}
\label{R:proof_explanations}
The self contained proof of this theorem in such a form is given in \cite[Ch. 9, 10]{Zheglov_book}, and it uses two other correspondences between these data and equivalence classes of Schur pairs (we recall the definition below, as they will play a key role in our statements). The proof given in \cite{Zheglov_book} is an elaborated version of Mulase's proof from \cite{Mu} in a spirit of works \cite{Parshin2001}, \cite{Os}  and their  higher dimensional generalisations in \cite{Zheglov2013}, \cite{KOZ2014}.  In one direction, from the subring $B$ to geometric spectral data, the construction of this correspondence is the following: consider the Rees ring $\tilde{B}=\oplus_{i\ge 0}B_{i}s^i\subset B[s]$, where $B_i:=\{P\in B |\quad \ord (P)\le ir\}$. The curve $C$ is defined as $C:=\Projj \tilde{B}$. The ideal $(s)$ defines a smooth point $p$ on $C$. Analogously, the spectral module $F$  is endowed with a natural filtration given by the order function: $F_i:=\{ f\in F| \quad \ord (f)\le i\}$. With this filtration we can associate a  graded module and corresponding associated spectral sheaf:  
$$
 {}^{(-1)}\tilde{F}:=\oplus_{i\ge 0}F_{ir-1}s^i, \quad \cf :=\Projj {}^{(-1)}\tilde{F}.
$$ 
In other direction, from the spectral data to the ring $B$, one can use the Krichever map from remark \ref{R:Krichever_map} below to construct a corresponding Schur pair (see definition \ref{D:Schur_pair} below), and then theorem \ref{T:classif_Schur_pairs} to construct the ring $B$.

Recall that this classification has a long history: the first classification results of commuting pairs of ODOs with coprime orders appeared in works \cite{BC1}-\cite{BC3}. After that Krichever in works \cite{Kr1}, \cite{Kr2} gave classification of commutative subrings of ODOs  of any rank in general position in terms of geometric data. His version of classification theorems had a more analytical nature. The other versions (have more algebraic nature) are due to Mumford \cite{Mumford_article}, Drinfeld \cite{Dr}, Verdier \cite{Verdier} and Mulase \cite{Mu} (cf. also an important paper by Segal and Wilson \cite{SW}). 
\end{rem}

Let $B\subset D_1$ be an elliptic subring. Then by Schur theory from \cite{Schur}, cf. \cite[T. 4.6, C. 4.7]{Zheglov_book}, there exists an invertible operator $S=s_0+s_1\partial^{-1} +\ldots $ in the usual (Schur's) ring of pseudo-differential operators $E=K[[x]]((\partial^{-1}))$ such that $A:=S^{-1}BS \subset K((\partial^{-1}))$. Consider the homomorphism (of vector spaces)
\begin{equation}
\label{E:Sato_hom}
E\rightarrow E/xE \simeq K((\partial^{-1}))
\end{equation}
(sometimes it is called {\it the Sato homomorphism}). It defines a structure of an $E$-module on the space  $K((\partial^{-1}))$: for any $P\in K((\partial^{-1}))$, $Q\in E$ we put  $P\cdot Q= PQ$ (mod $xE$). 

Analogously, the homomorphism 
$$
1\circ :\tilde{K}[A_1]((\tilde{D}^{-1}))\simeq \tilde{K}^{\oplus q}((\tilde{D}^{-1})) \subset E_q\rightarrow  \tilde{K}((\tilde{D}^{-1})), \quad \sum_l p_l\tilde{D}^l \mapsto  \sum_l p_{l,0}\tilde{D}^l
$$
(cf. lemma \ref{L:conjugation in E_q}) defines a structure of a $\tilde{K}[A_1]((\tilde{D}^{-1}))$-module on the space  $\tilde{K}((\tilde{D}^{-1}))$: for any $P\in \tilde{K}((\tilde{D}^{-1}))$ and $Q\in \tilde{K}[A_1]((\tilde{D}^{-1}))$ we put $P\cdot Q= 1\circ (PQ)$. Analogously, $K((\tilde{D}^{-1}))\subset \tilde{K}((\tilde{D}^{-1}))$ is a right $K^{\oplus q}((\tilde{D}^{-1}))$-module. 

Now define the space $W:= F\cdot S \subset K((\partial^{-1}))$ (here $F$ is the same as in theorem \ref{T:A.Z 2.1}). Note that $W$ is an $A$-module, where the module structure is defined via the multiplication in the {\it field} $K((\partial^{-1}))$ and this module structure is induced by the $E$-module structure on $K((\partial^{-1}))$, because $K((\partial^{-1}))\subset E$ and $W\cdot A=(F\cdot S)\cdot (S^{-1}BS)=F\cdot (BS)=(F\cdot B)\cdot S= F\cdot S= W$. 
Note also that the modules $W$ and $F$ are isomorphic ($W$ is an $A$-module, $F$ as a $B$-module, and clearly $A\simeq B$). For convenience of notation, we will replace $\partial^{-1}$ by $z$ in the field $K((\partial^{-1}))$, i.e. $A,W\subset K((z))\simeq K((\partial^{-1}))$. 

Analogously, we can define the space $W':=F'\cdot S\subset \tilde{K}((\tilde{D}^{-1}))$, where $F'=\tilde{K}[\tilde{D}]$ and $S\in \tilde{K}[A_1]((\tilde{D}^{-1}))$ is an operator from lemma 
\ref{L:conjugation in E_q} or $W'':=F''\cdot S\subset K((\tilde{D}^{-1}))$, where $F''=K[\tilde{D}]$ if $S\in K^{\oplus q}((\tilde{D}^{-1}))$. 

If $B'=\hat{\Phi}(S^{-1}BS)$, where $S$ is Schur operator from proposition \ref{T:A.Z 7.2} (constructed for some monic differential operator from $B$), and $A':=S^{-1}B'S$, where $S$ is an operator from lemma \ref{L:conjugation in E_q} (constructed for some operator operator $P'\in B'$), and, by remark \ref{R:S_over_K} $S\in K^{\oplus q}((\tilde{D}^{-1}))$, so that $A'\subset K((\tilde{D}^{-1}))$, 
then, clearly, the modules $F''$ and $F$ are isomorphic ($F''$ as a $B'$-module, $F$ as a $B$-module), and  
$W''$ and $F$ are isomorphic ($W''$ is a $A'$-module, $F$ as a $B$-module), so also $W''\simeq W$. Moreover, all these modules are isomorphic as filtered modules (with respect to the order filtration). 

For subrings in $K((z))$ we can introduce the same notion of rank as for subrings in $D_1$\footnote{here $z$ will play a role of $\partial^{-1}$ or $\tilde{D}^{-1}$}:

\begin{Def}
\label{D:Q5.1} 
Let $A$ be a $K$-subalgebra of $K((z))$, and $r\in \dn$. $A$ is said to be an algebra of rank $r$ if 
$r= \gcd (\ord(a)|\quad  a\in A)$, where the order is defined in the same way as the usual order  in $D_1$ (cf. remark \ref{R:ord=deg}).
\end{Def}

\begin{Def}
\label{D:support} 
Let $W$ be a $K$-subspace in $K((z))$. The {\it support} of an element $w\in W$ is its highest symbol, i.e. $\sup (w):=HT(w)z^{-\ord (w)}$. The {\it support} of the space is\\
 $\Sup W:=\langle \sup (w)|\quad w\in W\rangle$. 
\end{Def}

\begin{Def}
\label{D:Schur_pair}
An embedded Schur pair of rank $r$  is a pair $(A,W)$ consisting of 
\begin{itemize}
\item
$A\subset K((z))$ a $K$-subalgebra of rank $r$ satisfying $A\cap K[[z]]=K$;
\item 
$W\subset K((z))$ a $K$-subspace with $\Sup W= K[z^{-1}]$
\end{itemize}
such that $W\cdot A\subseteq W$.
\end{Def} 

So, to any elliptic subring $B\subset D_1$ we can associate an embedded Schur pair. 

\begin{Def}
\label{D:equiv_Schue_pair}
Two embedded Schur pairs   $(A_i, W_i)$, $i=1,2$ of rank $r$ are {\it equivalent} if there exists an admissible operator $T$ such that $A_1=T^{-1}A_2T$, $W_1=W_2\cdot T$. An operator $T=t_0+t_1\partial^{-1}+\ldots $ is called {\it admissible} if $T^{-1}\partial T\in K((\partial^{-1}))$. 
\end{Def}

\begin{theorem}
\label{T:classif_Schur_pairs}
There are one-to-one correspondences  $[B]\longleftrightarrow [(A,W)]$ and $[B]/\thicksim\longleftrightarrow [(A,W)]/\thicksim$, where $\thicksim$ means the equivalence from theorem \ref{T:classif2} for rank one data.
\end{theorem}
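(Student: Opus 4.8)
The plan is to establish the correspondence $[B]\longleftrightarrow[(A,W)]$ by showing that the construction $B\mapsto(A,W)$ described just before the theorem is well-defined on equivalence classes, injective, and surjective, and then to descend to the quotient by the scale automorphism $\thicksim$. First I would verify that $B\mapsto(A,W)$ indeed produces an embedded Schur pair of rank one in the sense of Definition~\ref{D:Schur_pair}: by the classical Schur theory invoked above there is $S=s_0+s_1\partial^{-1}+\ldots\in E$ with $A:=S^{-1}BS\subset K((\partial^{-1}))$; the condition $A\cap K[[z]]=K$ follows because $B$ is elliptic (it contains a monic operator, so $A$ contains an element of each sufficiently divisible negative order, and any element of $A\cap K[[z]]$ would correspond to an operator in $B$ of nonpositive order, forcing it to be constant by the standard argument on $D_1$); and $\Sup W=K[z^{-1}]$ because $W=F\cdot S$ with $F=K[\partial]$ and $S$ invertible of order zero, so $W$ has one element of each nonnegative order with invertible leading coefficient. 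The inclusion $W\cdot A\subseteq W$ is exactly the computation $W\cdot A=(F\cdot S)\cdot(S^{-1}BS)=F\cdot(BS)=(F\cdot B)\cdot S=F\cdot S=W$ already displayed in the text.

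Next I would check that the class $[(A,W)]$ does not depend on the choice of the Schur operator $S$. If $S'$ is another operator with $S'^{-1}BS'\subset K((\partial^{-1}))$, then $T:=S^{-1}S'$ satisfies $T^{-1}AT=A'$ (where $A'=S'^{-1}BS'$) and $W'=F\cdot S'=W\cdot T$; moreover $T^{-1}\partial T\in K((\partial^{-1}))$ since $\partial=S^{-1}(S\partial S^{-1})S$ and both $S\partial S^{-1}$-type conjugates lie in the commutative field after conjugation, so $T$ is admissible in the sense of Definition~\ref{D:equiv_Schue_pair}. Conversely, one has to produce the inverse map $[(A,W)]\mapsto[B]$: given an embedded Schur pair $(A,W)$ of rank one, the condition $\Sup W=K[z^{-1}]$ allows one to build, by the standard Sato-Mulase argument, an invertible admissible operator $S\in E$ with $F\cdot S=W$, and then $B:=SAS^{-1}\subset D_1$ is an elliptic subring; the key point is that conjugating a commutative subring of $K((\partial^{-1}))$ by such an $S$ lands back inside $D_1$ rather than merely in $E$ — this is precisely where the hypothesis $\Sup W=K[z^{-1}]$ (equivalently, that $W$ is a point of the big cell of the Sato Grassmannian) is used. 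One then checks the two assignments are mutually inverse up to the equivalences and that equivalence of Schur pairs translates into $B\sim B'$, i.e. $B=f^{-1}B'f$ with $f\in K[[x]]^*$, by tracking the leading term $t_0$ of the admissible operator $T$.

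Finally, to pass to $[B]/\thicksim\longleftrightarrow[(A,W)]/\thicksim$, I would observe that the scale automorphism $x\mapsto c^{-1}x$, $\partial\mapsto c\partial$ of $D_1$ induces on $K((\partial^{-1}))=K((z))$ the substitution $z\mapsto c^{-1}z$, hence acts compatibly on both sides of the first correspondence, and the quotient correspondence is obtained by functoriality. The main obstacle I anticipate is the surjectivity direction — showing that an abstract embedded Schur pair $(A,W)$ comes from an honest subring $B\subset D_1$. This requires the Sato-type construction of the dressing operator $S$ from the subspace $W$ and, crucially, the verification that $SAS^{-1}$ consists of genuine differential operators (finitely many terms, no negative powers of $\partial$); this is the technical heart and is handled in the cited references \cite{Mu}, \cite{Zheglov_book} via the analysis of the kernel of the Sato homomorphism~\eqref{E:Sato_hom} and the ellipticity coming from $A\cap K[[z]]=K$ together with rank one. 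Since the excerpt refers to \cite[Ch.~9,~10]{Zheglov_book} for exactly this statement, I would structure the proof as a reduction to those results, spelling out only the compatibility of the equivalences and the rank-one bookkeeping.
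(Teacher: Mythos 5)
The paper does not actually prove Theorem \ref{T:classif_Schur_pairs}: it is recalled as a known classification result, with only the remark that a self-contained proof can be found in \cite[10.3]{Zheglov_book}, so there is no internal argument to compare yours against. Your outline is the standard Sato--Mulase dressing-operator argument that the cited reference carries out (construction of $(A,W)$ from $B$, recovery of $B\subset D_1$ from a big-cell subspace $W$, and the bookkeeping of equivalences), and your concluding reduction to \cite[Ch.~9, 10]{Zheglov_book} (cf.\ also \cite{Mu}) matches the paper's treatment exactly. One step you should not present as already done is the well-definedness claim that $T=S^{-1}S'$ is admissible: the justification ``$\partial=S^{-1}(S\partial S^{-1})S$ and both conjugates lie in the commutative field after conjugation'' is not an argument, since $S\partial S^{-1}$ is not an element of $B$ and there is no a priori reason $S'$ conjugates it into $K((\partial^{-1}))$. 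The conclusion is nevertheless true for rank-one data, but it rests on a genuine lemma --- any $T$ with $T^{-1}AT\subset K((\partial^{-1}))$ and $A$ of rank one satisfies $T^{-1}\partial T\in K((\partial^{-1}))$, proved e.g.\ by uniqueness of monic $p$-th roots in $E$ and by writing $\partial$ as a constant-coefficient Laurent series in a first-order element of $K((\partial^{-1}))$ built from elements of $A$ of coprime orders --- and this is exactly among the details supplied by the reference you ultimately defer to, consistently with how the paper itself handles the statement.
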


A self contained proof of this theorem see e.g. in \cite[10.3]{Zheglov_book}.  In one direction, from $B$ to the Schur pair, the construction of this correspondence is described above. In other direction the Sato theorem is used (see \cite{SN}, cf. \cite[Appendix]{Mu}) to find the corresponding Sato operator $S$ such that $W=F\cdot S$ and then $B=SAS^{-1}$.

Let $C$ be a projective curve over $K$ and $p\in C$ be a regular $K$-point. Then $C_0:=C\backslash p$ is an affine curve. According to theorem \ref{T:classif2} for any torsion free rank one sheaf $\cf$ on $C$ there exists a normalised commutative elliptic ring of differential operators $B\subset D_1$ defined uniquely up to a scale transform, such that $B$ is isomorphic to the ring of regular functions on $C_0$, $B\simeq \co_{C_0}(C_0)$. Vice versa, any such ring $B$ is isomorphic to the ring of regular functions on some affine curve $C_0$, which can be compactified with the help of one regular $K$-point. 

\begin{Def}
\label{D:affine_spec_curve}
We'll call an affine curve $C_0$ over $K$ as {\it affine spectral curve} if it can be compactified with the help of one regular $K$-point, i.e. if there exists a projective curve $C$ over $K$ and a regular $K$-point $p$ such that $C_0\simeq C\backslash p$ (note that such $C$ is uniquely defined up to an isomorphism, see e.g. \cite[Ch1., \S 6]{Ha}).
\end{Def}

\begin{rem}
\label{R:Krichever_map}
There is the Krichever map 
$$
\chi_0:(C,p,\cf ,\pi , \hat{\phi} ) \rightarrow (A,W)
$$
defined for any coherent torsion free sheaf $\cf$ and any trivialisations $\pi, \hat{\phi}$ (for details see \cite[Ch.10]{Zheglov_book}). If $\cf =\co_C$, then $W=A$ and the rank of $A$ is $1$. Moreover, elements of $A$ are Laurent series expansions of functions from $\co_C(C\backslash p)$, s.t. the orders of elements of $A$ are the pole orders of corresponding functions. Differential operators corresponding to elements of $A$ via the correspondence from theorems \ref{T:classif2}, \ref{T:classif_Schur_pairs} have the same order as the elements. 

If $\rk (\cf )=1$, then $\rk (A)=1$ and the data $(C,p,\cf ,\pi )$ uniquely determines the class $[B]$ from theorem \ref{T:classif2}, see \cite[Rem. 10.24]{Zheglov_book}. 

If $\rk (A)=1$, then we can always choose a system of generators $a_1, \ldots , a_m$ of $A$ as a $K$-algebra,  such that $\ord (a_1)$ is coprime with the orders of other generators. Without loss of generality $a_i$ can be assumed to be monic. Conjugating $A$ by a suitable admissible operator (and using the usual Schur theory), we can get $a_1=z^{-\ord (a_1)}$. 

Since any pair of generators $(a_1, a_i)$ correspond to some differential operators of coprime orders, they are algebraically dependent and satisfy  some equations of Burchnall-Chaundy type $f_i(X,Y) =  X^q\pm Y^p+\ldots =0$ (see remark \ref{R:algebraic_dependence_of_normal_forms}). Vice versa, it's easy to see, using standard arguments from Hensel's lemma, that this equation uniquely determines a monic element $a_i$ from $A$ of a given order such that $f_i(a_i, z^{-\ord (a_1)})=0$. Thus, the equations $f_i$ completely determine the subring $A$ in $K((z))$. 

All correspondences in the classification theorems \ref{T:classif2}, \ref{T:classif_Schur_pairs} are relevant to the extension of scalars, i.e. if the sheaf $\cf$ with vanishing cohomologies is defined over an extension $\bar{K}$ of $K$, then the corresponding Schur pairs and the ring of commuting differential operators are determined over $\bar{K}$, and vice versa. As we'll see below, the corresponding  normalised normal forms will be determined over $\bar{K}$ as well.
\end{rem}

Let $B \subset D_1$ be an elliptic commutative subring of ODOs. Let $P_1, \ldots ,P_m$ be its monic generators over $K$ (any generators have constant highest coefficients, cf. \cite[Ch.3]{Zheglov_book}), such that $\Ord (P_1)=q$ is coprime with the order of $P_2$.
$$
B\simeq K[P_1,\ldots ,P_m]\simeq K[T_1,\ldots ,T_m]/I,
$$
where $I=(f_1,\ldots ,f_k)$ is a prime ideal, $f_i\in K[T_1,\ldots ,T_m]$. By  proposition \ref{P:normalized_Schur} and lemma \ref{L:normalisation} there exists a uniquely determined Schur operator $S\in \hat{D}_1^{sym}$ such that $B':=S^{-1}BS\in C(\partial^q)$ and $P_2'=S^{-1}P_2S$ is a normalised normal form of $P_2$ with respect to $P_1$. From lemma \ref{L:normalisation} we immediately get that the coefficients of all other normal forms $P_i'$ are uniquely determined. Obviously, $f_i(P_1', \ldots , P_m')=0$ for all $i$, and therefore define a set of equations on the coefficients of $\hat{\Phi}(P_i')\in \mathfrak{B}$ in an affine space. Note that any point of the affine algebraic set determined by these equations together with equations coming from commutation relations between $P_2', \ldots , P_m'$ 
defines a set of coefficients of commuting operators from $C(\partial^q)$, and these operators generate a ring isomorphic to $B$\footnote{with the isomorphism sending them to $T_1, \ldots ,T_m$}. 

\begin{Def}
\label{D:normalised_normal_form}
We'll call a commutative ring $B'\in C(\partial^q)\subset \hat{D}_1^{sym}$ generated over $K$ by monic operators $P_1'=\partial^q, P_2', \ldots, P_m'$, where $P_2'$ is normalised and $q$ is coprime with the order of $P_2'$,  as a {\it normalised normal form} of rank one with respect to a (ordered) set of generators $P_1',\ldots , P_m'$, and we'll call the coefficients of the operators $P_i'$ as coordinates of $B'$ (cf. lemma \ref{L:normalisation}). 

We'll denote by $X_{[B']}$ the corresponding affine algebraic set determined by the relations on coefficients of operators $P_i'$ (it is defined by an isomorphism class of the ring $B'$). 
\end{Def}

\begin{lemma}
\label{L:solution_of_BC_equation}
Let $f(X,Y) =  X^q\pm Y^p+\ldots \in K[X,Y]$ be a Burchnall-Chaundy polynomial with coprime $p,q$. Assume $P'\in C(\partial^q)\subset \hat{D}_1^{sym}\hat{\otimes}\tilde{K}$ is a monic operator with $\Ord (P')=p$ such that $f(P',\partial^q)=0$. 

Then $a:= S^{-1}\hat{\Phi}(P')S\in \tilde{K}((\tilde{D}^{-1}))$, where $S$ is an operator from lemma \ref{L:conjugation in E_q}, and $a$ is the uniquely defined monic element in $\tilde{K}((\tilde{D}^{-1}))$, satisfying the equation $f(a,\tilde{D}^q)=0$. 
\end{lemma}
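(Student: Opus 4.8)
The plan is to combine three facts already at our disposal: the conjugation lemma \ref{L:conjugation in E_q} which produces the operator $S$ such that $S^{-1}\hat{\Phi}(P')S$ has all coefficients commuting with $\tilde{D}^d$ (here $d=\GCD(p,q)=1$, so the coefficients actually lie in the diagonal $\tilde{K}\subset\tilde{K}^{\oplus q}$, i.e. $a:=S^{-1}\hat{\Phi}(P')S\in\tilde{K}((\tilde{D}^{-1}))$); the fact that $\hat{\Phi}$ is a ring embedding (lemma \ref{L:embedding_Phi}) and that $\tilde{D}^q=\hat{\Phi}(\partial^q)$ is fixed by conjugation by $S$, since by lemma \ref{L:homog_comm} (or by construction in lemma \ref{L:conjugation in E_q}) $S$ commutes with $\tilde{D}^q$; and the standard Hensel-type uniqueness for monic solutions of Burchnall--Chaundy type equations in $\tilde{K}((\tilde{D}^{-1}))$, already invoked in remark \ref{R:Krichever_map}.

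First I would note that applying the ring homomorphism $\hat{\Phi}$ to the identity $f(P',\partial^q)=0$ gives $f(\hat{\Phi}(P'),\tilde{D}^q)=0$ in $E_q$, since $f$ has coefficients in $K\subset\tilde{K}$ and $\hat{\Phi}$ is $\tilde{K}$-linear and multiplicative; this uses that $P'$ lies in the centraliser $C(\partial^q)\subset\widehat{Hcpc}_B(k)$ by corollary \ref{C:centraliser_free_of_B_j}, so it is in the domain of $\hat{\Phi}$. Next I would conjugate by $S$: because $S^{-1}\tilde{D}^qS=\tilde{D}^q$ (as $S$ commutes with $\tilde{D}^q$, being built from operators $S_l=1+\tilde{s}_l\tilde{D}^{-l}$ with $\tilde{s}_l$ commuting with $\tilde{D}^q$ by the structure forced in lemma \ref{L:conjugation in E_q} when the target commutes with $\tilde{D}^d=\tilde{D}$ — here one should be a little careful and instead simply note $\partial^q\in B'$ or invoke that $S$ was chosen so that conjugation fixes $\tilde{D}^q$), we get $f(a,\tilde{D}^q)=S^{-1}f(\hat{\Phi}(P'),\tilde{D}^q)S=0$. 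Also $a$ is monic with $\ord_{\tilde{D}}(a)=p$, since $S$ is monic of order zero and $\hat{\Phi}(P')$ is monic of order $p$ (by the fourth condition in $A_q(k)$ for $P'$, $\sigma(P')=\partial^p$, so the highest term of $\hat{\Phi}(P')$ is $\tilde{D}^p$). Finally, since $a\in\tilde{K}((\tilde{D}^{-1}))$ and its coefficients genuinely lie in $\tilde{K}$ (the commutant of $\tilde{D}$ being exactly the diagonal), the equation $f(a,\tilde{D}^q)=0$ is an equation in the commutative field $\tilde{K}((\tilde{D}^{-1}))$.

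For uniqueness: regard $f(X,\tilde{D}^q)$ as a monic (up to sign) polynomial of degree $q$ in $X$ over the field $\tilde{K}((\tilde{D}^{-1}))$ — here I use that $f=\alpha X^q\pm Y^p+\ldots$ has leading term $\alpha X^q$ with $\alpha$ a nonzero constant, and the displayed weighted-degree/``special form'' condition guarantees that after substituting $Y=\tilde{D}^q$ the coefficient of $X^j$ has $\tilde{D}$-order strictly less than $(q-j)p$ for $j<q$, with the coefficient of $X^0$ equal to $\pm\tilde{D}^{pq}+\ldots$. Then an elementary Hensel/Newton-polygon argument in the complete discretely valued field $\tilde{K}((\tilde{D}^{-1}))$ shows that there is exactly one root of $\tilde{D}$-order $p$, and it is monic; this is precisely the reasoning already used in remark \ref{R:Krichever_map} (``using standard arguments from Hensel's lemma''). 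Hence $a$ is the unique such element, which is the assertion.

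The main obstacle I anticipate is the bookkeeping around the conjugation step: one must justify cleanly that $S^{-1}\tilde{D}^qS=\tilde{D}^q$, i.e. that the operator $S$ produced in lemma \ref{L:conjugation in E_q} (applied to $\tilde{P}=\hat{\Phi}(P')$) really fixes $\tilde{D}^q$ rather than merely making the coefficients of $S^{-1}\tilde{P}S$ commute with it. This follows because each factor $S_l=1+s_l\tilde{D}^{-l}$ is, by the explicit formulas in that lemma, built from $s_l\in\tilde{K}^{\oplus q}$, and conjugating $\tilde{D}^q$ by $1+s_l\tilde{D}^{-l}$ changes it by terms involving $s_l-\sigma^{q}(s_l)=0$ since $\sigma^q=\mathrm{id}$ on $\tilde{K}^{\oplus q}$; so in fact every element of $\tilde{K}^{\oplus q}((\tilde{D}^{-1}))$ commutes with $\tilde{D}^q$ and the point is automatic. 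Once this is observed the rest is routine, and the remaining care is only to confirm that $a$ indeed has coefficients in $\tilde{K}$ (not just commuting-with-$\tilde{D}^d$ for $d=1$, which is the same statement).
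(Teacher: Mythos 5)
Your argument is correct and takes essentially the same route as the paper, whose proof is a one-line appeal to corollary \ref{C:Schur_subring} (conjugation of the commutative subring generated by $\hat{\Phi}(P')$ and $\tilde{D}^q$ into $\tilde{K}((\tilde{D}^{-1}))$, with $\tilde{D}^q$ central and hence fixed) together with the Hensel-type uniqueness already invoked in remark \ref{R:Krichever_map}; you have simply written out these steps. One small imprecision: the Newton-polygon/Hensel argument shows that all roots of $f(X,\tilde{D}^q)$ have $\tilde{D}$-order $p$ and that the \emph{monic} root is unique (there may be up to $q$ roots of order $p$ in $\tilde{K}((\tilde{D}^{-1}))$, one for each admissible leading coefficient), which is exactly the uniqueness statement the lemma needs, rather than "exactly one root of order $p$".
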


The proof is  obvious in view of remark \ref{R:Krichever_map} and corollary \ref{C:Schur_subring}. In particular, from theorem below it follows that any monic operator $P'$ from lemma together with $\partial^q$ form a normal from of some pair of differential operators. 

\begin{theorem}
\label{T:parametrisation}
Let $C_0$ be an affine spectral curve over $K$ and $C$ its one-point compactification. Assume 
$$
\co_C(C_0)\simeq K[w_1,\ldots ,w_m]\simeq K[T_1,\ldots ,T_m]/I,
$$
where $I=(f_1,\ldots ,f_k)$ is a prime ideal and the order of $w_1$ is coprime with the order of $w_2$,  and the images of $w_i$ under the Krichever map (after some choice of $\pi$) are monic (cf. remark \ref{R:Krichever_map}). 

Then there exist normalised normal forms of rank one $B'\simeq \co_C(C_0)$ with respect to the ordered set of generators $P_1'=\partial^q, \ldots , P_m'$, where $\Ord (P_i')=\ord (w_i)$ for all $i\ge 1$, and there is a one to one correspondence between closed $K$-points of the affine algebraic set $X_{[\co_C(C_0)]}$ and isomorphism classes of torsion free rank one sheaves $\cf$ on $C$ with vanishing cohomologies $H^0(C,\cf )= H^1(C,\cf )=0$.
\end{theorem}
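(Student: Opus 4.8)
The plan is to establish the bijection by combining the classification theorems already at our disposal with the explicit description of normalised normal forms from Lemma \ref{L:normalisation}. First I would set up the correspondence in one direction: given a torsion free rank one sheaf $\cf$ on $C$ with $H^0(C,\cf)=H^1(C,\cf)=0$, Theorem \ref{T:classif2} produces (up to the scale equivalence $\thicksim$) a normalised commutative elliptic subring $B\subset D_1$ with $B\simeq \co_C(C_0)$, and the choice of trivialisations $\pi,\hat\phi$ in the Krichever map (Remark \ref{R:Krichever_map}) pins down generators $P_1,\ldots,P_m$ of $B$ mapping to $w_1,\ldots,w_m$, with $\ord(P_i)=\ord(w_i)$ and $P_i$ monic. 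Since $\Ord(P_1)=q$ is coprime to $\ord(w_2)=\Ord(P_2)$, Proposition \ref{P:normalized_Schur} gives a uniquely determined Schur operator $S\in\hat D_1^{sym}$ with $S_0=1,\ S_{-1}=0$ and $S^{-1}P_1S=\partial^q$, and then Lemma \ref{L:normalisation} produces, after a further uniquely determined conjugation by an operator in $C(\partial^q)$, the normalised normal form $B'$ with its distinguished coordinates $p_{l,j}$. These coordinates, together with the remaining generators' coefficients constrained by $f_i(P_1',\ldots,P_m')=0$ and the pairwise commutation relations, determine a closed point of $X_{[\co_C(C_0)]}$ (Definition \ref{D:normalised_normal_form}). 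This assignment is well defined because all the intermediate choices (normalisation in $B$, the Schur operator, the $C(\partial^q)$-conjugation) are canonical once $\pi,\hat\phi$ are fixed; and different choices of $\pi,\hat\phi$ are absorbed, as in the proof of Theorem \ref{T:classif2}, into the scale equivalence.

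Next I would construct the inverse map. Given a closed point of $X_{[\co_C(C_0)]}$, by Definition \ref{D:normalised_normal_form} it determines commuting monic operators $P_1'=\partial^q,P_2',\ldots,P_m'\in C(\partial^q)\subset\hat D_1^{sym}\hat\otimes\tilde K$ satisfying $f_i=0$, hence a commutative ring $B'\simeq\co_C(C_0)$. Embedding via $\hat\Phi$ into $E_q$ (Lemma \ref{L:embedding_Phi}) and applying Lemma \ref{L:conjugation in E_q} / Corollary \ref{C:Schur_subring} to $P_2'$, we get an operator $S\in K^{\oplus q}((\tilde D^{-1}))$ (Remark \ref{R:S_over_K}) with $A':=S^{-1}\hat\Phi(B')S\subset K((\tilde D^{-1}))$. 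By Lemma \ref{L:solution_of_BC_equation} and Remark \ref{R:Krichever_map}, $A'$ is a rank one subalgebra of $K((z))$ (writing $z=\tilde D^{-1}$) with $A'\cap K[[z]]=K$; one then runs the Schur-pair machinery: the module $W'':=F''\cdot S$ gives an embedded Schur pair $(A',W'')$ in the sense of Definition \ref{D:Schur_pair}, and Theorem \ref{T:classif_Schur_pairs} together with Theorem \ref{T:classif2} returns a triple $(C,p,\cf)$ with $\cf$ torsion free of rank one and vanishing cohomology. Here the key points are that $F''\simeq F$ as filtered modules (established in the text before Definition \ref{D:Q5.1}), which forces $\Sup W''=K[z^{-1}]$, and that the curve recovered is indeed $C$: the subring $A'$ is generated by elements of orders $\ord(w_i)$ satisfying exactly the defining equations $f_i$ of $\co_C(C_0)$, so by the last paragraph of Remark \ref{R:Krichever_map} it coincides, as an abstract filtered algebra, with the Laurent-expansion image of $\co_C(C_0)$, whence the spectral curve of the associated ring $B$ is $C$ with the same marked point $p$.

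Then I would check that the two maps are mutually inverse. In one composite, starting from $\cf$ we produce $B\subset D_1$, then $B'$, then a point of $X_{[\co_C(C_0)]}$, then $A'$ and the Schur pair $(A',W'')$; since $W''\simeq W$ as filtered modules and $A'\simeq A$ as filtered algebras (both via the chain of conjugations, all of which are admissible in the sense of Definition \ref{D:equiv_Schur_pair} after passing to $K((z))$), the Schur pair obtained is equivalent to the one attached to $\cf$ by the Krichever map, so Theorem \ref{T:classif_Schur_pairs} returns the same class $[B]$ and hence the same sheaf $\cf$ up to isomorphism. In the other composite, a point of $X_{[\co_C(C_0)]}$ yields $(C,p,\cf)$, and feeding $\cf$ back in reproduces the same normalised normal form because Lemma \ref{L:normalisation} guarantees uniqueness of the coordinates $p_{l,j}$ in the $C(\partial^q)$-conjugacy class, and the Schur operator from Proposition \ref{P:normalized_Schur} is unique; thus the coordinates are recovered verbatim. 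Injectivity on coordinates is exactly the second assertion of Lemma \ref{L:normalisation} (two operators with different coordinates are not $C(\partial^q)$-conjugate), which guarantees that distinct closed points give non-isomorphic sheaves.

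I expect the main obstacle to be the verification that the recovered spectral curve is genuinely $C$ and not merely a curve with isomorphic coordinate ring on the affine part — that is, controlling the behaviour at the point $p$ and the filtration. Concretely, one must show that the filtered $K$-algebra $A'\subset K((z))$ produced from a point of $X_{[\co_C(C_0)]}$ has its order filtration matching the pole-order filtration of $\co_C(C_0)$ at $p$; this is where the coprimality of $q=\Ord(P_1')$ with $\Ord(P_2')$ and the explicit normal-form shape in Lemma \ref{L:normalisation} are essential, since they force $a_1=z^{-q}$ after conjugation and then pin the remaining generators by Hensel's lemma (as in Remark \ref{R:Krichever_map}). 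Once this filtered-isomorphism statement is in place, the rest is a bookkeeping exercise chaining together Theorems \ref{T:classif2} and \ref{T:classif_Schur_pairs} with the uniqueness in Lemma \ref{L:normalisation}.
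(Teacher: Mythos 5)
Your proposal is correct and follows essentially the same route as the paper's proof: the forward map via Theorem \ref{T:classif2}, Proposition \ref{P:normalized_Schur} and Lemma \ref{L:normalisation}, the inverse map via $\hat{\Phi}$, Corollary \ref{C:Schur_subring} and the Schur-pair classification (Theorems \ref{T:classif_Schur_pairs}, \ref{T:classif2}), and mutual inverseness via the filtered isomorphisms $F\simeq F''\simeq W''$ together with the uniqueness of coordinates in Lemma \ref{L:normalisation}. The point you flag as the main obstacle (recovering the correct curve and filtration at $p$) is handled in the paper exactly as you suggest, through the invariance of the coordinates under scale transforms and the compatibility of the filtrations, so no essential step is missing.
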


\begin{proof}
Let $\cf$ be a  torsion free rank one sheaf on $C$ with vanishing cohomologies. 
By theorem \ref{T:classif2} the triple $(C,p,\cf )$ corresponds to uniquely defined {\it normalised}\footnote{Each equivalence class $[B]$ contains a {\it normalised representative}, which determines the whole class uniquely. For example, we can choose a monic differential operator of minimal positive order in $B$, and normalise it (conjugating by an appropriate function $f\in \hat{R}$).} commutative elliptic subring $B\subset D_1$ up to a scale transform. Besides, the generators $w_i$ corresponds to formally elliptic differential operators $P_1, \ldots ,P_m$ of the same orders, and there exists a scale transform that makes them monic. By  proposition \ref{P:normalized_Schur} and lemma \ref{L:normalisation} there exists a uniquely determined Schur operator $S\in \hat{D}_1^{sym}$ such that $B':=S^{-1}BS\in C(\partial^q)$ and $P_2'=S^{-1}P_2S$ is a normalised normal form of $P_2$ with respect to $P_1$, i.e. $B'$ is a normalised normal form w.r.t. $P_1', \ldots ,P_m'$ which have the same orders as $w_i$ or $P_i$. Note that the scale transform is compatible with conjugation by $S$ and that the coefficients of $\hat{\Phi}(P_i')$ are invariant under any scale transform of $P_i'$ for all $i$. Thus, $\cf$ determines a closed point of $X_{[\co_C(C_0)]}$.

Vice versa, any closed point of $X_{[\co_C(C_0)]}$ determines a normalised normal form $B'$ w.r.t. some $P_1', \ldots ,P_m'$ which have the same orders as $w_i$. By corollary \ref{C:Schur_subring} and remark \ref{R:S_over_K} there exists an operator $S\in E_q$ such that $A':=S^{-1}\hat{\Phi}(B')S\in K((\tilde{D}^{-1}))$. Note that $W'':=F''\cdot S$ has support equal to $F''$, because $S$ is a monic invertible operator of order zero. So, $(W'', A')$ form an embedded Schur pair of rank one\footnote{Since  $\hat{\Phi}(C(\partial^q))$  consists of Laurent polynomials whose coefficients at negative powers of $\tilde{D}$ are not constants (see remark \ref{R:vector_form}), and since $S$ is monic and $A'\subset K((\tilde{D}^{-1}))$, it follows that $A'\cap K[[\tilde{D}^{-1}]]=K$.}. By theorems \ref{T:classif_Schur_pairs} and \ref{T:classif2} this Schur pair determines a normalised commutative subring $B\subset D_1$ of rank one and a torsion free sheaf $\cf\simeq \Projj \tilde{W''}$ of rank one with vanishing cohomologies. 

As it was noticed above, the modules $F$, $F''$ and $W''$ are isomorphic as filtered modules, and all correspondences are compatible with the scale transform. By this reason the maps $p\in X_{[\co_C(C_0)]} \mapsto \cf$ and $\cf \mapsto p\in X_{[\co_C(C_0)]}$ are mutually inverse. 
\end{proof}

\begin{rem}
\label{R:moduli}
This result indicates that the moduli space of {\it spectral sheaves} of rank one, i.e. sheaves with vanishing cohomologies, is an affine open subscheme of the compactified Jacobian (cf. \cite{MM}, \cite{Souza}, \cite{Rego}). We hope to cover this issue, also in the higher rank case, in future works.
\end{rem}

\begin{ex}
\label{Ex:Wallenberg}
Let $L=\partial^2+u$, $P=4 \partial^3+6 u \partial+3 u^{\prime}$, where  $u(x)=\sum_{k=0}^{\infty} \frac{1}{k !} u_k \cdot x^k$, be  ODOs of orders 2 and 3. Then  $[L,P]=0$ iff $6u u' +u'''=0$ (see \cite{Wall}). It's easy to see that the coefficients $u_k$, $k\ge 3$ are uniquely determined by this equation for any choice of free parameters $u_0, u_1, u_2$.  The spectral curve 
of these operators is given by 
$$
P^2=16 L^3+4\left(-3 u_0^2-u_2\right) L-4 u_0^3+u_1^2-2 u_0 u_2,
$$
The normalised normal form of $P$ (written in G-form) with respect to $L$ is\footnote{This example was calculated by V.D. Busov in his master thesis at Lomonosov MSU}:
$$
P' =4 \partial^3 + 2u_0A_{1}\partial+u_1A_1+\frac{2u_0^2+u_2}{2}(-1+A_{1})\int .
$$
If we transfer $P'$ into the matrix form (see Lemma \ref{L:B is matrix algebra over C(B)}), we get
$$
\tilde{P}'=\psi\circ\hat{\Phi}(P')=\left(
\begin{array}{cc}
	u_1 & 2 \left(2 \tilde{D}^2+u_0\right) \\
	4 \tilde{D}^4-2 \tilde{D}^2 u_0-2 u_0^2-u_2 & -u_1 \\
\end{array}
\right)
$$
So, by fixing an equation of the spectral curve, we get one-dimensional affine algebraic set in $\da^3$ parametrising torsion free sheaves with vanishing cohomology groups.  
\end{ex}

\begin{ex}
	\label{Ex: Rank 1 normal form}
	On the other hand, if we have two operators $L_2,L_3\in D_1$ satisfying the equation $L_3^2=4L_2^3-g_2L_2-g_3$, where $g_2,g_3$ are two constant coefficients in $K$ (may assume $L_2$ is normalized), then apparently $[L_2,L_3]=0$ (cf. \cite{BC1}). 
	
	Consider the normal form $L_3'$ of $L_3$ with respect to $L_2$. We fix $k=2,\xi=-1$ in this example, according to Lemma \ref{L:homog_comm}, $L_3'$ should be in the form of (within $E_2$):
	$$
	\hat{\Phi}(L_3')=2\tilde{D}^3+(c_{1,0}+c_{1,1}A_1)\tilde{D}+(c_{0,0}+c_{0,1}A_1)+(c_{-1,0}+c_{-1,1}A_1)\tilde{D}^{-1}
	$$
	with $c_{-1,0}+c_{-1,1}=0$.
	
	The calculations of $L_3'$ can be  separated into the following parts:
	\begin{enumerate}
		\item Written $L_2$ into G-form with unknown coefficients $t_i$, notice it's totally free of $B_j$(because $L_2\in D_1$, and by theorem \ref{T: P in hat(D) is a dif_op}), then use lemma \ref{L:H tfree of Bj iff xH tfree of Bj}:
		$$
		\hat{\Phi}(L_2)=\tilde{D}^2+t_0+t_1\Gamma_1\tilde{D}^{-1} +(t_2\Gamma_1-t_2\Gamma_2)\tilde{D}^{-2}+\cdots
		$$
		and correspondingly $L_3$ with unknown coefficient $m_j$
		$$
		\hat{\Phi}(L_3)=2\tilde{D}^3+m_0\tilde{D}+m_1+m_2\Gamma_1+(m_3\Gamma_1+m_4\Gamma_2)\tilde{D}^{-1}+\cdots
		$$
		Substitute the expression of $(L_2,L_3)$ into equation $L_3^2=4L_2^3-g_2L_2-g_3$, we get the relation that\footnote{We can substitute $\hat{\Phi}(L_3)$ and $\tilde{D}^2$ instead, in  this case calculations are easy in both cases.} 
		$$
		\begin{cases}
			m_0=3 t_0;
			\\m_1=\frac{3 t_1}{2};m_2=3 t_1;
			\\m_3=0;m_4=\frac{3}{2} (g_2-3 t_0^2);
		\end{cases}
		$$
		with the extra relations with parameter $g_2,g_3$ that 
		$$
		\begin{cases}
			t_2=\frac{1}{2} (3 t_0^2-g_2)
			\\-2 g_2 t_0+4 g_3+2 t_0^3+t_1^2=0
		\end{cases}
		$$
		\item Calculate the Schur operator $S$ in the equation $L_2S=S\partial^2$.
		\item Calculate the normal form $L_3'$ in the equation $L_3S=SL_3'$ (these calculations were made with the help of Wolfram Mathematica package)
	\end{enumerate}
	we get 
	$$
	\hat{\Phi}(L_3')=2 \tilde{D}^3+ t_0 A_1\tilde{D} -\frac{1}{2} t_1 A_1-\frac{1}{4}(1-A_1) \left(g_2-t_0^2\right)\tilde{D}^{-1};
	$$
	
	Transferring $L_3'$ into matrix form, we have 
	$$
	\psi\circ \hat{\Phi}(L_3')=\left(
	\begin{array}{cc}
		-\frac{t_1}{2} & 2 \tilde{D}^2+t_0 \\
		\frac{1}{2} \left(4 \tilde{D}^4-2 \tilde{D}^2 t_0-g_2+t_0^2\right) & \frac{t_1}{2} \\
	\end{array}
	\right)
	$$
\end{ex}

\noindent J. Guo,  School of Mathematical Sciences, Peking University and Sino-Russian Mathematics Center,  Beijing, China 
\\ 
\noindent\ e-mail:
$123281697@qq.com$

\vspace{0.5cm}

\noindent A. Zheglov,  Lomonosov Moscow State  University, Faculty
of Mechanics and Mathematics, Department of differential geometry
and applications, Leninskie gory, GSP, Moscow, \nopagebreak 119899,
Russia
\\ \noindent e-mail
 $azheglov@math.msu.su$, $alexander.zheglov@math.msu.ru$, $abzv24@mail.ru$

\end{document}